\documentclass[reqno]{amsart}

\usepackage{color}
\usepackage[foot]{amsaddr}
\usepackage{booktabs}
\usepackage{bigstrut}
\usepackage{amsmath,amsthm,bm}
\usepackage{amsfonts}
\usepackage{algorithmic}
\usepackage{algorithm}
\usepackage{multirow}
\usepackage{overpic}
  \usepackage{paralist}
  \usepackage{graphics} 
  \usepackage{epsfig} 
\usepackage{graphicx}
\usepackage{epstopdf} 

\usepackage{float}

\usepackage{tikz}
\usetikzlibrary{calc}

\usepackage{url}

\newtheorem{theorem}{Theorem}[section]

\newtheorem{lemma}[theorem]{Lemma}
\newtheorem{proposition}[theorem]{Proposition}

\newtheorem{definition}[theorem]{Definition}
\theoremstyle{definition}
\newtheorem{remark}[theorem]{Remark}

\DeclareMathOperator*{\esssup}{ess\,sup}

\graphicspath{{images/}}

\begin{document}

\title[Hypergraph $p$-Laplacian regularization  on point clouds]
{Hypergraph $p$-Laplacian regularization on point clouds for data interpolation}

\author{Kehan Shi $^{\ast}$, Martin Burger $^{\dagger,\ddagger}$}
\address{$^{\ast}$Department of Mathematics, China Jiliang University,
Hangzhou 310018, China}
\address{$^{\dagger}$Computational Imaging Group and Helmholtz Imaging, Deutsches Elektronen-Synchrotron DESY, 22607 Hamburg, Germany}
\address{$^{\ddagger}$Fachbereich Mathematik, Universit\"{a}t Hamburg, 20146 Hamburg, Germany}


\subjclass{}
 \keywords{Hypergraph, $p$-Laplacian, continuum limit, stochastic primal-dual algorithm, data interpolation}


\begin{abstract}
  As a generalization of graphs, hypergraphs are widely used to model higher-order relations in data.
  This paper explores the benefit of the hypergraph structure for the interpolation of point cloud data that contain no explicit structural information.
  We define the $\varepsilon_n$-ball hypergraph and the $k_n$-nearest neighbor hypergraph on a point cloud and study the $p$-Laplacian regularization on the hypergraphs.
  We prove the variational consistency between the hypergraph $p$-Laplacian regularization and the continuum $p$-Laplacian regularization in a semisupervised setting when the number of points $n$ goes to infinity while the number of labeled points remains fixed.
  A key improvement compared to the graph case is that the results rely on weaker assumptions on the upper bound of $\varepsilon_n$ and $k_n$.
  To solve the convex but non-differentiable large-scale optimization
  problem, we utilize the stochastic primal-dual hybrid gradient algorithm.
  Numerical experiments on data interpolation verify that the hypergraph $p$-Laplacian regularization outperforms the graph $p$-Laplacian regularization in preventing the development of spikes at the labeled points.
\end{abstract}

\maketitle

\section{Introduction}
A hypergraph $H$ is a triplet with the form $H=(V, E, W)$, where $V=\{x_i\}_{i=1}^n$ is the vertex set, $E=\{e_k\}_{k=1}^m$ denotes the set of hyperedges, and $W=\{w_k\}_{k=1}^m$ assigns weight $w_k>0$ for hyperedge $e_k$.
It is a generalization of a graph in the sense that each hyperedge $e_k\subset V$ contains an arbitrary number of vertices.
Due to the applicability of modeling higher-order relations in data, hypergraphs have been extensively used in image processing \cite{zhang2020hypergraph,zeng2023multi}, bioinformatics \cite{klamt2009hypergraphs,patro2013predicting},  social networks \cite{antelmi2021social,fazeny2023hypergraph}, etc.



This paper focuses on the $p$-Laplacian regularization on hypergraph $H$,
\begin{equation}\label{eq:1.1}
  \sum_{k=1}^m w_k \max_{x_i,x_j\in e_k}|u(x_i)-u(x_j)|^p,\quad p> 1,
\end{equation}
which was first proposed in \cite{hein2013total} for clustering and semi-supervised learning (SSL).
The authors defined the hypergraph total variation (i.e., energy \eqref{eq:1.1} with $p=1$) in analogy to the graph case \cite{bach2013learning}, i.e., it is the Lov\'{a}sz extension of the hypergraph cut. Then a power $p>1$ was introduced to avoid spiky solutions in SSL.
Despite the successful applications of energy \eqref{eq:1.1} and its variants \cite{zhang2017re,li2018submodular,yadati2019hypergcn},
some of its mathematical properties are not clear.
One of the fundamental questions is whether it is a discrete approximation of
a continuum energy related to the $p$-Laplacian regularization
\begin{equation*}
  \int_{\Omega}|\nabla u|^pdx,\quad p>1.
\end{equation*}
Since the latter is well-studied, a positive answer shall provide us with more insight into the energy \eqref{eq:1.1}.

The difficulty comes from the complicated structure of the hypergraph.
In hypergraph-based learning, the first step is always to construct a hypergraph from the given data.
This is neither trivial nor unique \cite{gao2020hypergraph}.
Typically, we construct the hyperedge using structural information, such as the attribute \cite{huang2015learning} or the network \cite{fang2014topic}, that is explicitly contained in the data. This means that the formulation of energy \eqref{eq:1.1} depends on the data, thus preventing us from establishing general results for it.

In this paper, we define the $\varepsilon_n$-ball hypergraph $H_{n,\varepsilon_n}$ from a given point cloud $\Omega_n=\{x_i\}_{i=1}^n$ and investigate the hypergraph $p$-Laplacian regularization on it in a semisupervised setting.
It is assumed that the first $N$ points $\mathcal{O}:=\{x_i\}_{i=1}^N$ are labeled with labels $\{y_i\}_{i=1}^N\subset\mathbb{R}$ and the remaining $n-N$ points are unlabeled and drawn from a probability measure $\mu$ supported in a bounded set $\Omega\subset\mathbb{R}^d$.
The given point cloud $\Omega_n$ forms the vertices of hypergraph $H_{n,\varepsilon_n}$ and
each hyperedge is a subset of $\Omega_n$ consisting of a vertex and its $\varepsilon_n$-ball neighbors, i.e.,
\begin{equation*}
  V=\Omega_n,\quad E=\{e_k:=B(x_k,\varepsilon_n), x_k\in \Omega_n\}_{k=1}^n, ~~\varepsilon_n>0.
\end{equation*}
The homogeneous weight $W=\{w_k:= 1\}_{k=1}^n$ is assigned.
Then semi-supervised learning with $p$-Laplacian regularization \eqref{eq:1.1} on the hypergraph $H_{n,\varepsilon_n}$ is formulated as the problem of finding an estimator $u:\Omega_n\rightarrow\mathbb{R}$ by minimizing
\begin{equation}\label{eq:1.2}
  \mathcal{E}_{n,\varepsilon_n}(u)=\frac{1}{n\varepsilon_n^{p}}\sum_{k=1}^{n}\max_{x_i,x_j\in B(x_k,\varepsilon_n)}|u(x_i)-u(x_j)|^p,\quad p>1,
\end{equation}
under constraint $u(x_i)=y_i$ for $x_i\in\mathcal{O}$.
Here a scaling parameter $\frac{1}{n\varepsilon_n^{p}}$ is introduced to ensure that the energy is well-defined when $n\rightarrow\infty$.
The method makes use of the smoothness assumption.
Namely, vertices tend to share a label if they are  close to each other such that they are in the same hyperedge.






We are interested in the asymptotic behavior of $\mathcal{E}_{n,\varepsilon_n}$ when the number of data points $n$ goes to infinity while the number of labeled points $N$ remains fixed.
This corresponds to the learning problem that assigns labels to a large number of unlabeled data through a given few labeled data.
It has tremendous practical value in applications where
acquiring labeled data is expensive and time-consuming, while acquiring unlabeled data is relatively easy.

The continuum limit of $\mathcal{E}_{n,\varepsilon_n}$ as $n\rightarrow\infty$ reads
\begin{align}\label{eq:1.3}
  \mathcal{E}(u)=\mathcal{E}(u;\rho)=
  \begin{cases}
    2^p\int_{\Omega}|\nabla u|^p \rho dx, & \mbox{if } u\in W^{1,p}(\Omega), \\
    +\infty, & \mbox{otherwise},
  \end{cases}
\end{align}
where $\rho$ is the density of the probability measure $\mu$ with respect to the Lebesgue measure.
To establish the connection between the discrete energy $\mathcal{E}_{n,\varepsilon_n}$ and the continuum energy $\mathcal{E}$, we utilize the
method developed in \cite{garcia2016continuum,slepcev2019analysis}, where continuum limits of variational models on graphs were studied by tools of optimal transportation and $TL^p$ topology.
More precisely, we prove that $\mathcal{E}_{n,\varepsilon_n}$ $\Gamma$-converges to $\mathcal{E}$ in the $TL^p$ topology as $n\rightarrow\infty$ if the connection radius $\varepsilon_n$ satisfies
\begin{equation}\label{eq:1.4}
  \delta_n\ll\varepsilon_n\ll 1,
\end{equation}
where
  \begin{align}\label{delta}
    \delta_n=
      \begin{cases}
        \sqrt{\frac{\ln\ln(n)}{n}}, & \mbox{if } d=1, \\
        \frac{(\ln n)^{3/4}}{\sqrt{n}}, & \mbox{if } d=2, \\
        \frac{(\ln n)^{1/d}}{n^{1/d}}, & \mbox{if  } d\geq 3,
      \end{cases}
    \end{align}
is a constant depending on $n$ and $d$.

In the case of the constrained energy
\begin{align}\label{eq:1.5}
  \mathcal{E}^{con}_{n,\varepsilon_n}(u)=
  \begin{cases}
    \mathcal{E}_{n,\varepsilon_n}(u), & \mbox{if } u(x_i)=y_i \mbox{ for } x_i\in\mathcal{O}, \\
    +\infty, & \mbox{otherwise},
  \end{cases}
\end{align}
where the training set $\{(x_i,y_i)\}_{i=1}^N$ is taken into account, the corresponding continuum limit becomes
\begin{align}\label{eq:1.6}
  \mathcal{E}^{con}(u)=\mathcal{E}^{con}(u;\rho)=
  \begin{cases}
    \mathcal{E}(u), & \mbox{if } u\in W^{1,p}(\Omega) \mbox{ and } u(x_i)=y_i \mbox{ for } x_i\in\mathcal{O}, \\
    +\infty, & \mbox{otherwise}.
  \end{cases}
\end{align}
If $p>d$, Sobolev's embedding theorem \cite{adams2003sobolev} implies that the minimizer of $\mathcal{E}^{con}$ is H\"older continuous and the constraint is well-defined.
Under assumptions \eqref{eq:1.4} and $p>d$, we also have that
$\mathcal{E}^{con}_{n,\varepsilon_n}$ $\Gamma$-converges to $\mathcal{E}^{con}$ in the $TL^p$ topology as $n\rightarrow\infty$.
As a consequence,
the minimizer of $\mathcal{E}^{con}_{n,\varepsilon_n}$ converges to the minimizer of $\mathcal{E}^{con}$ in the $TL^p$ topology as $n\rightarrow\infty$.
In other words,
$\mathcal{E}^{con}_{n,\varepsilon_n}$ is a discrete approximation of the continuum functional $\mathcal{E}^{con}$ on the point cloud.
If $p\leq d$,
$\mathcal{E}^{con}=\mathcal{E}$ and
the minimizer of $\mathcal{E}^{con}_{n,\varepsilon_n}$ converges to a minimizer of $\mathcal{E}$ instead, which is a constant. This means that
$\mathcal{E}^{con}_{n,\varepsilon_n}$ is degenerate and develops spiky solutions for large $n$.

A similar result holds for the constrained $p$-Laplacian regularization on the random geometric graph under an additional assumption on the upper bound of the connection radius $\varepsilon_n$, i.e.,
\begin{equation*}
  \delta_n\ll\varepsilon_n\ll \left(\frac{1}{n}\right)^{\frac{1}{p}}.
\end{equation*}
See \cite{slepcev2019analysis} and Section \ref{se:1.1} for more details. This reveals that the hypergraph structure is beneficial in a semisupervised setting even for point cloud data that contain no explicit structural information.

The cardinality for the hyperedge of the $\varepsilon_n$-ball hypergraph generally varies significantly.
This causes high computational costs in applications.
A more feasible alternative is to consider the $p$-Laplacian regularization on the $k_n$-nearest neighbor ($k_n$-NN) hypergraph $H_{n,k_n}$, which reads
\begin{equation}\label{eq:1.7}
  \mathcal{F}_{n,k_n}(u)=\frac{1}{n\bar{\varepsilon}_n^{p}}\sum_{k=1}^{n}
  \max_{x_i,x_j\stackrel{k_n}{\sim} x_k}|u(x_i)-u(x_j)|^p,\quad p>1.
\end{equation}
Here
\begin{equation}\label{eq:1.8}
  \bar{\varepsilon}_n=\left(\frac{1}{\alpha_d}\frac{k_n}{n}\right)^{1/d},
\end{equation}
$\alpha_d$ denotes the volume of the $d$-dimensional unit ball,
and  $x_i\stackrel{k_n}{\sim} x_k$ denotes that vertex $x_i$ is among the $k_n$ nearest points to vertex $x_k$.

The motivation for the definition of $\bar{\varepsilon}_n$ is as follows.
Let $\mu_n=\frac{1}{n}\sum_{i=1}^n\delta_{x_i}$ be the empirical measure of $\Omega_n$.
If $\rho\equiv 1$ and $n$ is large, then for any $x_k\in\Omega_n$ and a constant connection radius $\varepsilon_n>0$,
\begin{equation*}
  \mu_n(B(x_k,\varepsilon_n))\approx \mu(B(x_k,\varepsilon_n))=\alpha_d\varepsilon_n^d.
\end{equation*}
This is what we are doing on the $\varepsilon_n$-ball hypergraph,
while on the $k_n$-NN hypergraph, the connection radius $\varepsilon_n$ is now a function of $x_k$ such that
\begin{equation*}
  \mu_n(B(x_k,\varepsilon_n(x_k)))=\frac{k_n}{n}.
\end{equation*}
For sufficiently large $n$,
\begin{align*}
   \mu(B(x_{k_1},\varepsilon_n(x_{k_1})))
   &\approx \mu_n(B(x_{k_1},\varepsilon_n(x_{k_1})))=\frac{k_n}{n}\\
  &=\mu_n(B(x_{k_2},\varepsilon_n(x_{k_2}))) \approx \mu(B(x_{k_2},\varepsilon_n(x_{k_2}))),
\end{align*}
which means that $\varepsilon_n(x_k)$ is approximately a constant function (denoted by $\bar{\varepsilon}_n$).
Then we have
\begin{equation*}
  \alpha_d\bar{\varepsilon}_n^d=\mu(B(x_k,\bar{\varepsilon}_n))\approx\mu_n(B(x_k,\bar{\varepsilon}_n))\approx\mu_n(B(x_k,\varepsilon_n(x_k)))=\frac{k_n}{n},
\end{equation*}
from which we obtain \eqref{eq:1.8}.

Under the assumption
\begin{equation*}
  \delta_n\ll\bar{\varepsilon}_n\ll 1,
\end{equation*}
all theoretical results regarding the $\varepsilon_n$-ball hypergraph $p$-Laplacian regularization $\mathcal{E}_{n,\varepsilon_n}$ and $\mathcal{E}^{con}_{n,\varepsilon_n}$ also hold for $\mathcal{F}_{n,k_n}$
and $\mathcal{F}^{con}_{n,k_n}$, where
\begin{align}\label{eq:1.9}
  \mathcal{F}^{con}_{n,k_n}(u)=
  \begin{cases}
    \mathcal{F}_{n,k_n}(u), & \mbox{if } u(x_i)=y_i \mbox{ for } x_i\in\mathcal{O}, \\
    +\infty, & \mbox{otherwise}.
  \end{cases}
\end{align}
The only difference is that the continuum limits possess a different weight. Namely, $\mathcal{F}_{n,k_n}$ and $\mathcal{F}^{con}_{n,k_n}$ ($p>d$) $\Gamma$-converge to $\mathcal{E}(~\cdot~; \rho^{1-p/d})$ and $\mathcal{E}^{con}(~\cdot~; \rho^{1-p/d})$ as $n\rightarrow\infty$ respectively.

Due to the maximum function, energy \eqref{eq:1.5} and energy \eqref{eq:1.9} are convex but non-differentiable for any $p\geq 1$.
 The primal-dual hybrid gradient (PDHG) algorithm \cite{chambolle2011first} was used in \cite{hein2013total} when $p=1, 2$.
Considering that the number of hyperedges $n$ is usually large,
the recent stochastic PDHG algorithm \cite{chambolle2018stochastic} provides us with a more efficient scheme, in which it updates a random subset of the separable dual variable in each iteration.
The resulting algorithm updates the primal variable by projecting onto the training set, and updates the dual variable by solving the proximal operator of $\|\cdot\|_1^{p/{p-1}} (p>1)$. The latter has no closed-form solution but can be solved exactly in a few iterations.
In the case $p=1$, the subproblem involving the dual variable becomes a projection onto the $L^1$ ball.

The theoretical results are partially verified by numerical experiments.
By comparing the hypergraph $p$-Laplacian regularization and the graph $p$-Laplacian regularization for data interpolation in 1D, we observe that the hypergraph structure can better suppress spiky solutions.
Experiments on higher-dimensional data interpolation, including SSL and image inpainting, are also presented to obtain the same conclusion.

At last, we mention that
energy \eqref{eq:1.2} and energy \eqref{eq:1.7} consist of $n$ terms, each of which is nothing but a $p$-th power of the objective function of the Lipschitz learning \cite{von2004distance,kyng2015algorithms} on a hyperedge.
The Lipschitz learning (see \eqref{eq:01.3} for its definition) has been proven to be effective in suppressing spikes \cite{calder2019consistency,roith2023continuum}.
This informally explains the numerical results that the hypergraph $p$-Laplacian regularization suppresses spikes better than the graph one.

This paper is organized as follows. We complete the introduction with a brief review of the graph $p$-Laplacian regularization. In Section 2, we present definitions of the $\varepsilon_n$-ball hypergraph and the $k_n$-NN hypergraph, and present mathematical tools needed for the following study.
The continuum limits of the $p$-Laplacian regularization on the $\varepsilon_n$-ball hypergraph and the $k_n$-NN hypergraph in a semisupervised setting are studied in Section 3 and Section 4, respectively.
Section 5 is devoted to the numerical algorithm, which is based on the stochastic primal-dual hybrid gradient method.
Numerical experiments are presented in Section 6.
We conclude this paper in Section 7.

\subsection{Related works on the graph $p$-Laplacian regularization}\label{se:1.1}
The graph-based method has been widely used for SSL. It constructs a weighted graph $G_n$ with vertex set $\Omega_n=\{x_i\}_{i=1}^n$ to represent the geometric structure in $\Omega_n$.
We follow the previous notation and assume that the first $N$ vertices $\mathcal{O}=\{x_i\}_{i=1}^N$ are labeled with labels $\{y_i\}_{i=1}^N$.
It is commonly assumed in the theoretical study that $G_n$ is a random geometric graph. Namely, the unlabeled vertices $\{x_i\}_{i=N+1}^n$ are drawn from a probability measure $\mu$ and an edge $e_{i,j}$ between two vertices $x_i$ and $x_j$ exists if and only if their distance is smaller than a given connection radius $\varepsilon_n$. The weight $w_{i,j}>0$ for edge $e_{i,j}$ is a nonincreasing function of the distance.
Then an estimator is obtained by minimizing an objective functional defined on $G_n$ under the constraint on the labeled subset.


The graph Laplacian regularization
\begin{equation*}
  \sum_{i,j=1}^nw_{i,j}|u(x_i)-u(x_j)|^2,\quad \textrm{subject to } u(x_i)=y_i \textrm{ for } x_i\in\mathcal{O},
\end{equation*}
 was first proposed in \cite{zhu2003semi} for SSL and is one of the most well-known approaches.
It has attracted a lot of attention \cite{zhou2005learning,ando2006learning} and has applications in clustering \cite{von2007tutorial}, manifold ranking  \cite{yang2013saliency}, image processing \cite{gilboa2007nonlocal}, etc.
Still, one of its drawbacks can not be neglected.
It is degenerate in the sense that the minimizer becomes noninformative and develops spikes at the labeled points $x_i\in\mathcal{O}$ when the labeling rate $\frac{|\mathcal{O}|}{|\Omega_n|}=\frac{N}{n}$ is low \cite{nadler2009statistical}.
Variants have been proposed as a remedy:
In \cite{shi2017weighted,calder2020properly}, the authors proposed to assign more weights for edges adjacent to the labeled points.
Instead of imposing the labels as boundary conditions, the authors of \cite{calder2020poisson} constructed a source term with the labels and proposed the Poisson learning.

In \cite{el2016asymptotic}, the authors suggested to use the $p$-Laplacian regularization \cite{zhou2005regularization,elmoataz2015p} with $p>d$, which has the following form
\begin{equation}\label{eq:GpL}
  E_{n,\varepsilon_n}^{con}(u)=
  \begin{cases}
  \frac{1}{n^2\varepsilon_n^p}\sum_{i,j=1}^nw_{i,j}|u(x_i)-u(x_j)|^p, \quad &\mbox{if } u(x_i)=y_i \mbox{ for }x_i\in\mathcal{O},\\
  +\infty,\quad &\mbox{otherwise.}
  \end{cases}
\end{equation}
A rigorous study \cite{slepcev2019analysis} indicates that if
$\delta_n\ll\varepsilon_n\ll \left(\frac{1}{n}\right)^{\frac{1}{p}}$,
energy $E_{n,\varepsilon_n}^{con}$ is variational consistent in the $TL^p$ topology with the continuum energy
\begin{align*}
  E^{con}(u)=
  \begin{cases}
    \sigma_p\int_\Omega|\nabla u(x)|^p\rho^2(x)dx, &~ \textrm{if } u\in W^{1,p}(\Omega) \mbox{ and } u(x_i)=y_i \textrm{ for } x_i\in\mathcal{O},\\
    +\infty, &~ \textrm{otherwise},
  \end{cases}
\end{align*}
where $\sigma_p>0$ is a constant.
Furthermore, the minimizer of $E_{n,\varepsilon_n}^{con}$ converges to the minimizer of $E^{con}$ in the $TL^p$ topology
as $n\rightarrow \infty$.
The assumption $\delta_n\ll\varepsilon_n\ll \left(\frac{1}{n}\right)^{\frac{1}{p}}$ and the definition of $\delta_n$ (see \eqref{delta}) imply that $p>d$.
Consequently,
 by Sobolev's embedding theorem, the minimizer of $E^{con}$ is H\"{o}lder continuous and the constraint in $E^{con}$ is well-defined.
If $p\leq d$ or $n\varepsilon_n^p\rightarrow \infty$, the continuum limiting energy of $E_{n,\varepsilon_n}^{con}$ is similar to $E^{con}$ but without the constraint on the labeled subset, i.e.,
\begin{align*}
  E(u)=
  \begin{cases}
    \sigma_p\int_\Omega|\nabla u(x)|^p\rho^2(x)dx, &~ \textrm{if } u\in W^{1,p}(\Omega),\\
    +\infty, &~ \textrm{otherwise}.
  \end{cases}
\end{align*}
This indicates that $E_{n,\varepsilon_n}^{con}$ forgets the labels for large $n$ and explains the occurrence of spikes in the graph-based SSL.

The minimizer of energy $E_{n,\varepsilon_n}^{con}$ gains more smoothness as $p$ increases.
Recently, some authors studied the case $p\rightarrow+\infty$, which leads to the Lipschitz learning for SSL \cite{von2004distance,kyng2015algorithms},
\begin{equation}\label{eq:01.3}
  \frac{1}{\varepsilon_n}\max_{x_i,x_j\in\Omega_n}w_{i,j}|u(x_i)-u(x_j)|,\quad \textrm{subject to } u(x_i)=y_i \textrm{ for } x_i\in\mathcal{O}.
\end{equation}
As expected, the minimizer of the continuum limiting energy
\begin{align*}
  E_{\infty}(u)=
  \begin{cases}
    \sigma_\infty\esssup_{x\in\Omega}|\nabla u(x)|, &~~ \textrm{if } u\in W^{1,\infty}(\Omega)  \mbox{ and } u(x_i)=y_i \textrm{ for } x_i\in\mathcal{O},\\
    \infty, &~~ \textrm{otherwise},
  \end{cases}
\end{align*}
attains the labels continuously \cite{calder2019consistency,roith2023continuum}.
However, \eqref{eq:01.3} is less attractive for SSL since it forgets the distribution of the unlabeled points as $n\rightarrow\infty$ \cite{el2016asymptotic}.
For the related $\infty$-Laplacian equation, the author of \cite{calder2019consistency} proved that the self-turning weights allow to remember the distribution.

Finally, we mention that the $k_n$-nearest neighbor ($k_n$-NN) graph has a more practical value than the random geometric graph due to its sparsity.
In \cite{garcia2019variational}, the author studied the continuum limit of the total variation on the $k_n$-NN graph for clustering.
The method is also valid for the graph $p$-Laplacian regularization with $p>1$ on the $k_n$-NN graph.
The basic idea is to replace the connection radius $\varepsilon_n$ in $E_{n,\varepsilon_n}^{con}$ by $\bar{\varepsilon}_n$ defined in \eqref{eq:1.8}.

\section{Preliminaries}

\subsection{Settings}
Let
  $$\Omega_n=\{x_1, \cdots,x_N,x_{N+1},\cdots, x_n\}$$
be a point cloud in the bounded domain $\Omega\subset\mathbb{R}^d$ ($d\geq 1$).
We are given a training set $\{(x_i, y_i), i=1,\cdots,N\}$ with $N$ labeled points $\mathcal{O}=\{x_i\}_{i=1}^N$, where each $y_i\in \{1,\cdots,L\}$ for an integer $L\geq 2$ is the label of point $x_i\in\mathcal{O}$. The task of semi-supervised learning (SSL) is to assign labels from the label set $\{1,\cdots,L\}$ to the remaining points $\{x_{N+1},\cdots,x_n\}$.

In this paper, we let the number of the labeled points $N$ be fixed and study the asymptotic behavior of variational models defined on $\Omega_n$ when $n$ goes to infinity.
It is assumed that the unlabeled points $\{x_i\}_{i=N+1}^n$ are  independently and identically distributed (i.i.d.) random samples of a probability measure $\mu$ on $\Omega$.
The density $\rho$ of $\mu$ (with respective to the Lebesgue measure) is a continuous function with positive lower and upper bounds, i.e.,
\begin{equation*}
  0< \inf_{x\in\Omega} \rho(x)\leq \sup_{x\in\Omega}\rho(x)<\infty.
\end{equation*}
Throughout this paper, $\Omega$ is a connected and bounded domain with Lipschitz boundary $\partial\Omega$.

\subsection{Hypergraphs}
We consider regularizations on hypergraphs in a semisupervised setting. The first step is to generate a hypergraph from the given data set $\Omega_n$.
Clearly, it is not unique and affects the performance for SSL.
A brief review of the generation of hypergraphs can be found in \cite{gao2020hypergraph}.
Since the data set $\Omega_n$ contains no explicit structural information, we  construct the hypergraph by the distance-based method. More precisely, each hyperedge is a subset consisting of a vertex and its neighbors.
The following two hypergraphs called the $\varepsilon_n$-ball hypergraph and the $k_n$-NN hypergraph are considered in this paper.
Such hypergraphs have been used for image classification, image segmentation, and recommender systems \cite{jin2019hypergraph,bie2025hyperg,wan2024recommendation}.

\subsubsection*{The $\varepsilon_n$-ball hypergraph}
Let $\varepsilon_n$ be a constant representing the connection radius. The $\varepsilon_n$-ball hypergraph is defined as
\begin{equation*}
  H_{n,\varepsilon_n}=(V_n, E_{n,\varepsilon_n}, W_n),
\end{equation*}
with vertices $V_n=\Omega_n=\{x_k\}_{k=1}^n$, hyperedges $E_{n,\varepsilon_n}=\{e_k^{(\varepsilon_n)}\}_{k=1}^n$, and weights $W_n=\{w_k\}_{k=1}^n$,
where
\begin{equation*}
  e_k^{(\varepsilon_n)}=\{x_j\in\Omega_n: |x_k-x_j|\leq \varepsilon_n\}
\end{equation*}
is the hyperedge corresponding to the centroid $x_k$.
In this definition, we have no preference for different hyperedges. Consequently, we let $w_k=1$ for any $k=1,2\cdots,n$.
By considering $H_{n,\varepsilon_n}$ in the $p$-Laplacian regularization \eqref{eq:1.1}, we obtain $\mathcal{E}_{n,\varepsilon_n}$ (defined in \eqref{eq:1.2}) with an additional scaling parameter.

In the following,
we assume that $\varepsilon_n\gg\delta_n$. Then, with probability one, the $\varepsilon_n$-ball graph is connected \cite{penrose2003random,garcia2016continuum}. Namely, for any $x_i,x_j\in\Omega_n$, there exist vertices $x_{k_1}=x_i, x_{k_2},\cdots, x_{k_{s-1}},
x_{k_s}=x_j$, such that $|x_{k_l}-x_{k_{l-1}}|\leq \varepsilon_n$ for any $2\leq l\leq s$.
This indicates that hyperedge $e_k^{(\varepsilon_n)}$ has cardinality $|e_k^{(\varepsilon_n)}|\geq 2$ for any $k=1,2\cdots,n$ and hypergraph $H_{n,\varepsilon_n}$ is also connected.

\subsubsection*{The $k_n$-NN hypergraph}
The $k_n$-nearest neighbor ($k_n$-NN) hypergraph is more frequently used in practice.
In contrast to the $\varepsilon_n$-ball hypergraph, the $k_n$-NN hypergraph selects a fixed $k_n\geq 2$ neighbors for each vertex to form a hyperedge.
More precisely, it is defined as
\begin{equation*}
  H_{n,k_n}=(V_n, E_{n,k_n}, W_n),
\end{equation*}
with a different hyperedge set $E_{n,k_n}=\{e_k^{(k_n)}\}_{k=1}^n$,
where
\begin{equation*}
  e_k^{(k_n)}=\{x_j\in\Omega_n: x_j\stackrel{k_n}{\sim} x_k \},
\end{equation*}
and $x_j\stackrel{k_n}{\sim} x_k$ denotes that $x_j$ is among the $k_n$-nearest neighbors of $x_k$.
Hence $H_{n,k_n}$ is a $k_n$ uniform hypergraph.

Again, if $\bar{\varepsilon}_n\gg \delta_n$, the $k_n$-NN graph is connected with probability one \cite{garcia2019variational}. Consequently, $|e_k^{(k_n)}|\geq 2$ for any $k=1,2,\cdots,n$ and hypergraph $H_{n,k_n}$ is also connected.

\subsection{Mathematical tools}
In this subsection, we present mathematical tools that will be needed for our study.
We follow the idea of \cite{slepcev2019analysis} and consider the convergence of minimizers of discrete functionals defined on hypergraphs to minimizers of continuum functionals.
It relies on properties of $\Gamma$-convergence \cite{dal2012introduction,braides2002gamma}, the optimal transportation from the probability measure $\mu$ to the associated empirical measure $\mu_n$ \cite{trillos2015rate}, and the $TL^p$ space \cite{garcia2016continuum}.

\begin{definition}
Let $X$ be a metric space and $F_n, F: X\rightarrow [-\infty,\infty]$ be functionals. We say that $F_n$ $\Gamma$-converges to $F$ as $n\rightarrow\infty$, denoted by $F_n\stackrel{\Gamma}{\longrightarrow }F$,
if for every $x\in X$,
\begin{itemize}
  \item Liminf inequality: For every sequence $\{x_n\}_{n\in\mathbb{N}}\subset X$ converging to $x$,
      \begin{equation*}
        \liminf_{n\rightarrow\infty} F_n(x_n)\geq F(x).
      \end{equation*}
  \item Limsup inequality: There exists a sequence $\{x_n\}_{n\in\mathbb{N}}\subset X$ converging to $x$ such that
      \begin{equation*}
        \limsup_{n\rightarrow\infty} F_n(x_n)\leq F(x).
      \end{equation*}
\end{itemize}

\end{definition}

The $\Gamma$-convergence implies the  convergence of minimizers, see \cite[Theorem 1.21]{braides2002gamma} for the proof.

\begin{proposition}\label{pr:2.2}
  Let $X$ be a metric space, $F_n, F: X\rightarrow [0,\infty]$ be functionals, and
  $F_n\stackrel{\Gamma}{\longrightarrow }F\not\equiv\infty$ as $n\rightarrow\infty$.
  If there exists a precompact sequence $\{x_n\}_{n\in\mathbb{N}}$ such that
  \begin{equation*}
    \lim_{n\rightarrow\infty}\left(F_n(x_n)-\inf_{x\in X}F_n(x)\right)=0,
  \end{equation*}
  then
  \begin{equation*}
    \lim_{n\rightarrow\infty}\inf_{x\in X}F_n(x)=\inf_{x\in X}F(x),
  \end{equation*}
  and any cluster point of $\{x_n\}_{n\in\mathbb{N}}$ is a minimizer of $F$.
\end{proposition}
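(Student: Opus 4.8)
The plan is to run the standard ``fundamental theorem of $\Gamma$-convergence'' argument, using the precompactness hypothesis to upgrade a one-sided estimate into genuine convergence of the infima. Write $m_n=\inf_{x\in X}F_n(x)$ and $m=\inf_{x\in X}F(x)$, and note $0\le m<\infty$ since $F\ge 0$ and $F\not\equiv\infty$. First I would prove $\limsup_{n\to\infty}m_n\le m$: for an arbitrary $x\in X$ the limsup inequality furnishes a recovery sequence $z_n\to x$ with $\limsup_n F_n(z_n)\le F(x)$, and since $m_n\le F_n(z_n)$ this gives $\limsup_n m_n\le F(x)$; taking the infimum over $x$ yields the claim. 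This half uses nothing about $\{x_n\}$.

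Next I would prove the matching lower bound $\liminf_{n\to\infty}m_n\ge m$. Pick a subsequence $(n_j)$ along which $m_{n_j}\to\liminf_n m_n$; by precompactness of $\{x_n\}$, pass to a further subsequence (not relabeled) so that $x_{n_j}\to x^*$ for some $x^*\in X$. The almost-minimality hypothesis $F_n(x_n)-m_n\to 0$ then gives $F_{n_j}(x_{n_j})=(F_{n_j}(x_{n_j})-m_{n_j})+m_{n_j}\to\liminf_n m_n$. Applying the liminf inequality of $\Gamma$-convergence to $x_{n_j}\to x^*$ yields $F(x^*)\le\liminf_j F_{n_j}(x_{n_j})=\liminf_n m_n$, and since $m\le F(x^*)$ we are done. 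Here one small technical point must be handled: the liminf inequality in the definition is stated for full sequences, so I would first record that it passes to subsequences --- for instance by extending $(x_{n_j})$ to a sequence indexed by all $n$ that still converges to $x^*$ (filling the missing indices with the constant $x^*$) and observing that the $\liminf$ over the full sequence is no larger than the $\liminf$ over the subsequence. Combining the two bounds gives $\lim_n m_n=m$, which is the first assertion.

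Finally, for the statement about cluster points, let $x^*$ be any cluster point of $\{x_n\}$, so $x_{n_k}\to x^*$ along some subsequence; such a point exists because a precompact sequence in a metric space has a convergent subsequence, so the assertion is not vacuous. As before, $F_{n_k}(x_{n_k})=(F_{n_k}(x_{n_k})-m_{n_k})+m_{n_k}\to m$ using the hypothesis and the already-established convergence $m_n\to m$, and the subsequential liminf inequality gives $F(x^*)\le\liminf_k F_{n_k}(x_{n_k})=m=\inf_X F$; since also $F(x^*)\ge\inf_X F$, this forces $F(x^*)=\inf_X F$, i.e.\ $x^*$ is a minimizer of $F$.

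I do not expect any real obstacle inside this proposition itself: it is a textbook consequence of $\Gamma$-convergence together with compactness, the only delicate bookkeeping being the passage of the liminf inequality to subsequences and the chain of extracted subsequences. The genuinely hard work in the surrounding program lies elsewhere --- namely in verifying the hypotheses of this proposition for the concrete energies: the $\Gamma$-convergence $\mathcal{E}^{con}_{n,\varepsilon_n}\to\mathcal{E}^{con}$ (and likewise $\mathcal{F}^{con}_{n,k_n}\to\mathcal{E}^{con}(\,\cdot\,;\rho^{1-p/d})$) in the $TL^p$ topology under the scaling $\delta_n\ll\varepsilon_n\ll 1$, and the construction of a precompact almost-minimizing sequence in that topology.
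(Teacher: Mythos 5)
Your argument is correct and complete; it is the standard proof of the fundamental theorem of $\Gamma$-convergence, including the right handling of the subsequential liminf inequality. Note that the paper itself does not prove this proposition but cites it directly from Braides (Theorem 1.21 of \emph{Gamma-convergence for Beginners}), so there is no in-paper proof to diverge from --- your write-up simply supplies the textbook argument being invoked.
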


Let $\mu_n=\frac{1}{n}\sum_{i=1}^n\delta_{x_i}$ be the empirical measure of $\Omega_n$. We call $T_n$ a transportation map from $\mu$ to $\mu_n$ if it satisfies the push-forward condition
\begin{equation*}
  \mu_n=T_{n\sharp}\mu=\mu\circ T_n^{-1}.
\end{equation*}
It implies the change of variables,
\begin{equation}\label{eq:cv1}
  \frac{1}{n}\sum_{i=1}^{n}\varphi(x_i)=\int_{\Omega}\varphi(x)d\mu_n(x)
  =\int_{\Omega}\varphi(T_n(x))d\mu(x)=\int_{\Omega}\varphi(T_n(x))\rho(x)dx,
\end{equation}
and
\begin{equation}\label{eq:cv2}
  \mu_n\text{-}\esssup_{x\in\Omega_n}\varphi(x)=\mu\text{-}\esssup_{y\in\Omega}\varphi(T_n(y))=\esssup_{y\in\Omega}\varphi(T_n(y)),
\end{equation}
for any function $\varphi:\Omega_n\rightarrow\mathbb{R}$.

The following proposition states the existence of the transportation map from $\mu$ to $\mu_n$ and its $L^\infty$ estimates \cite{slepcev2019analysis,trillos2015rate}.
\begin{proposition}\label{pr:2.3}
  Let $\{x_i\}_{i=1}^\infty$ be a sequence of independent random variables with distribution $\mu$ on $\Omega$. Then, almost surely there exist transportation maps $\{T_n\}_{n=1}^\infty$ from $\mu$ to $\mu_n$, such that
  \begin{align}\label{measure}
  \begin{split}
    c\leq\liminf_{n\rightarrow\infty}\frac{\|Id-T_n\|_{L^\infty(\Omega)}}{\delta_n}
    \leq\limsup_{n\rightarrow\infty}\frac{\|Id-T_n\|_{L^\infty(\Omega)}}{\delta_n}
    \leq C,
  \end{split}
  \end{align}
 where $\delta_n$ is defined in \eqref{delta}.
\end{proposition}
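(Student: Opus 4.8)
This proposition is the sharp rate of convergence of empirical measures in the $\infty$-transportation distance, so the plan is: (i) reduce to the uniform density on a cube; (ii) pass between transportation \emph{plans} and transportation \emph{maps}; (iii) establish matching upper and lower bounds on the relevant $\infty$-transport cost; (iv) upgrade the high-probability estimates to almost sure ones. For step (i), since $\rho$ is continuous with $0<\inf\rho\le\sup\rho<\infty$ on the Lipschitz domain $\Omega$, a Moser-type construction produces a bi-Lipschitz homeomorphism $\Phi:Q\to\Omega$, $Q=[0,1]^d$, pushing the uniform measure forward to $\mu$; conjugating transportation maps by $\Phi$ and $\Phi^{-1}$ changes $\|Id-T_n\|_{L^\infty}$ only by the bi-Lipschitz constants of $\Phi$, a fixed multiplicative factor, which is harmless because \eqref{measure} only asks for constants $c,C$. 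After this reduction the $x_i$ are i.i.d. uniform on $Q$ and $\delta_n$ is unchanged up to constants.

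For step (ii): given any coupling $\pi$ of $\mathrm{Leb}_Q$ with $\mu_n=\frac1n\sum_{i}\delta_{x_i}$ whose $\pi$-essential displacement is $\le r$, absolute continuity of the first marginal lets me split $Q$ into measurable cells $V_1,\dots,V_n$ with $|V_i|=1/n$ and $V_i\subset B(x_i,r)$, so that $T_n:=\sum_i x_i\,\mathbf{1}_{V_i}$ is a genuine transportation map with $\|Id-T_n\|_{L^\infty}\le r$; conversely a map induces such a plan. Hence it suffices to control $W_\infty(\mathrm{Leb}_Q,\mu_n)$, equivalently the bottleneck matching cost between the $x_i$ and a fixed regular grid, and then read off the map. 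For the \emph{upper bound} I would run a multiscale matching. Partition $Q$ into subcubes of volume $v$; the count of sample points in each is $\mathrm{Binomial}(n,v)$, so Bernstein/Chernoff gives, with probability $\ge 1-n^{-2}$, that every subcube carries within a constant factor of its mean $nv$, provided $nv\gtrsim\text{polylog}(n)$. Iterating this over dyadically refined scales down to the scale where the expected count is $O(\log n)$ (so that a union bound forbids empty cells) matches every point to a grid node with total displacement $O(\delta_n)$ for $d\ge 3$; for $d=2$ the exponent $3/4$ on the logarithm comes from the Leighton–Shor analysis of planar matching, and for $d=1$ I would instead use the iterated-logarithm law $\sup_t|F_n(t)-t|\sim\sqrt{\ln\ln n/(2n)}$ a.s., whose $1$-dimensional transport interpretation yields precisely the stated $\delta_n$.

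For the \emph{lower bound} ($d\ge 2$), a second-moment computation on the number of sample-free balls shows that with probability tending to $1$ there is a ball $B(x_0,c\delta_n)\subset Q$ containing no $x_i$; then the positive mass $\mu(B(x_0,c\delta_n/2))$ must be transported outside $B(x_0,c\delta_n)$, forcing $\|Id-T_n\|_{L^\infty}\ge c\delta_n/2$ for every transportation map. For $d=1$ this is the complementary half of the iterated-logarithm law (note the largest gap between order statistics, $\sim\ln n/n$, is too small here). Finally, to make everything almost sure I would note that each bound holds with probability $\ge 1-n^{-2}$ along $n=2^k$, apply Borel–Cantelli to remove a null set along this subsequence, and interpolate to all $n$ by controlling how $W_\infty(\mathrm{Leb}_Q,\mu_n)$ changes as $n$ ranges over $[2^k,2^{k+1}]$ (adding the extra sample points one at a time), which gives both the $\liminf$ and $\limsup$ statements in \eqref{measure}.

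\textbf{Main obstacle.} The difficulty is not the power of $n$ — crude empty-ball and concentration arguments already give the $n^{-1/d}$ scaling — but the sharp logarithmic corrections in low dimension: the $(\ln n)^{3/4}$ factor for $d=2$ needs the delicate Leighton–Shor/Talagrand-type analysis of the bipartite planar matching, and the $\sqrt{\ln\ln n}$ factor for $d=1$ needs the exact constant in the law of the iterated logarithm for the empirical process. Obtaining both the matching upper and lower bounds with these precise exponents, and doing so almost surely rather than merely in probability, is the technical heart; for the purposes of this paper one invokes \cite{trillos2015rate} and \cite{slepcev2019analysis} directly.
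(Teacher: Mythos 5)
The paper does not prove this proposition at all: it is imported verbatim from the cited references (Garc\'{\i}a Trillos--Slep\v{c}ev and the rate-of-convergence paper \cite{trillos2015rate}), so the ``paper's proof'' is just the citation, and your final remark that one invokes those references is exactly what the authors do. Your sketch of what lies behind the citation is broadly faithful to that literature: the reduction to the unit cube by a bi-Lipschitz change of variables, the equivalence between $\infty$-transport plans and maps for a non-atomic source and a discrete target, the dyadic multiscale matching for the upper bound, the Leighton--Shor refinement in $d=2$, the law of the iterated logarithm in $d=1$, and the Borel--Cantelli upgrade along $n=2^k$ with interpolation in between.

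There is, however, one concrete gap in the part you do spell out: the lower bound for $d=2$. The empty-ball second-moment argument shows that the largest sample-free ball has radius of order $(\ln n/n)^{1/d}$; for $d\ge 3$ this matches $\delta_n$, but for $d=2$ it gives only $\|Id-T_n\|_{L^\infty}\gtrsim \sqrt{\ln n/n}$, which falls short of $\delta_n=(\ln n)^{3/4}/\sqrt{n}$ by a factor $(\ln n)^{1/4}$ --- so the quantity $\|Id-T_n\|_{L^\infty}/\delta_n$ that you would bound below actually tends to $0$ along that estimate. The genuine lower bound at the $(\ln n)^{3/4}$ scale is the hard half of the Ajtai--Koml\'os--Tusn\'ady theorem and requires a Fourier/combinatorial argument, not a local empty-region count; your ``main obstacle'' paragraph gestures at this for the upper bound but the body of the proof claims the $d\ge2$ lower bound follows from empty balls, which it does not when $d=2$. (A smaller point: turning a plan with essential displacement $\le r$ into a partition $\{V_i\}$ with $|V_i|=1/n$ and $V_i\subset B(x_i,r)$ needs a measurable-selection or flow argument rather than mere absolute continuity of the first marginal, but this is standard for semi-discrete targets and is carried out in the cited references.) Since the paper only ever uses the upper bound $\limsup\le C$, none of this affects anything downstream, but as a proof of the stated two-sided estimate the $d=2$ case is incomplete.
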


In the following, we do not use the almost sure statement but consider our problem in the deterministic setting. More precisely, we assume that the unlabeled points are the realization of the random variables $\{x_i\}_{i=1}^\infty$ mentioned in Proposition \ref{pr:2.3} such that conclusion \eqref{measure} holds.

To conclude this subsection, we recall the $TL^p$ space \cite{garcia2016continuum}, which provides an appropriate topology for the convergence of discrete functions to continuum functions.

\begin{definition}
  The space $TL^p(\Omega)$ is defined as
  \begin{equation*}
    TL^p(\Omega)=\{(\nu,g): \nu\in\mathcal{P}(\overline{\Omega}), g\in L^p(\nu)\},
  \end{equation*}
  where $\mathcal{P}(\overline{\Omega})$ is the space of probability measures.
  It is endowed with the $TL^p$-metric
  \begin{align*}
    d^p_{TL^p}((\nu_1,g_1),(\nu_2,g_2))=\inf_{\pi\in\Pi(\nu_1,\nu_2)}\left\{\iint_{\Omega\times\Omega}
    |x-y|^p+|g_1(x)-g_2(y)|^pd\pi(x,y)\right\},
  \end{align*}
  where $\Pi(\nu_1,\nu_2)$ is the set of transportation plans.
\end{definition}

The following proposition provides a characterization of the convergence of sequences in $TL^p(\Omega)$ \cite{garcia2016continuum}.
\begin{proposition}
  Let $(\nu,g)\in TL^p(\Omega)$ and $\{(\nu_n,g_n)\}_{n=1}^\infty\subset TL^p(\Omega)$, where $\nu$ is {absolutely continuous with respective to the Lebesgue measure}. Then $(\nu_n,g_n)\rightarrow (\nu,g)$ in $TL^p(\Omega)$ as $n\rightarrow\infty$ if and only if
  $\nu_n\rightarrow\nu$ weakly as $n\rightarrow\infty$ and
  there exists a sequence of transportation maps $\{T_n\}_{n=1}^\infty$ from $\nu$ to $\nu_n$ such that
  \begin{align*}
    \lim_{n\rightarrow\infty}\int_{\Omega}|x-T_n(x)|^pd\nu(x)=0,
  \end{align*}
  and
  \begin{align*}
    \lim_{n\rightarrow\infty}\int_{\Omega}|g(x)-g_n(T_n(x))|^pd\nu(x)=0.
  \end{align*}
\end{proposition}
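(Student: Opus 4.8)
\emph{Proof sketch.} The plan is to establish the two implications separately, using only the definition of the $TL^p$-metric, the elementary projection bound $d_{TL^p}\big((\theta_1,h_1),(\theta_2,h_2)\big)\ge W_p(\theta_1,\theta_2)$ (obtained by dropping the term $|h_1-h_2|^p$ from the transportation cost), where $W_p$ denotes the $p$-Wasserstein distance on the compact set $\overline\Omega$, together with the standard fact that on $\overline\Omega$ convergence in $W_p$ is equivalent to weak convergence of measures.

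For the ``if'' direction, suppose maps $T_n$ with $T_{n\sharp}\nu=\nu_n$ and the two stated limits are given. I would take $\pi_n:=(\mathrm{Id}\times T_n)_{\sharp}\nu\in\Pi(\nu,\nu_n)$ as a competitor in the definition of $d_{TL^p}$; the change of variables \eqref{eq:cv1} then yields
\begin{equation*}
  d^p_{TL^p}\big((\nu,g),(\nu_n,g_n)\big)\le\int_{\Omega}\big(|x-T_n(x)|^p+|g(x)-g_n(T_n(x))|^p\big)\,d\nu(x)\longrightarrow0 .
\end{equation*}
(The weak convergence $\nu_n\to\nu$ is in fact forced by the first limit, via $W_p(\nu,\nu_n)^p\le\int_\Omega|x-T_n(x)|^p\,d\nu$, so it is a genuine hypothesis only in the converse.)

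For the ``only if'' direction, assume $d_{TL^p}((\nu,g),(\nu_n,g_n))\to0$. The projection bound gives $W_p(\nu,\nu_n)\to0$, hence $\nu_n\to\nu$ weakly. For the maps, I would choose near-optimal plans $\pi_n\in\Pi(\nu,\nu_n)$ with
\begin{equation*}
  c_n:=\iint_{\Omega\times\Omega}\big(|x-y|^p+|g(x)-g_n(y)|^p\big)\,d\pi_n(x,y)\le d^p_{TL^p}\big((\nu,g),(\nu_n,g_n)\big)+2^{-n}\longrightarrow0 ,
\end{equation*}
and then upgrade each $\pi_n$ to a transport map of essentially the same cost. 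This is the single step that uses $\nu\ll\mathcal{L}^d$: transport maps out of an absolutely continuous source are dense among transport plans. To make this quantitative against our cost, whose second half involves the merely $L^p$ (hence possibly discontinuous, a.e.-defined) functions $g,g_n$, I would first replace $g,g_n$ by continuous approximants in $L^p(\nu)$ and $L^p(\nu_n)$, work with the resulting continuous cost (for which $\pi\mapsto\iint(\cdot)\,d\pi$ is weakly continuous, so the density of map-induced plans applies) and transfer the estimate back by Minkowski's inequality, keeping all errors below $2^{-n}$. In the case actually needed here, where $\nu_n=\tfrac1n\sum_i\delta_{x_i}$, a cleaner route is available: disintegrating $\pi_n$ along its second marginal yields $\nu=\sum_i\lambda^{(i)}_n$ with $\lambda^{(i)}_n\ll\nu$ and $\lambda^{(i)}_n(\Omega)=\tfrac1n$, and since $\nu$ is non-atomic, Lyapunov's convexity theorem produces a Borel partition $\{A^{(i)}_n\}$ of $\Omega$ with $\int_{A^{(i)}_n}\phi\,d\nu=\int_\Omega\phi\,d\lambda^{(i)}_n$ for each of the finitely many functions $\phi\in\{1,\,|x-x_i|^p,\,|g(x)-g_n(x_i)|^p\}$; setting $T_n\equiv x_i$ on $A^{(i)}_n$ then gives $T_{n\sharp}\nu=\nu_n$ with cost equal to $c_n$ exactly. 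Either way the resulting $T_n$ satisfy $\int_\Omega(|x-T_n(x)|^p+|g(x)-g_n(T_n(x))|^p)\,d\nu\le c_n+2^{-n}\to0$, and since both nonnegative summands must then vanish in the limit, this is the assertion.

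I expect the plan-to-map conversion to be the main obstacle; everything else — weak compactness of $\Pi(\nu,\nu_n)$, the projection inequality, and the equivalence of $W_p$-convergence with weak convergence on a bounded domain — is routine. The subtlety is that the transportation cost carries the only-$L^p$ functions $g,g_n$, so the usual lower-semicontinuity/weak-continuity machinery for optimal-transport functionals does not apply verbatim and must be coupled with an $L^p$-density step (or bypassed by the Lyapunov partition in the empirical-measure setting); and it is precisely here, and only here, that the hypothesis $\nu\ll\mathcal{L}^d$ is indispensable, since for atomic $\nu$ there need not exist any transport map onto $\nu_n$ at all.
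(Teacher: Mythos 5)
Your argument is correct. Note that the paper itself gives no proof of this proposition --- it is quoted from \cite{garcia2016continuum} --- and your proof reconstructs essentially the standard argument from that reference: the ``if'' direction by testing the infimum with the plan $(\mathrm{Id}\times T_n)_{\sharp}\nu$, and the ``only if'' direction by selecting near-optimal plans and converting them to maps, which is exactly where $\nu\ll\mathcal{L}^d$ enters. Two small remarks: the partition result you invoke for the empirical-measure case is really the Dvoretzky--Wald--Wolfowitz refinement of Lyapunov's theorem (one needs the exact-partition version matching the prescribed fractional partition against the finitely many densities $\phi\,d\nu$, not merely convexity of the range of a vector measure), and in the continuous-approximation route one should record that the marginals of \emph{every} competitor plan are $\nu$ and $\nu_n$, so the Minkowski transfer of the $L^p$-approximation errors is uniform over plans --- both points are standard and your sketch already contains the ingredients to settle them.
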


In our case, the empirical measure $\mu_n$ converges weakly to $\mu$ as $n\rightarrow\infty$. Proposition \ref{pr:2.3} implies the existence of the transportation map $T_n$.
To conclude the convergence of $(\mu_n,g_n)$ to $(\mu,g)$ in $TL^p(\Omega)$ we only need to verify the convergence of $g_n\circ T_n$ to $g$ in $L^p(\Omega)$.

\section{Continuum Limit of the $p$-Laplacian Regularization on the $\varepsilon_n$-ball hypergraph}\label{se:3}

In this section, we study the continuum limit of the $p$-Laplacian regularization $\mathcal{E}^{con}_{n,\varepsilon_n}$ (see \eqref{eq:1.5} for its definition) on the $\varepsilon_n$-ball hypergraph $H_{n,\varepsilon_n}$.
The main result of this section is stated as follows.

\begin{theorem}\label{th:3.1}
  Let $p>d$ and $\delta_n\ll\varepsilon_n\ll1$. If $u_n$ is a minimizer of $\mathcal{E}^{con}_{n,\varepsilon_n}$, then almost surely,
  \begin{equation*}
    (\mu_n,u_n)\rightarrow (\mu,u),\quad\mbox{in } TL^p(\Omega),
  \end{equation*}
  as $n\rightarrow\infty$ for a function $u\in L^{p}(\Omega)$.
  Furthermore, $u$ is continuous,
  \begin{equation}\label{eq:th:3.1:1}
    \lim_{n\rightarrow\infty}\max_{x_k\in\Omega_n\cap\Omega'}|u_n(x_k)-u(x_k)|=0,
  \end{equation}
  for any $\Omega'$ compactly contained in $\Omega$, and
  $u$ is a minimizer of $\mathcal{E}^{con}$.
\end{theorem}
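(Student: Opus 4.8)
The plan is to follow the standard $\Gamma$-convergence strategy of \cite{slepcev2019analysis,garcia2016continuum}, splitting the work into three parts: (i) a compactness result, (ii) the $\liminf$ inequality, and (iii) the $\limsup$ (recovery sequence) inequality, and then invoking Proposition \ref{pr:2.2}. Throughout I would work in the deterministic setting afforded by Proposition \ref{pr:2.3}, using the transportation maps $T_n$ from $\mu$ to $\mu_n$ with $\|Id-T_n\|_{L^\infty}=O(\delta_n)$, and I would compare $\mathcal{E}_{n,\varepsilon_n}$ with a suitable nonlocal continuum functional acting on $\tilde u=u_n\circ T_n$, much as the graph case compares with $\iint |u(x)-u(y)|^p \eta_{\varepsilon_n}(x-y)\,d\mu\,d\mu$. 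The essential new feature is that the hyperedge term $\max_{x_i,x_j\in B(x_k,\varepsilon_n)}|u(x_i)-u(x_j)|^p$ is a \emph{Lipschitz-type} oscillation over a ball rather than a sum of pairwise differences, and the factor $2^p$ in the limit \eqref{eq:1.3} comes from the fact that the oscillation over a ball of radius $\varepsilon_n$ around a point where $\nabla u\neq 0$ is asymptotically $2\varepsilon_n|\nabla u|$, not $\varepsilon_n|\nabla u|$.

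For the \textbf{$\liminf$ inequality}: given $(\mu_n,u_n)\to(\mu,u)$ in $TL^p$ with $\liminf \mathcal{E}_{n,\varepsilon_n}^{con}(u_n)<\infty$, I would first note the constraint forces $u(x_i)=y_i$ in the limit (this uses the pointwise convergence \eqref{eq:th:3.1:1}, which itself must be proved, see below). To get the lower bound by $2^p\int_\Omega|\nabla u|^p\rho$, the key observation is that for each $x_k$ and any unit vector $\nu$, the hyperedge $B(x_k,\varepsilon_n)$ contains (for $n$ large) points near $x_k\pm\varepsilon_n\nu$, so $\max_{x_i,x_j\in e_k}|u_n(x_i)-u_n(x_j)|\ge |u_n(x_k+\varepsilon_n\nu)-u_n(x_k-\varepsilon_n\nu)|$ up to $O(\delta_n)$ perturbations. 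One then transfers this to $\tilde u_n=u_n\circ T_n$, obtains a bound below by a nonlocal functional of the form $\frac{1}{\varepsilon_n^p}\int_\Omega \sup_{|z|\le\varepsilon_n}|\tilde u_n(x+z)-\tilde u_n(x-z)|^p \rho(x)\,dx$ (modulo boundary-layer corrections of size $O(\varepsilon_n)$ near $\partial\Omega$), and passes to the limit using lower semicontinuity of such nonlocal energies and the fact that $\sup_{|z|\le\varepsilon}|u(x+z)-u(x-z)|/ (2\varepsilon) \to |\nabla u(x)|$ for $W^{1,p}$ functions. Establishing that the limit $u$ lies in $W^{1,p}$ when $\liminf\mathcal{E}_{n,\varepsilon_n}(u_n)<\infty$ is part of this step and uses a nonlocal-to-local compactness argument à la Bourgain–Brezis–Mironescu / \cite{garcia2016continuum}.

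For the \textbf{$\limsup$ inequality}: given $u\in W^{1,p}(\Omega)$ with $u(x_i)=y_i$, I would build the recovery sequence as $u_n = u\circ T_n$ restricted to $\Omega_n$ after a mild modification near $\mathcal{O}$. Because $p>d$, Morrey's embedding makes $u$ (H\"older) continuous, so the constraint $u_n(x_i)=y_i$ holds exactly with no modification if we simply set $u_n(x_i)=u(x_i)=y_i$. For a Lipschitz $u$ one computes $\max_{x_i,x_j\in B(x_k,\varepsilon_n)}|u(x_i)-u(x_j)|^p \le (2\varepsilon_n|\nabla u(x_k)| + o(\varepsilon_n))^p$ pointwise (the $TL^p$-convergence of $u\circ T_n$ to $u$ and \eqref{eq:cv1} handle the sum), and a density argument (Lipschitz functions are dense in $W^{1,p}$ and the constraint set is closed under this) extends it to general $u\in W^{1,p}$; the $o(\varepsilon_n)$ control near $\partial\Omega$ again requires a boundary-layer estimate using the Lipschitz regularity of $\partial\Omega$.

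The \textbf{main obstacle} I anticipate is twofold and specific to the hypergraph structure. First, proving the pointwise/uniform convergence \eqref{eq:th:3.1:1} on compact subsets: unlike the graph $p$-Laplacian, $\mathcal{E}_{n,\varepsilon_n}$ controls only the \emph{maximal} oscillation on each ball, so one must show an a priori oscillation/H\"older-type bound — essentially that a bounded-energy sequence is equi-H\"older-continuous on compactly contained subsets (a discrete Morrey inequality for the hypergraph functional, exploiting $p>d$ and the connectivity of $H_{n,\varepsilon_n}$ via chaining of overlapping hyperedges), which then upgrades $TL^p$-convergence to uniform convergence and pins down the constraint. Second, the weaker radius assumption $\varepsilon_n\ll 1$ (rather than $\varepsilon_n\ll n^{-1/p}$ as in the graph case): the graph upper-bound restriction came from needing the number of labeled points to be negligible relative to $n\varepsilon_n^{d}$ in a \emph{sum}; here the $\max$ automatically localizes the influence of each label to the $O(1)$ many hyperedges containing it, so the labels never "wash out." Making this precise — showing that removing or modifying the finitely many hyperedges that touch $\mathcal{O}$ changes the energy by $o(1)$ for any $\varepsilon_n\ll1$ — is the crux of why the hypergraph result improves on the graph result, and I would isolate it as a separate lemma before assembling the $\Gamma$-convergence argument.
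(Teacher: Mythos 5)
Your overall skeleton (compactness, liminf/limsup via a nonlocal intermediate functional built on the transportation maps $T_n$, then Proposition \ref{pr:2.2}) is the same as the paper's, and your identification of the constant $2^p$ and of the recovery sequence as the restriction of $u$ (no modification needed near $\mathcal{O}$, since $p>d$ makes $u$ continuous) is correct. The paper additionally gets compactness and the $W^{1,p}$ regularity of the limit for free by observing that the hypergraph energy dominates the graph $p$-Laplacian energy with indicator kernel (inequality \eqref{eq:th:3.1:5}), together with an $L^\infty$ bound by truncation at $M=\max_i|y_i|$ — a step your sketch omits but which is standard.

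The genuine gap is in the step you yourself flag as the crux: why the constraint survives under the weak assumption $\varepsilon_n\ll 1$. Your stated mechanism is wrong on two counts. First, each labeled point lies in roughly $n\varepsilon_n^d\to\infty$ hyperedges, not $O(1)$ many, so "the $\max$ localizes the influence of each label to the $O(1)$ many hyperedges containing it" is false; and "removing the hyperedges touching $\mathcal{O}$ changes the energy by $o(1)$" addresses the limsup direction, which is not where the difficulty lies (the recovery sequence already satisfies the constraint). The difficulty is the liminf direction: showing that a bounded-energy sequence with $u_n(x_i)=y_i$ has a limit satisfying $u(x_i)=y_i$. Second, your proposed substitute — a discrete Morrey/equi-H\"older estimate by chaining overlapping hyperedges — fails in its naive form: a chain of length $\sim 1/\varepsilon_n$ combined with H\"older's inequality bounds the oscillation between two points at distance $O(1)$ only by $(1/\varepsilon_n)^{1-1/p}\bigl(n\varepsilon_n^p\,\mathcal{E}_{n,\varepsilon_n}(u_n)\bigr)^{1/p}\sim(n\varepsilon_n)^{1/p}\to\infty$. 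The paper's actual mechanism (Lemma \ref{le:3.7}) is different and more elementary: because every pair $(x_k,x_j)$ with $|x_k-x_j|\le\varepsilon_n$ is dominated by the max over the hyperedge centered at $x_j$, one gets for \emph{every fixed vertex} $x_k$ the bound
\begin{equation*}
  \frac{1}{n\varepsilon_n^{p}}\sum_{j=1}^{n}\eta\Bigl(\frac{x_k-x_j}{\varepsilon_n}\Bigr)|u_n(x_k)-u_n(x_j)|^p\leq \mathcal{E}_{n,\varepsilon_n}(u_n),
\end{equation*}
i.e.\ the single-vertex pairwise sum is controlled by the \emph{full} energy with normalization $\tfrac{1}{n\varepsilon_n^p}$ rather than the graph case's $\tfrac{1}{n^2\varepsilon_n^p}$ — a gain of a factor $n$. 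Normalizing by the local mass $\sim\varepsilon_n^d$ and applying Jensen gives $\int J_{\hat\varepsilon_n}(x_k-y)|\tilde u_n(y)-u_n(x_k)|\,dy=O(\varepsilon_n^{(p-d)/p})\to 0$ for $p>d$ with no further restriction on $\varepsilon_n$, and combining with the uniform convergence $J_{\hat\varepsilon_n}*\tilde u_n\to u$ (inherited from the graph compactness theory) yields \eqref{eq:th:3.1:1} without any equicontinuity of $u_n$. If you want to complete your proof you should replace the chaining lemma by this single-vertex estimate; as proposed, that step would not go through.
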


\begin{proof}
  We first claim that $\mathcal{E}^{con}_{n,\varepsilon_n}$ admits at least one minimizer $u_n$ for any fixed $n>0$ and $\mathcal{E}^{con}$ admits a unique minimizer.

  Let us define the vector space
  \begin{equation*}
    V=\{v:\Omega_n\rightarrow\mathbb{R}\},
  \end{equation*}
  and its subspace
  \begin{equation*}
    V_{\mathcal{O}}=\{v\in V: v(x_i)=y_i \mbox{ for } x_i\in\mathcal{O}\}.
  \end{equation*}
  It follows from the Poincar\'{e} type inequality (see \cite[Lemma 1]{el2020discrete} for $p=2$, the proof works for any $p>1$) that
  \begin{align*}
    \sum_{x_i\in\Omega_n\backslash\mathcal{O}}|u(x_i)|^p
    &\leq C\sum_{x_i\in\Omega_n}\sum_{x_j\in B(x_i,\varepsilon_n)}|u(x_i)-u(x_j)|^p+C\sum_{x_i\in\mathcal{O}}|u(x_i)|^p \\
    &\leq C\mathcal{E}_{n,\varepsilon_n}(u)+C\sum_{x_i\in\mathcal{O}}|u(x_i)|^p,
  \end{align*}
  and $\mathcal{E}_{n,\varepsilon_n}$ is coercive on $V_{\mathcal{O}}$.
  Clearly, it is also lower semi-continuous on $V_{\mathcal{O}}$.
  Then $\mathcal{E}_{n,\varepsilon_n}$ admits a minimizer $u_n$ on $V_{\mathcal{O}}$, which is also a minimizer of $\mathcal{E}^{con}_{n,\varepsilon_n}$.
  The existence of a unique minimizer for $\mathcal{E}^{con}$ can be verified similarly, see also \cite{slepcev2019analysis}.

Let $M= \max_{1\leq i\leq N}|y_i|$.
For any function $f:\Omega\rightarrow\mathbb{R}$, we have
\begin{equation*}
  \mathcal{E}^{con}_{n,\varepsilon_n}(f_M)\leq \mathcal{E}^{con}_{n,\varepsilon_n}(f),
\end{equation*}
where $f_M:=\max\{\min\{f,M\},-M\}$ denotes the truncation of $f$.
Consequently,
\begin{equation}\label{eq:th:3.1:2}
  \|u_n\|_{L^\infty(\Omega)}\leq M,
\end{equation}
for any $n>0$.
  Notice that
\begin{align}\label{eq:th:3.1:5}
  \begin{split}
\frac{1}{n^2\varepsilon_n^{p}}\sum_{i=1}^{n}\sum_{j=1}^{n}\chi_{B(x_i,\varepsilon_n)}&(x_j)
|u_n(x_i)-u_n(x_j)|^p \\
&\leq\frac{1}{n\varepsilon_n^{p}}\sum_{k=1}^{n}\max_{x_i,x_j\in B(x_k,\varepsilon_n)}|u_n(x_i)-u_n(x_j)|^p\\
&=\mathcal{E}_{n,\varepsilon_n}(u_n)\leq \mathcal{E}^{con}_{n,\varepsilon_n}(u_n)<\infty.
  \end{split}
\end{align}
The left-hand side is just the graph $p$-Laplacian regularization with the indicator function as the kernel.
By the compactness of the graph $p$-Laplacian regularization (see \cite[Proposition 4.4]{slepcev2019analysis}) and \eqref{eq:th:3.1:2}, there exists a subsequence of $\{u_n\}_{n\in\mathbb{N}}$, denoted by $\{u_{n_m}\}_{m\in\mathbb{N}}$, and a function $u\in L^p(\Omega)$, such that
  \begin{equation}\label{eq:th:3.1:3}
    (\mu_{n_m},u_{n_m})\rightarrow (\mu,u),\quad\mbox{in } TL^p(\Omega),
  \end{equation}
as $m\rightarrow \infty$.

Let us  assume that
  \begin{equation}\label{eq:th:3.1:4}
    \mathcal{E}_{n,\varepsilon_n}^{con}\stackrel{\Gamma}{\longrightarrow }
      \mathcal{E}^{con},
  \end{equation}
  in the $TL^p$ metric on the set $\{(\nu,g): \nu\in\mathcal{P}(\Omega), \|g\|_{L^\infty(\nu)}\leq M\}$,
 which will be proven in the following in Theorem \ref{th:3.8}. Then it follows from Proposition \ref{pr:2.2} that $u$ is a minimizer of $\mathcal{E}^{con}$.
Since the minimizer of $\mathcal{E}^{con}$ is unique, the convergence in \eqref{eq:th:3.1:3} holds along the whole sequence.
The uniform convergence \eqref{eq:th:3.1:1} will be proven in Lemma \ref{le:3.7}.
\end{proof}

\begin{remark}
  If $1<p\leq d$, we still have \eqref{eq:th:3.1:3}. But now $\mathcal{E}^{con}=\mathcal{E}$ and \eqref{eq:th:3.1:4} becomes
  \begin{equation*}
    \mathcal{E}_{n,\varepsilon_n}^{con}\stackrel{\Gamma}{\longrightarrow }
      \mathcal{E}.
  \end{equation*}
  By Proposition \ref{pr:2.2}, $u$ is a minimizer of $\mathcal{E}$. In this case we no longer have the uniform convergence \eqref{eq:th:3.1:1}.
\end{remark}

The rest of this section is devoted to the proof of the $\Gamma$-convergence \eqref{eq:th:3.1:4} and the uniform convergence \eqref{eq:th:3.1:1}.

\subsection{Nonlocal to local convergence}
In this section,
we introduce the nonlocal functional
\begin{equation*}
  \mathcal{E}_{\varepsilon}(u)=
  \int_{\Omega}\frac{1}{(\varepsilon(x))^{p}}\esssup_{y,z\in B(x,\varepsilon(x))\cap\Omega}|u(z)-u(y)|^p\rho(x)dx
\end{equation*}
as a bridge between $\mathcal{E}_{n,\varepsilon_n}$ and $\mathcal{E}$, and prove the $\Gamma$-convergence of $\mathcal{E}_{\varepsilon}$ to $\mathcal{E}$ in $L^p(\Omega)$ as $\varepsilon\rightarrow 0$.
Here we consider a general setting where the connection radius $\varepsilon$ depends on the location $x\in\Omega$.
This is helpful for Section \ref{se:4} in which we study the continuum limit of the $p$-Laplacian regularization on the $k_n$-NN hypergraph.

The $\Gamma$-convergence of $\mathcal{E}_{\varepsilon}$ to $\mathcal{E}$ in $L^p(\Omega)$ as $\varepsilon\rightarrow 0$ is understood in the following sense. For any sequence of functions $\{\varepsilon_n(x)\}_{n\in\mathbb{N}}$
such that
\begin{equation}\label{eq:vare1}
  1\leq\frac{\sup_{x\in\Omega}\varepsilon_n(x)}{\inf_{x\in\Omega}\varepsilon_n(x)}\leq C<\infty,\quad \forall n>0,
\end{equation}
{and}
\begin{equation*}
  0<\inf_{x\in\Omega}\varepsilon_n(x)
  \leq \sup_{x\in\Omega}\varepsilon_n(x) \rightarrow 0,
\end{equation*}
as $n\rightarrow\infty$, we have
\begin{equation*}
  \mathcal{E}_{\varepsilon_n}\stackrel{\Gamma}{\longrightarrow }
    \mathcal{E},
\end{equation*}
in $L^p(\Omega)$ as $n\rightarrow\infty$.
In this subsection, we simply use $\varepsilon\rightarrow 0$ to represent that $\sup_{x\in\Omega}\varepsilon_n(x)\rightarrow 0$ as $n\rightarrow\infty$ for any sequence of functions $\{\varepsilon_n(x)\}_{n\in\mathbb{N}}$.

  The characteristic function $\eta: \mathbb{R}^d\rightarrow\mathbb{R}$ defined by
  \begin{equation*}
    \eta(x)=
    \begin{cases}
      1,\quad \mbox{if } |x|\leq 1,\\
      0,\quad \mbox{if }  |x|>1,
    \end{cases}
  \end{equation*}
  will be used several times in the following.

\begin{lemma}\label{le:3.2}
  Let $\{u_\varepsilon\}$ be a sequence of uniformly bounded $C^2$ functions.
  If $\nabla u_\varepsilon\rightarrow \nabla u$ in $L^p(\Omega)$ as $\varepsilon\rightarrow 0$, then
  \begin{equation*}
    \lim_{\varepsilon\rightarrow 0} \mathcal{E}_\varepsilon(u_\varepsilon)=\mathcal{E}(u).
  \end{equation*}
\end{lemma}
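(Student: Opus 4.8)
The plan is to localize the rescaled maximal oscillation by a second-order Taylor expansion and then pass to the limit through the $L^p$ convergence of the gradients. Write $\varepsilon=\varepsilon_n$, $\varepsilon_+=\sup_\Omega\varepsilon$, and let $K=\sup_n\|D^2u_{\varepsilon_n}\|_{L^\infty(\Omega)}$ and $M'=\sup_n\|\nabla u_{\varepsilon_n}\|_{L^\infty(\Omega)}$, both finite because the family is uniformly bounded in $C^2$; in particular $\varepsilon_+K\to0$. Fix $x$ with $\operatorname{dist}(x,\partial\Omega)>\varepsilon_+$, so $B(x,\varepsilon(x))\subset\Omega$. Taylor's theorem gives, for all $y,z\in B(x,\varepsilon(x))$, $u_\varepsilon(z)-u_\varepsilon(y)=\nabla u_\varepsilon(x)\cdot(z-y)+r$ with $|r|\le 2\varepsilon(x)^2K$, so by Cauchy--Schwarz $|u_\varepsilon(z)-u_\varepsilon(y)|\le 2\varepsilon(x)|\nabla u_\varepsilon(x)|+2\varepsilon(x)^2K$; for the matching lower bound, testing the oscillation at $z=x+\varepsilon(x)e,\ y=x-\varepsilon(x)e$ with $e=\nabla u_\varepsilon(x)/|\nabla u_\varepsilon(x)|$ when $\nabla u_\varepsilon(x)\neq0$ (and noting the oscillation is $\le2\varepsilon(x)^2K$ when $\nabla u_\varepsilon(x)=0$) shows that in all cases
\[
  \Bigl|\tfrac{1}{\varepsilon(x)}\esssup_{y,z\in B(x,\varepsilon(x))\cap\Omega}|u_\varepsilon(z)-u_\varepsilon(y)|-2|\nabla u_\varepsilon(x)|\Bigr|\le 2\varepsilon(x)K.
\]
Combining this with $|a^p-b^p|\le p\max\{a,b\}^{p-1}|a-b|$ and $|\nabla u_\varepsilon(x)|\le M'$ yields the pointwise bound
\[
  \Bigl|\tfrac{1}{\varepsilon(x)^{p}}\esssup_{y,z\in B(x,\varepsilon(x))\cap\Omega}|u_\varepsilon(z)-u_\varepsilon(y)|^{p}-2^{p}|\nabla u_\varepsilon(x)|^{p}\Bigr|\le C(p,M',K)\,\varepsilon_+
\]
on the interior region $\Omega^{\mathrm{in}}_\varepsilon:=\{x\in\Omega:\operatorname{dist}(x,\partial\Omega)>\varepsilon_+\}$.

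I would then integrate against $\rho\,dx$. Over $\Omega^{\mathrm{in}}_\varepsilon$ the displayed bound integrates to $O(\varepsilon_+)\to0$ since $\rho$ is bounded and $|\Omega|<\infty$. Over the complementary layer $\Omega\setminus\Omega^{\mathrm{in}}_\varepsilon$, whose Lebesgue measure is $O(\varepsilon_+)$ because $\partial\Omega$ is Lipschitz, the crude estimates $\tfrac{1}{\varepsilon(x)^{p}}\esssup_{y,z}|u_\varepsilon(z)-u_\varepsilon(y)|^{p}\le(2M'+2\varepsilon_+K)^{p}$ and $2^p|\nabla u_\varepsilon(x)|^p\le(2M')^p$ force both $\int_{\Omega\setminus\Omega^{\mathrm{in}}_\varepsilon}\tfrac{1}{\varepsilon(x)^{p}}\esssup_{y,z}|u_\varepsilon(z)-u_\varepsilon(y)|^{p}\rho\,dx$ and $\int_{\Omega\setminus\Omega^{\mathrm{in}}_\varepsilon}2^{p}|\nabla u_\varepsilon|^{p}\rho\,dx$ to vanish as $n\to\infty$. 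Hence
\[
  \mathcal{E}_\varepsilon(u_\varepsilon)=2^{p}\int_\Omega|\nabla u_\varepsilon|^{p}\rho\,dx+o(1).
\]
Finally, $\nabla u_\varepsilon\to\nabla u$ in $L^p(\Omega)$ together with the uniform bound $M'$ gives $|\nabla u_\varepsilon|^{p}\to|\nabla u|^{p}$ in $L^1(\Omega)$ (again by $|\,|a|^{p}-|b|^{p}|\le p\max\{|a|,|b|\}^{p-1}|a-b|$ and $|\nabla u|\le M'$), so the right-hand side tends to $2^{p}\int_\Omega|\nabla u|^{p}\rho\,dx$; as $u\in W^{1,p}(\Omega)$ (the $u_\varepsilon$ are bounded in $W^{1,p}(\Omega)$), this equals $\mathcal{E}(u)$, which is the claim.

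The two delicate points I expect are the following. First, the Taylor remainder must be negligible \emph{uniformly} in $n$, which is exactly what the uniform $C^2$ bound delivers through $\varepsilon_+K\to0$; this control on second derivatives is genuinely needed — without it one can build thin spikes whose gradients are small in $L^p$ but whose $\varepsilon$-ball oscillations are comparable to their heights, so that $\mathcal{E}_\varepsilon(u_\varepsilon)\not\to\mathcal{E}(u)$. Second, near $\partial\Omega$ the ball $B(x,\varepsilon(x))$ is clipped by the boundary, so the aligned test segment used for the lower bound may leave $\Omega$ and the leading profile $2^p|\nabla u_\varepsilon(x)|^p$ need not be attained; this is why one excises the $O(\varepsilon_+)$-thick boundary layer, on which both the nonlocal energy density and $|\nabla u_\varepsilon|^p$ are uniformly bounded and hence contribute $o(1)$. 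The comparability condition \eqref{eq:vare1} is only used cosmetically, to phrase all the estimates through the single scale $\varepsilon_+=\sup_\Omega\varepsilon_n$.
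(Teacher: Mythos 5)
Your proof is correct and follows essentially the same route as the paper: a Taylor expansion identifies the rescaled ball oscillation as $2|\nabla u_\varepsilon(x)|$ up to an $O(\varepsilon)$ remainder controlled by the uniform $C^2$ bound, the boundary layer (where the ball is clipped by $\partial\Omega$) is an $O(\varepsilon_+)$-measure set with bounded integrand and hence negligible, and the limit is taken via $\nabla u_\varepsilon\to\nabla u$ in $L^p$. Your write-up is merely more explicit than the paper's (notably in exhibiting the antipodal test points for the lower bound and in justifying $|\nabla u_\varepsilon|^p\to|\nabla u|^p$ in $L^1$), but there is no substantive difference.
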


\begin{proof}
  By change of variables $y=x+\varepsilon(x) \hat{y}$, $z=x+\varepsilon(x) \hat{z}$ and Taylor's expansion,
  \begin{align*}
    \mathcal{E}_\varepsilon(u_\varepsilon)
    &=
    \int_{\Omega}\frac{1}{(\varepsilon(x))^{p}}\esssup_{y,z\in\mathbb{R}^d}
    \eta\left(\frac{x-y}{\varepsilon(x)}\right)\eta\left(\frac{x-z}{\varepsilon(x)}\right)
    |u_\varepsilon(z)-u_\varepsilon(y)|^p \rho(x)dx -I_1\\
    &=
    \int_{\Omega}\frac{1}{(\varepsilon(x))^{p}}\esssup_{\hat{y},\hat{z}\in\mathbb{R}^d}
    \eta(\hat{y})\eta(\hat{z})
    |u_\varepsilon(x+\varepsilon(x) \hat{z})-u_\varepsilon(x+\varepsilon(x) \hat{y})|^p \rho(x)dx -I_1\\
    &=\int_{\Omega}\esssup_{\hat{y},\hat{z}\in\mathbb{R}^d}
    \eta(\hat{y})\eta(\hat{z})
    |\nabla u_\varepsilon(x)\cdot (\hat{z}-\hat{y})+\mathcal{O}(\varepsilon(x))|^p \rho(x)dx -I_1\\
    &=I_2-I_1,
  \end{align*}
where
\begin{align*}
  |I_1|=&\left|
    \int_{\Omega}\frac{1}{(\varepsilon(x))^{p}}\esssup_{y,z\in\mathbb{R}^d}
    \eta\left(\frac{x-y}{\varepsilon(x)}\right)\eta\left(\frac{x-z}{\varepsilon(x)}\right)
    |u_\varepsilon(z)-u_\varepsilon(y)|^p \rho(x)dx\right. \\
    &\left.-
    \int_{\Omega}\frac{1}{(\varepsilon(x))^{p}}\esssup_{y,z\in\Omega}
    \eta\left(\frac{x-y}{\varepsilon(x)}\right)\eta\left(\frac{x-z}{\varepsilon(x)}\right)
    |u_\varepsilon(z)-u_\varepsilon(y)|^p \rho(x)dx\right|\\
    \leq& C
    \int_{\Omega}\esssup_{y\in\mathbb{R}^d, z\in\mathbb{R}^d\backslash\Omega}
    \eta\left(\frac{x-y}{\varepsilon(x)}\right)\eta\left(\frac{x-z}{\varepsilon(x)}\right) dx,
\end{align*}
converges to zero as $\varepsilon\rightarrow 0$.
  Consequently,
  \begin{align*}
    \lim_{\varepsilon\rightarrow 0}\mathcal{E}_\varepsilon(u_\varepsilon)
    =\lim_{\varepsilon\rightarrow 0} I_2
    &=
    \int_{\Omega}\esssup_{\hat{y},\hat{z}\in\mathbb{R}^d}
    \eta(\hat{y})\eta(\hat{z})
    |\nabla u(x)\cdot (\hat{z}-\hat{y})|^p \rho(x)dx \\
    &=2^p\int_{\Omega}|\nabla u|^p  \rho dx=\mathcal{E}(u).
  \end{align*}
\end{proof}

Lemma \ref{le:3.2} implies the liminf inequality of the $\Gamma$-convergence, which is stated as follows.
\begin{lemma}\label{le:3.3}
  If $u_\varepsilon\rightarrow u$ in $L^p(\Omega)$ as $\varepsilon\rightarrow 0$, then
  \begin{equation*}
    \liminf_{\varepsilon\rightarrow 0} \mathcal{E}_\varepsilon(u_\varepsilon)\geq \mathcal{E}(u).
  \end{equation*}
\end{lemma}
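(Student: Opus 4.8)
The plan is to prove the liminf inequality for general $L^p$ limits by a standard mollification (diagonalization) argument that reduces everything to the already-established Lemma~\ref{le:3.2} for smooth functions. First I would dispose of the trivial case: if $\liminf_{\varepsilon\to 0}\mathcal{E}_\varepsilon(u_\varepsilon)=+\infty$ there is nothing to prove, so I may pass to a subsequence (not relabeled) along which the liminf is a finite limit and $u_\varepsilon\to u$ in $L^p(\Omega)$. A first reduction is to note that along this subsequence $u_\varepsilon$ has a uniformly bounded nonlocal energy, which — exactly as in the compactness argument used in the proof of Theorem~\ref{th:3.1}, comparing $\mathcal{E}_\varepsilon$ from below with a graph-type $p$-Dirichlet energy via the inequality analogous to \eqref{eq:th:3.1:5} — forces $u\in W^{1,p}(\Omega)$; if instead $u\notin W^{1,p}(\Omega)$ then $\mathcal{E}(u)=+\infty$ but also the nonlocal energies must blow up, a contradiction, so again there is nothing to prove. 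Hence we may assume $u\in W^{1,p}(\Omega)$.

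The core of the argument is a lower-semicontinuity estimate for $\mathcal{E}_\varepsilon$ under mollification. Fix a standard mollifier $\phi_\sigma$ and set $u^\sigma=u*\phi_\sigma$ (extending $u$ suitably near $\partial\Omega$, or working on a slightly shrunk domain $\Omega'\Subset\Omega$ and letting $\Omega'\uparrow\Omega$ at the end). The key point is that $\mathcal{E}_\varepsilon$ is built from the quantity $\esssup_{y,z\in B(x,\varepsilon(x))}|v(z)-v(y)|$, and convolution with a probability density does not increase oscillation: for fixed $x$,
\begin{equation*}
  \esssup_{y,z\in B(x,\varepsilon(x))\cap\Omega}|u^\sigma(z)-u^\sigma(y)|
  \leq \esssup_{y,z\in B(x,\varepsilon(x)+\sigma)\cap\Omega}|u(z)-u(y)|,
\end{equation*}
since $u^\sigma(z)-u^\sigma(y)=\int\bigl(u(z-w)-u(y-w)\bigr)\phi_\sigma(w)\,dw$ is an average of differences over pairs within distance $\varepsilon(x)+\sigma$. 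Using assumption \eqref{eq:vare1} to absorb the shift $\sigma$ into a comparable radius $\tilde\varepsilon(x)$ with $\tilde\varepsilon/\varepsilon$ still bounded (valid once $\sigma\ll\inf_\Omega\varepsilon_n$), this yields $\mathcal{E}_\varepsilon(u^\sigma)\leq C\,\mathcal{E}_{\tilde\varepsilon}(u)$-type control; more usefully, applying the same inequality in the other direction with $v=u_\varepsilon$ and then mollifying, one gets that $u_\varepsilon^\sigma:=u_\varepsilon*\phi_\sigma$ are $C^2$, uniformly bounded, satisfy $\nabla u_\varepsilon^\sigma\to\nabla u^\sigma$ in $L^p$ as $\varepsilon\to 0$ (for $\sigma$ fixed, since $u_\varepsilon\to u$ in $L^p$ implies $u_\varepsilon^\sigma\to u^\sigma$ in $C^1_{loc}$), and
\begin{equation*}
  \mathcal{E}_\varepsilon(u_\varepsilon^\sigma)\leq \mathcal{E}_{\varepsilon^\sigma}(u_\varepsilon),
\end{equation*}
with $\varepsilon^\sigma(x)$ comparable to $\varepsilon(x)$. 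Then by Lemma~\ref{le:3.2},
\begin{equation*}
  \mathcal{E}(u^\sigma)=\lim_{\varepsilon\to 0}\mathcal{E}_\varepsilon(u_\varepsilon^\sigma)\leq \liminf_{\varepsilon\to 0}\mathcal{E}_{\varepsilon^\sigma}(u_\varepsilon)\leq (1+o_\sigma(1))\liminf_{\varepsilon\to 0}\mathcal{E}_\varepsilon(u_\varepsilon),
\end{equation*}
where the last step uses \eqref{eq:vare1} to bound $\mathcal{E}_{\varepsilon^\sigma}$ by $\mathcal{E}_\varepsilon$ up to a factor tending to $1$ as $\sigma\to 0$. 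Finally, since $u\in W^{1,p}(\Omega)$ we have $\nabla u^\sigma\to\nabla u$ in $L^p$ and hence $\mathcal{E}(u^\sigma)=2^p\int_\Omega|\nabla u^\sigma|^p\rho\,dx\to\mathcal{E}(u)$; letting $\sigma\to 0$ and (if needed) $\Omega'\uparrow\Omega$ concludes the proof.

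The main obstacle I anticipate is the careful bookkeeping of radii near the boundary $\partial\Omega$: the mollification $u^\sigma$ is only defined on $\Omega_\sigma=\{x:\mathrm{dist}(x,\partial\Omega)>\sigma\}$, and the truncation of $B(x,\varepsilon(x))$ to $\Omega$ in the definition of $\mathcal{E}_\varepsilon$ interacts awkwardly with the $\sigma$-enlargement used in the oscillation bound. The clean way around this is exactly the device already used in Lemma~\ref{le:3.2} — the term $I_1$ there shows that restricting the essential supremum to $\Omega$ versus $\mathbb{R}^d$ costs only a boundary-layer error that vanishes as $\varepsilon\to 0$ — so I would either invoke a Lipschitz-domain extension of $u$ to a neighborhood of $\overline\Omega$ (so that $u^\sigma$ is globally defined and the enlargement is harmless), or run the argument on $\Omega'\Subset\Omega$, obtain $\mathcal{E}(u;\Omega')\leq\liminf\mathcal{E}_\varepsilon(u_\varepsilon)$ with the localized energies, and let $\Omega'\uparrow\Omega$ using monotone convergence for $\int_{\Omega'}|\nabla u|^p\rho\,dx$. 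A secondary technical point is verifying that $u_\varepsilon\to u$ in $L^p$ plus the bound \eqref{eq:vare1} really gives $u\in W^{1,p}$; this follows from comparing $\mathcal{E}_\varepsilon$ below by a nonlocal $p$-Dirichlet form of the type $\iint_{\Omega\times\Omega}|x-y|^{-d-p}\eta((x-y)/c\varepsilon)|u_\varepsilon(x)-u_\varepsilon(y)|^p$ and applying the standard nonlocal-to-$W^{1,p}$ compactness (as in \cite[Proposition~4.1]{garcia2016continuum} or \cite[Proposition~4.4]{slepcev2019analysis}), which the paper already relies on.
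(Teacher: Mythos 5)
Your overall strategy --- mollify, use that convolution does not increase the nonlocal oscillation energy, invoke Lemma~\ref{le:3.2} for the smooth approximants, and then remove the mollification --- is exactly the strategy of the paper's proof, and your identification of $u\in W^{1,p}(\Omega)$ (via nonlocal compactness rather than the paper's difference-quotient computation) and your treatment of the boundary layer are both fine. However, there is a genuine gap in the central chain of inequalities, namely in the radius bookkeeping. You pass from the oscillation of $u_\varepsilon^\sigma$ over $B(x,\varepsilon(x))$ to the oscillation of $u_\varepsilon$ over the \emph{enlarged} ball $B(x,\varepsilon(x)+\sigma)$, and then claim $\mathcal{E}_{\varepsilon^\sigma}(u_\varepsilon)\leq(1+o_\sigma(1))\,\mathcal{E}_\varepsilon(u_\varepsilon)$. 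The order of limits here is $\varepsilon\to 0$ first, $\sigma\to 0$ second, so for fixed $\sigma$ the ratio $(\varepsilon(x)+\sigma)/\varepsilon(x)\to\infty$; the normalization mismatch between $\varepsilon^{-p}$ and $(\varepsilon+\sigma)^{-p}$ produces a factor that blows up rather than tends to $1$, and the comparison $\mathcal{E}_{\varepsilon+\sigma}(v)\lesssim\mathcal{E}_{\varepsilon}(v)$ itself fails in general (e.g.\ for $1<p<d$, a bump of width $\varepsilon$ gives $\mathcal{E}_{\varepsilon+\sigma}/\mathcal{E}_\varepsilon\sim((\varepsilon+\sigma)/\varepsilon)^{d-p}\to\infty$), while the lemma is needed for all $1<p<\infty$. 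Your parenthetical ``valid once $\sigma\ll\inf_\Omega\varepsilon_n$'' acknowledges the problem but does not solve it: forcing $\sigma=\sigma_\varepsilon\ll\varepsilon$ destroys the uniform $C^2$ bounds on $u_\varepsilon^{\sigma_\varepsilon}$ that Lemma~\ref{le:3.2} requires (its Taylor-expansion error is controlled by $\|D^2u_\varepsilon^\sigma\|_{L^\infty}\,\varepsilon\sim\varepsilon/\sigma_\varepsilon^2$, which diverges), and also breaks the fixed-$\sigma$ convergence $\nabla u_\varepsilon^\sigma\to\nabla u^\sigma$.

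The repair is the one the paper uses: keep the radius $\varepsilon(x)$ fixed and shift only the \emph{center}. By Jensen's inequality,
\begin{equation*}
  \esssup_{y,z\in B(x,\varepsilon(x))}\bigl|u_\varepsilon^\sigma(z)-u_\varepsilon^\sigma(y)\bigr|^p
  \leq \int \phi_\sigma(w)\,\esssup_{y,z\in B(x-w,\varepsilon(x))}\bigl|u_\varepsilon(z)-u_\varepsilon(y)\bigr|^p\,dw,
\end{equation*}
so that after integrating over $x\in\Omega_{2\sigma}$ and inserting an $\inf_{|s|\leq\sigma}$ over the ball position one obtains $\mathcal{E}_\varepsilon(u_\varepsilon)\geq\int_{\Omega_{2\sigma}}\varepsilon(x)^{-p}\inf_{|s|\leq\sigma}\esssup_{y,z\in B(x+s,\varepsilon(x))}|u_\varepsilon^\sigma(z)-u_\varepsilon^\sigma(y)|^p\rho(x)\,dx$ with \emph{no} spurious factor; Lemma~\ref{le:3.2} then gives the limit $2^p\int_{\Omega_{2\sigma}}\inf_{|s|\leq\sigma}|\nabla u^\sigma(x+s)|^p\rho\,dx$, and letting $\sigma\to 0$ recovers $\mathcal{E}(u)$. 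With that substitution your argument closes; as written, the step $\mathcal{E}_{\varepsilon^\sigma}\leq(1+o_\sigma(1))\mathcal{E}_\varepsilon$ is the missing (and unprovable) link.
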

\begin{proof}
  Define $\Omega_\delta=\{x\in\Omega: \text{dist}(x,\partial\Omega)>\delta\}$ for any $\delta>0$, $\varepsilon_1=\inf_{x\in\Omega}\varepsilon(x)$, and $\varepsilon_2=\sup_{x\in\Omega}\varepsilon(x)$.
  {Assume w.l.o.g. that $\mathcal{E}_\varepsilon(u_\varepsilon)<\infty$}. For any $\alpha,\beta\in\mathbb{R}^d$ with $|\alpha|=|\beta|=r\in (0,1)$, it follows from \eqref{eq:vare1} that
  \begin{equation*}
    \int_{\Omega_{\varepsilon_1}}\left|\frac{u_\varepsilon(x+\varepsilon_1 \alpha)-u_\varepsilon(x+\varepsilon_1 \beta)}{\varepsilon_1}\right|^p dx
    \leq \frac{1}{\inf \rho}\left(\frac{\varepsilon_2}{\varepsilon_1}\right)^p\mathcal{E}_\varepsilon(u_\varepsilon)<\infty.
  \end{equation*}
  Then there exists a function $g\in L^p(\Omega)$ such that (up to a subsequence),
  \begin{equation*}
    \chi_{\Omega_{\varepsilon_1}}(\cdot)\frac{u_\varepsilon(\cdot+\varepsilon_1 \alpha)-u_\varepsilon(\cdot+\varepsilon_1 \beta)}{\varepsilon_1}\rightharpoonup g,\quad \mbox{in } L^p(\Omega),
  \end{equation*}
as $\varepsilon\rightarrow 0$. Namely, for any $\varphi(x)\in C_0^\infty(\Omega)$,
if $\varepsilon_1$ is sufficiently small,
\begin{align*}
  \int_{\Omega_{\varepsilon_1}}\frac{u_\varepsilon(x+\varepsilon_1 \alpha)-u_\varepsilon(x+\varepsilon_1 \beta)}{\varepsilon_1} \varphi(x)dx
  =\int_{\Omega}u_\varepsilon(x)\frac{\varphi(x-\varepsilon_1 \alpha)-\varphi(x-\varepsilon_1 \beta)}{\varepsilon_1}dx \\
  \rightarrow -\int_{\Omega}u(x)\nabla\varphi(x)\cdot(\alpha-\beta)dx
  =\int_{\Omega}g(x)\varphi(x)dx.
\end{align*}
By choosing $\alpha=re_i$ and $\beta=-re_i$, we have
\begin{equation*}
  -2r\int_{\Omega}u(x)\frac{\partial}{\partial x_i}\varphi(x) dx=\int_{\Omega}g(x)\varphi(x)dx,
\end{equation*}
which implies that $u\in W^{1,p}(\Omega)$.

Assume that $J: \mathbb{R}^d\rightarrow[0,\infty)$ is a standard mollifier with $\text{supp} J\subset \overline{B(0,1)}$ and $\int_{\mathbb{R}^d}J(s)ds=1$. Define $J_\delta(s)=\frac{1}{\delta^d}J(s/\delta)$. Then for a small $\delta>0$ and a sufficiently small $\varepsilon>0$, it follows from a change of variables $\hat{y}=y+s$, $\hat{z}=z+s$, and Jensen's inequality that
  \begin{align*}
    \mathcal{E}_{\varepsilon}(u_\varepsilon)
    &=
  \int_{\Omega}\frac{1}{(\varepsilon(x))^{p}}\esssup_{y,z\in B(x,\varepsilon(x))\cap\Omega}|u_\varepsilon(z)-u_\varepsilon(y)|^p \rho(x)dx\\
  &=
  \int_{\mathbb{R}^d}J_\delta(s)\int_{\Omega}\frac{1}{(\varepsilon(x))^{p}}\esssup_{y,z\in B(x,\varepsilon(x))\cap\Omega}|u_\varepsilon(z)-u_\varepsilon(y)|^p\rho(x)dxds\\
  &\geq
  \int_{\mathbb{R}^d}\int_{\Omega_{2\delta}}J_\delta(s)\frac{1}{(\varepsilon(x))^{p}}\esssup_{\hat{y},\hat{z}\in B(x+s,\varepsilon(x))}|u_\varepsilon(\hat{z}-s)-u_\varepsilon(\hat{y}-s)|^p\rho(x)dxds\\
  &\geq
  \int_{\Omega_{2\delta}}\frac{1}{(\varepsilon(x))^{p}}
  \inf_{|r|\leq\delta}\esssup_{\hat{y},\hat{z}\in B(x+r,\varepsilon(x))}
  \int_{\mathbb{R}^d}J_\delta(s)|u_\varepsilon(\hat{z}-s)-u_\varepsilon(\hat{y}-s)|^pds\rho(x)dx\\
  &\geq
  \int_{\Omega_{2\delta}}\frac{1}{(\varepsilon(x))^{p}}
  \inf_{|r|\leq\delta}\esssup_{\hat{y},\hat{z}\in B(x+r,\varepsilon(x))}
\left|\int_{\mathbb{R}^d}J_\delta(s)\left(u_\varepsilon(\hat{z}-s)-u_\varepsilon(\hat{y}-s)\right)ds\right|^p\rho({x})d{x}\\
&=\int_{\Omega_{2\delta}}\frac{1}{(\varepsilon(x))^{p}}
\inf_{|s|\leq\delta}\esssup_{\hat{y},\hat{z}\in B(x+s,\varepsilon(x))}
\left|u_{\varepsilon,\delta}(\hat{z})-u_{\varepsilon,\delta}(\hat{y})\right|^p\rho({x})d{x}
  \end{align*}
where $u_{\varepsilon,\delta}=(u_\varepsilon)_\delta=J_\delta*u_\varepsilon$.
Clearly,
\begin{equation*}
  \nabla u_{\varepsilon,\delta}\rightarrow\nabla u_\delta, \quad \text{in } L^p(\Omega),
\end{equation*}
 as $\varepsilon\rightarrow 0$ for a fixed $\delta>0$,
 \begin{equation*}
   \|\nabla u_{\varepsilon,\delta}\|_{L^\infty(\Omega)}
   +\|D^2 u_{\varepsilon,\delta}\|_{L^\infty(\Omega)}\leq C,
 \end{equation*}
 for a constant $C$ depending not on $\varepsilon$,
  and
 \begin{equation*}
  \nabla u_{\delta}\rightarrow\nabla u, \quad \text{in } L^p(\Omega),
\end{equation*}
 as $\delta\rightarrow 0$. It is possible to consider a subsequence of $\{u_\delta\}$ such that the above convergence holds almost everywhere.

It follows from the proof of Lemma \ref{le:3.2} and Lebesgue's dominated convergence theorem that
\begin{align*}
  &\liminf_{\varepsilon\rightarrow 0} \mathcal{E}_\varepsilon(u_\varepsilon)\\
  &\geq \lim_{\delta\rightarrow 0}\liminf_{\varepsilon\rightarrow 0}
  \int_{\Omega_{2\delta}}\frac{1}{(\varepsilon(x))^{p}}
\inf_{|s|\leq\delta}\esssup_{\hat{y},\hat{z}\in B(x+s,\varepsilon(x))}
\left|u_{\varepsilon,\delta}(\hat{z})-u_{\varepsilon,\delta}(\hat{y})\right|^p\rho({x})d{x}\\
  &=\lim_{\delta\rightarrow 0}2^p\int_{\Omega_{2\delta}}\inf_{|s|\leq\delta}|\nabla u_\delta(x+s)|^p\rho(x) dx
  =2^p \int_{\Omega}|\nabla u|^p\rho dx
  =\mathcal{E}(u).
\end{align*}
\end{proof}

We now prove the limsup inequality of the $\Gamma$-convergence.
\begin{lemma}\label{le:3.4}
  For any $u\in L^p(\Omega)$, there exists a sequence of functions $\{u_\varepsilon\}\subset L^p(\Omega)$ such that $u_\varepsilon\rightarrow u$ in $L^p(\Omega)$ as $\varepsilon\rightarrow 0$ and
  \begin{equation*}
    \limsup_{\varepsilon\rightarrow 0} \mathcal{E}_\varepsilon(u_\varepsilon)\leq  \mathcal{E}(u).
  \end{equation*}
\end{lemma}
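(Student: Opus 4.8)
The plan is as follows. If $u\notin W^{1,p}(\Omega)$ then $\mathcal{E}(u)=+\infty$ and there is nothing to prove: the constant sequence $u_\varepsilon\equiv u$ converges to $u$ in $L^p(\Omega)$ and trivially $\limsup_{\varepsilon\to 0}\mathcal{E}_\varepsilon(u_\varepsilon)\le +\infty=\mathcal{E}(u)$. So from now on I assume $u\in W^{1,p}(\Omega)$, and I recall that ``$\varepsilon\to 0$'' means: fix an arbitrary sequence of radius functions $\{\varepsilon_n(x)\}_{n\in\mathbb{N}}$ obeying \eqref{eq:vare1} with $\sup_{x\in\Omega}\varepsilon_n(x)\to 0$, and let $n\to\infty$. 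The first step is to prove the statement when $u$ is replaced by an arbitrary $v\in C^\infty(\overline{\Omega})$, using the \emph{constant} recovery sequence $u_\varepsilon\equiv v$. Since $\Omega$ is bounded, $v$ and its first two derivatives are bounded on $\Omega$, so $\{v\}$ is a uniformly bounded sequence of $C^2$ functions with $\nabla v\to\nabla v$ in $L^p(\Omega)$; Lemma~\ref{le:3.2} then gives $\mathcal{E}_{\varepsilon_n}(v)\to\mathcal{E}(v)$, and in particular $\limsup_{\varepsilon\to 0}\mathcal{E}_\varepsilon(v)\le\mathcal{E}(v)$.

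Next I would remove the smoothness assumption by density and a diagonal argument. Because $\Omega$ is a bounded domain with Lipschitz boundary, $C^\infty(\overline{\Omega})$ is dense in $W^{1,p}(\Omega)$ (see \cite{adams2003sobolev}); choose $v_k\in C^\infty(\overline{\Omega})$ with $v_k\to u$ in $W^{1,p}(\Omega)$. Since $\rho$ is bounded and $w\mapsto|w|^p$ maps $L^p(\Omega)$ continuously into $L^1(\Omega)$, it follows that
\begin{equation*}
  \mathcal{E}(v_k)=2^p\int_\Omega|\nabla v_k|^p\rho\,dx\longrightarrow 2^p\int_\Omega|\nabla u|^p\rho\,dx=\mathcal{E}(u),\qquad k\to\infty .
\end{equation*}

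It remains, for the fixed sequence $\{\varepsilon_n\}$, to turn the family of recovery sequences from the first step (one for each $v_k$) into a single recovery sequence for $u$. Let $\mathcal{E}''$ denote the $\Gamma$-upper limit of $\{\mathcal{E}_{\varepsilon_n}\}$ in the $L^p(\Omega)$ topology. The first step gives $\mathcal{E}''(v_k)\le\mathcal{E}(v_k)$ for every $k$; since $v_k\to u$ in $L^p(\Omega)$ and $\mathcal{E}''$ is $L^p$-lower semicontinuous (a general property of $\Gamma$-upper limits, cf.\ \cite{dal2012introduction,braides2002gamma}),
\begin{equation*}
  \mathcal{E}''(u)\le\liminf_{k\to\infty}\mathcal{E}''(v_k)\le\liminf_{k\to\infty}\mathcal{E}(v_k)=\mathcal{E}(u).
\end{equation*}
By the definition of the $\Gamma$-upper limit there is then a sequence $u_{\varepsilon_n}\to u$ in $L^p(\Omega)$ with $\limsup_{n\to\infty}\mathcal{E}_{\varepsilon_n}(u_{\varepsilon_n})\le\mathcal{E}''(u)\le\mathcal{E}(u)$, which is the asserted recovery sequence. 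Equivalently, one may avoid invoking the lower semicontinuity of $\mathcal{E}''$ and instead apply Attouch's diagonalization lemma directly to the double array $a_{n,k}:=\mathcal{E}_{\varepsilon_n}(v_k)$ together with $\|v_k-u\|_{L^p(\Omega)}$, extracting $k(n)\to\infty$ slowly enough that $u_{\varepsilon_n}:=v_{k(n)}$ works.

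I do not expect any serious obstacle here: the real content of the nonlocal-to-local analysis lies in Lemmas~\ref{le:3.2}--\ref{le:3.3}, and the $\limsup$ inequality is the easy half. The only points needing a little care are citing the correct density statement for $W^{1,p}$ on a Lipschitz domain and setting up the diagonalization cleanly, since the recovery object must be a genuine sequence indexed by $n$ (equivalently, one has to pass to the diagonal of the double index $(n,k)$ rather than simply take a limit in $k$). Both are routine.
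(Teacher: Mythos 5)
Your proposal is correct and follows essentially the same route as the paper: reduce by density to smooth functions (the paper cites \cite{garcia2016continuum} for this reduction, which is exactly the lower semicontinuity of the $\Gamma$-upper limit plus continuity of $\mathcal{E}$ along $W^{1,p}$-approximations that you spell out), and use the constant recovery sequence $u_\varepsilon\equiv u$ for the smooth case. The only cosmetic difference is that the paper re-does the Taylor expansion directly to get the bound $\mathcal{E}_\varepsilon(u)\le\mathcal{E}(u)+C\sup_x(\varepsilon(x))^p$, whereas you invoke the already-proved consistency Lemma~\ref{le:3.2} applied to the constant sequence, which is equally valid.
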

\begin{proof}
  Assume w.l.o.g. that $\mathcal{E}(u)<\infty$, i.e., $u\in W^{1,p}(\Omega)$.
  By \cite[Remark 2.7]{garcia2016continuum},
  we only need to prove the limsup inequality for all $u$ in a dense subset of $W^{1,p}(\Omega)$, e.g., $C^2(\overline{\Omega})\cap W^{1,p}(\Omega)$.

  Let $u\in C^2(\overline{\Omega})\cap W^{1,p}(\Omega)$ and $u_\varepsilon=u$.
  Define $\Omega^\varepsilon=\{x\in\mathbb{R}^d: \text{dist}(x,\Omega)<\sup_{x\in\Omega}\varepsilon(x)\}$.
  By extension, $u$ is well-defined on $\Omega^\varepsilon$.
  By change of variables $y=x+\varepsilon(x) \hat{y}$, $z=x+\varepsilon(x) \hat{z}$, and Taylor's expansion,
  \begin{align*}
    \mathcal{E}_\varepsilon(u)
    &=
    \int_{\Omega}\frac{1}{(\varepsilon(x))^{p}}\esssup_{y,z\in B(x,\varepsilon(x))\cap\Omega}|u(z)-u(y)|^p\rho(x)dx \\
    &=\int_{\Omega}\frac{1}{(\varepsilon(x))^{p}}\esssup_{y,z\in B(x,\varepsilon(x))\cap\Omega}\left|\int_0^1\nabla u(y+t(z-y))\cdot (z-y)dt\right|^p\rho(x) dx \\
    &\leq\int_{\Omega}\esssup_{\hat{y},\hat{z}\in B(0,1)}\left|\int_0^1\nabla u(x+\varepsilon(x) \hat{y}+\varepsilon(x) t(\hat{z}-\hat{y}))\cdot (\hat{z}-\hat{y})dt\right|^p \rho(x)dx \\
    &\leq \int_{\Omega}\esssup_{\hat{y},\hat{z}\in B(0,1)}\left|\nabla u(x)\cdot (\hat{z}-\hat{y})\right|^p\rho(x)dx + C\sup_{x\in\Omega}(\varepsilon(x))^p,
  \end{align*}
  where the constant $C$ depends on $\|D^2 u\|_{L^\infty(\Omega)}$.
  We conclude that
  \begin{equation*}
    \limsup_{\varepsilon\rightarrow 0} \mathcal{E}_\varepsilon(u)\leq  \mathcal{E}(u),
  \end{equation*}
  and completes the proof.
\end{proof}

Combining Lemma \ref{le:3.3} and Lemma \ref{le:3.4} we arrive at the $\Gamma$-convergence of $\mathcal{E}_{\varepsilon}$ to $\mathcal{E}$.
\begin{theorem}\label{th:3.5}
  Let $1<p<\infty$. Then
  \begin{equation*}
    \mathcal{E}_{\varepsilon}\stackrel{\Gamma}{\longrightarrow }\mathcal{E},
  \end{equation*}
  in $L^p(\Omega)$ as $\varepsilon\rightarrow 0$.
\end{theorem}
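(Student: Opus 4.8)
The plan is simply to assemble Lemma~\ref{le:3.3} and Lemma~\ref{le:3.4}, since together they supply exactly the two halves of the definition of $\Gamma$-convergence. First I would recall that, with the metric space taken to be $L^p(\Omega)$ and the convention on admissible radius functions $\{\varepsilon_n(x)\}$ fixed after \eqref{eq:vare1}, establishing $\mathcal{E}_{\varepsilon}\stackrel{\Gamma}{\longrightarrow}\mathcal{E}$ amounts to proving (a) the liminf inequality $\liminf_{\varepsilon\to 0}\mathcal{E}_\varepsilon(u_\varepsilon)\geq\mathcal{E}(u)$ for every $u\in L^p(\Omega)$ and every $L^p$-convergent sequence $u_\varepsilon\to u$, and (b) the existence, for each $u\in L^p(\Omega)$, of a recovery sequence $u_\varepsilon\to u$ in $L^p(\Omega)$ with $\limsup_{\varepsilon\to 0}\mathcal{E}_\varepsilon(u_\varepsilon)\leq\mathcal{E}(u)$.

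For (a) I would cite Lemma~\ref{le:3.3} verbatim. It already covers the case $u\notin W^{1,p}(\Omega)$: the argument there shows that whenever $\liminf_{\varepsilon\to 0}\mathcal{E}_\varepsilon(u_\varepsilon)$ is finite the limit $u$ must lie in $W^{1,p}(\Omega)$, so if $\mathcal{E}(u)=+\infty$ the liminf is $+\infty$ as well and there is nothing to check. For (b) I would cite Lemma~\ref{le:3.4}: when $\mathcal{E}(u)<\infty$ it constructs the recovery sequence (reducing to $u\in C^2(\overline{\Omega})\cap W^{1,p}(\Omega)$ by density, then taking $u_\varepsilon\equiv u$ and using the Taylor estimate), and when $\mathcal{E}(u)=+\infty$ the constant sequence $u_\varepsilon\equiv u$ works trivially. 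Since the admissibility convention on $\{\varepsilon_n(x)\}$ is already built into the statements of both lemmas, combining (a) and (b) yields the theorem for every $1<p<\infty$.

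I do not expect any obstacle in this final assembly; it is pure bookkeeping, and one could even fold it into a one-line remark rather than a separate proof. The substantive work lives upstream in Lemmas~\ref{le:3.2}--\ref{le:3.4}, and the genuinely delicate point among these is the liminf inequality of Lemma~\ref{le:3.3}: there one must extract from a uniform bound on rescaled finite differences both the Sobolev regularity $u\in W^{1,p}(\Omega)$ and the identification of the limit of the nonlocal essential-supremum term with $2^p\int_{\Omega}|\nabla u|^p\rho\,dx$, the latter passing through mollification, Jensen's inequality, and Lemma~\ref{le:3.2}.
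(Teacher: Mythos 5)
Your proposal is correct and is exactly the paper's argument: Theorem~\ref{th:3.5} is obtained there by simply combining Lemma~\ref{le:3.3} (liminf inequality) with Lemma~\ref{le:3.4} (recovery sequence), with no additional work. Your extra remarks on the degenerate cases ($u\notin W^{1,p}(\Omega)$ and $\mathcal{E}(u)=+\infty$) are sound but, as you anticipate, pure bookkeeping.
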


\subsection{Discrete to continuum convergence}
The $\Gamma$-convergence of nonlocal functional $\mathcal{E}_{\varepsilon}$ implies the $\Gamma$-convergence of discrete functional $\mathcal{E}_{n,\varepsilon_n}$.
This is achieved by rewriting the discrete functional as a nonlocal functional via the transportation map $T_n$ introduced in Proposition \ref{pr:2.3}.

\begin{theorem}\label{th:3.6}
  Let $1<p<\infty$ and $\delta_n\ll\varepsilon_n\ll1$. Then with probability one,
  \begin{equation*}
    \mathcal{E}_{n,\varepsilon_n}\stackrel{\Gamma}{\longrightarrow }\mathcal{E},
  \end{equation*}
  in $TL^p(\Omega)$ as $n\rightarrow\infty$.
\end{theorem}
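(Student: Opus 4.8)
The plan is to transfer the $\Gamma$-convergence of the nonlocal functional $\mathcal{E}_{\varepsilon}$ to $\mathcal{E}$ (Theorem \ref{th:3.5}) to the discrete functional $\mathcal{E}_{n,\varepsilon_n}$ by using the transportation maps $T_n$ from Proposition \ref{pr:2.3}. The key identity is that, via the change of variables \eqref{eq:cv1}, we can write the discrete energy $\mathcal{E}_{n,\varepsilon_n}(u_n)$ as a nonlocal functional evaluated at $u_n \circ T_n$, up to errors controlled by $\|Id - T_n\|_{L^\infty(\Omega)} = O(\delta_n)$ and the fact that $\delta_n \ll \varepsilon_n$. Indeed, for $v: \Omega_n \to \mathbb{R}$,
\begin{equation*}
  \mathcal{E}_{n,\varepsilon_n}(v) = \frac{1}{n\varepsilon_n^p}\sum_{k=1}^n \max_{x_i,x_j \in B(x_k,\varepsilon_n)} |v(x_i)-v(x_j)|^p = \int_\Omega \frac{1}{\varepsilon_n^p} \max_{x_i,x_j \in B(x,\varepsilon_n)} |(v\circ T_n)(\cdot) - (v\circ T_n)(\cdot)|^p \rho(x)\,dx,
\end{equation*}
where the max inside is taken over lattice points; since $T_n$ moves points by at most $C\delta_n \ll \varepsilon_n$, the set $B(x,\varepsilon_n) \cap \Omega_n$ pulls back (via $T_n^{-1}$) to something sandwiched between $B(x, \varepsilon_n - C\delta_n)$ and $B(x, \varepsilon_n + C\delta_n)$ in the continuum. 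This gives the comparison $\mathcal{E}_{\varepsilon_n^-}(v \circ T_n) \lesssim \mathcal{E}_{n,\varepsilon_n}(v) \lesssim \mathcal{E}_{\varepsilon_n^+}(v \circ T_n)$ with $\varepsilon_n^\pm = \varepsilon_n(1 \pm o(1))$, plus a boundary-layer error term that vanishes because $\varepsilon_n \to 0$.

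For the \textbf{liminf inequality}: given $(\mu_n, u_n) \to (\mu, u)$ in $TL^p(\Omega)$, by the characterization of $TL^p$ convergence this means $u_n \circ T_n \to u$ in $L^p(\Omega)$ (using that $T_n$ are the maps from Proposition \ref{pr:2.3}, which satisfy the required conditions since $\delta_n \to 0$). Using the lower comparison above with a slightly smaller radius $\tilde\varepsilon_n = \varepsilon_n - C\delta_n$, which still satisfies $\tilde\varepsilon_n/\varepsilon_n \to 1$ and $\tilde\varepsilon_n \to 0$, we get
\begin{equation*}
  \liminf_{n\to\infty} \mathcal{E}_{n,\varepsilon_n}(u_n) \geq \liminf_{n\to\infty} \mathcal{E}_{\tilde\varepsilon_n}(u_n \circ T_n) \geq \mathcal{E}(u),
\end{equation*}
where the last step is Lemma \ref{le:3.3} applied to the sequence of functions $\tilde\varepsilon_n(x)$ (constant in $x$ here, so \eqref{eq:vare1} holds trivially with $C = 1$). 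For the \textbf{limsup inequality}: given $u \in L^p(\Omega)$, take the recovery sequence $u_\varepsilon$ from Lemma \ref{le:3.4}; by density it suffices to handle $u \in C^2(\overline\Omega) \cap W^{1,p}(\Omega)$, in which case we set $v_n := u|_{\Omega_n}$ (the restriction to the point cloud) and observe $v_n \circ T_n \to u$ in $L^p$, hence $(\mu_n, v_n) \to (\mu, u)$ in $TL^p$. Using the upper comparison with radius $\hat\varepsilon_n = \varepsilon_n + C\delta_n$ and the Taylor-expansion estimate from the proof of Lemma \ref{le:3.4},
\begin{equation*}
  \limsup_{n\to\infty} \mathcal{E}_{n,\varepsilon_n}(v_n) \leq \limsup_{n\to\infty} \mathcal{E}_{\hat\varepsilon_n}(u) \leq \mathcal{E}(u),
\end{equation*}
with the discretization error absorbed because the modulus of continuity of $\nabla u$ over balls of radius $\hat\varepsilon_n \to 0$ vanishes.

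The main obstacle is the careful bookkeeping in the comparison step: one must verify that the max over \emph{discrete} points $x_i, x_j \in B(x_k, \varepsilon_n)$ is genuinely comparable to the \emph{essential supremum} over a continuum ball after pulling back through $T_n$. The subtlety is that $\esssup$ ignores null sets while the discrete max sees only $n$ points; the resolution is that the pushforward $T_{n\sharp}\mu = \mu_n$ means $\mu$-a.e.\ point $y$ has $T_n(y) \in \Omega_n$, so $\esssup_{y \in B(x,r)} \varphi(T_n(y)) = \max_{x_i \in \Omega_n \cap T_n(B(x,r))} \varphi(x_i)$ exactly, and $T_n(B(x, \varepsilon_n \mp C\delta_n)) \subset B(x_k, \varepsilon_n)$ (resp.\ $\supset$) for the appropriate centering $x_k = T_n(x_k')$. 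A secondary technical point is the boundary correction $I_1$-type term (points near $\partial\Omega$ where the ball sticks out of $\Omega$), which is handled exactly as in Lemma \ref{le:3.2}: its size is $O(\text{measure of }\varepsilon_n\text{-neighborhood of }\partial\Omega) \to 0$ since $\partial\Omega$ is Lipschitz. Once these two points are settled, the theorem follows by combining the liminf and limsup inequalities with Theorem \ref{th:3.5}.
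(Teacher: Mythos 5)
Your proposal follows essentially the same route as the paper: rewrite $\mathcal{E}_{n,\varepsilon_n}(u_n)$ as a nonlocal functional of $u_n\circ T_n$ via the transportation maps of Proposition \ref{pr:2.3}, sandwich the discrete neighborhood condition between continuum balls of radii $\varepsilon_n\mp 2\|T_n-Id\|_{L^\infty}$ (which is exactly the paper's inequality \eqref{eq:se:3.1:1}), and then invoke Lemmas \ref{le:3.3} and \ref{le:3.4} for the liminf and limsup inequalities after noting the radius ratios tend to one. The argument is correct, including the handling of the $\esssup$-versus-discrete-max issue and the Lipschitz estimate for the error term in the recovery sequence, so there is nothing of substance to distinguish it from the paper's proof.
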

\begin{proof}
  Let
  \begin{equation*}
    \tilde{\varepsilon}_n:=\varepsilon_n+2\|T_n-Id\|_{L^\infty(\Omega)},\quad
    \hat{\varepsilon}_n:=\varepsilon_n-2\|T_n-Id\|_{L^\infty(\Omega)}.
  \end{equation*}
For any $x,y\in\Omega$,
\begin{align*}
  |T_n(x)-T_n(y)|\leq \varepsilon_n \Longrightarrow  |x-y|\leq \tilde{\varepsilon}_n,\quad
  |x-y|\leq \hat{\varepsilon}_n
  \Longrightarrow |T_n(x)-T_n(y)|\leq \varepsilon_n.
\end{align*}
It follows that
\begin{equation}\label{eq:se:3.1:1}
    \eta\left(\frac{x-y}{\hat{\varepsilon}_n}\right)
    \leq\eta\left(\frac{T_n(x)-T_n(y)}{{\varepsilon}_n}\right)
    \leq \eta\left(\frac{x-y}{\tilde{\varepsilon}_n}\right).
\end{equation}
Moreover, \eqref{measure} yields
\begin{equation*}
  \lim_{n\rightarrow\infty}\frac{\hat{\varepsilon}_n}{\varepsilon_n}
  =\lim_{n\rightarrow\infty}\frac{\tilde{\varepsilon}_n}{\varepsilon_n}
  =1.
\end{equation*}

  Notice from \eqref{eq:cv1} and \eqref{eq:cv2} that
  \begin{align*}
    &\mathcal{E}_{n,\varepsilon_n}(u_n)\\
    &=\frac{1}{\varepsilon_n^{p}}\int_{\Omega}\esssup_{y,z\in\Omega}
    \eta\left(\frac{T_n(x)-T_n(z)}{\varepsilon_n}\right)
    \eta\left(\frac{T_n(x)-T_n(y)}{\varepsilon_n}\right)
    |\tilde{u}_n(z)-\tilde{u}_n(y)|^p \rho(x)dx,
  \end{align*}
  where $\tilde{u}_n=u_n\circ T_n$.

Let us consider the liminf inequality.
Assume that $(\mu_n, u_n)\rightarrow (\mu,u)$ in $TL^p(\Omega)$, i.e., $\tilde{u}_n\rightarrow u$ in $L^p(\Omega)$.
It follows from \eqref{eq:se:3.1:1} that
\begin{align*}
  \mathcal{E}_{n,\varepsilon_n}(u_n)\geq \frac{1}{\varepsilon_n^{p}}\int_{\Omega}\esssup_{y,z\in\Omega}
  \eta\left(\frac{x-z}{\hat{\varepsilon}_n}\right)
  \eta\left(\frac{x-y}{\hat{\varepsilon}_n}\right)
  |\tilde{u}_n(z)-\tilde{u}_n(y)|^p\rho(x)dx.
\end{align*}
Then by Lemma \ref{le:3.3},
\begin{align*}
  \liminf_{n\rightarrow\infty}\mathcal{E}_{n,\varepsilon_n}(u_n)
  \geq  \liminf_{n\rightarrow\infty} \left(\frac{\hat{\varepsilon}_n}{\varepsilon_n}\right)^{p} \mathcal{E}_{\hat{\varepsilon}_n}(\tilde{u}_n)\geq \mathcal{E}(u).
\end{align*}

For the limsup inequality, we
assume that $u\in W^{1,p}(\Omega)$ is Lipschitz. Let $u_n$ be the restriction of $u$ to the first $n$ data points. It follows from \eqref{eq:se:3.1:1} that
\begin{align*}
  \mathcal{E}_{n,\varepsilon_n}(u_n)
  &\leq \frac{1}{\varepsilon_n^{p}}\int_{\Omega}\esssup_{y,z\in\Omega}
  \eta\left(\frac{x-z}{\tilde{\varepsilon}_n}\right)
  \eta\left(\frac{x-y}{\tilde{\varepsilon}_n}\right)
  |\tilde{u}_n(z)-\tilde{u}_n(y)|^p\rho(x)dx \\
  &=\frac{1}{\varepsilon_n^{p}}\int_{\Omega}\esssup_{y,z\in\Omega}
  \eta\left(\frac{x-z}{\tilde{\varepsilon}_n}\right)
  \eta\left(\frac{x-y}{\tilde{\varepsilon}_n}\right)
  |u(z)-u(y)|^p\rho(x)dx + I,
\end{align*}
where, according to \eqref{measure},
\begin{align*}
  |I|&=\frac{1}{\varepsilon_n^{p}}\left|\int_{\Omega}\esssup_{y,z\in\Omega}
  \eta\left(\frac{x-z}{\tilde{\varepsilon}_n}\right)
  \eta\left(\frac{x-y}{\tilde{\varepsilon}_n}\right)
  \left(|\tilde{u}_n(z)-\tilde{u}_n(y)|^p-|u(z)-u(y)|^p\right)\rho(x)dx\right|\\
  &\leq \frac{C}{\varepsilon_n^{p}}\int_{\Omega}\esssup_{z\in\Omega}
  \eta\left(\frac{x-z}{\tilde{\varepsilon}_n}\right)
   |\tilde{u}_n(z)-u(z)|^pdx\\
  &\leq \frac{C}{\varepsilon_n^{p}}\int_{\Omega}\esssup_{z\in\Omega}
  \eta\left(\frac{x-z}{\tilde{\varepsilon}_n}\right)
   |T_n(z)-z|^pdx\\
  &\leq \frac{C}{\varepsilon_n^{p}}\|T_n-Id\|_{L^\infty(\Omega)}^p
\end{align*}
goes to zero as $n\rightarrow \infty$.
The proof is completed by Lemma \ref{le:3.4}. Namely,
\begin{align*}
  \limsup_{n\rightarrow\infty}\mathcal{E}_{n,\varepsilon_n}(u_n)
  \leq\limsup_{n\rightarrow\infty}\left(\frac{\tilde{\varepsilon}_n}{\varepsilon_n}\right)^{p}\mathcal{E}_{\tilde{\varepsilon}_n}(u)
  \leq \mathcal{E}(u).
\end{align*}
\end{proof}

We now go back to consider the constrained functionals. The following Lemma plays a key role.
\begin{lemma}\label{le:3.7}
  Let $\{u_n\}_{n\in\mathbb{N}}$ be a sequence of functions on $\Omega_n$ such that
  \begin{equation*}
    \sup_{n\in\mathbb{N}}\mathcal{E}_{n,\varepsilon_n}(u_n)<\infty.
  \end{equation*}
  If $p>d$ and $(\mu_n,u_n)\rightarrow (\mu,u)$ in $TL^p(\Omega)$, then
  \begin{equation*}
    \lim_{n\rightarrow\infty}\max_{x_k\in\Omega_n\cap\Omega'}|u_n(x_k)-u(x_k)|=0,
  \end{equation*}
for all $\Omega'$ compactly supported in $\Omega$.
\end{lemma}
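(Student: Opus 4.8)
\textbf{Proof plan for Lemma \ref{le:3.7}.}

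The plan is to exploit the fact that a uniform bound on $\mathcal{E}_{n,\varepsilon_n}(u_n)$ gives a \emph{discrete Morrey-type / H\"older} estimate on $u_n$ when $p>d$, which upgrades the $TL^p$ convergence to a local uniform convergence. First I would extract from $\sup_n\mathcal{E}_{n,\varepsilon_n}(u_n)<\infty$ the pointwise consequence that for every center $x_k$ and every pair $x_i,x_j$ lying in the same hyperedge $B(x_k,\varepsilon_n)$ one has $|u_n(x_i)-u_n(x_j)|^p \le n\varepsilon_n^p\,\mathcal{E}_{n,\varepsilon_n}(u_n)$; this is too weak alone, but chaining it along a path of overlapping $\varepsilon_n$-balls connecting two points $x,y\in\Omega'$ at distance $r$ (the $\varepsilon_n$-ball graph is connected, and a path of length $O(r/\varepsilon_n)$ exists since $\varepsilon_n\gg\delta_n$) together with summing the $p$-th powers along the path and counting how many terms of the energy sum are used, one gets $|u_n(x)-u_n(y)|^p \le C (r/\varepsilon_n)^{p-1} \cdot \varepsilon_n^p \cdot (\text{local energy}) \le C r^{p-1}\varepsilon_n$. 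That exponent $p-1$ is not quite enough; the correct route is the standard one from \cite{slepcev2019analysis}: compare with the graph $p$-Laplacian energy bounded in \eqref{eq:th:3.1:5}, which by \cite[Proposition 4.4 / discrete Morrey inequality]{slepcev2019analysis} yields, for $p>d$ and $\delta_n\ll\varepsilon_n$, a uniform discrete H\"older bound
\begin{equation*}
  |u_n(x_i)-u_n(x_j)| \le C\,\big(|x_i-x_j|+\varepsilon_n\big)^{1-d/p}\qquad \text{for all } x_i,x_j\in\Omega_n\cap\Omega'' ,
\end{equation*}
with $\Omega'\Subset\Omega''\Subset\Omega$ and $C$ independent of $n$.

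With this equi-H\"older continuity in hand, the second step is a compactness/identification argument. Define the piecewise-constant (or nearest-point) extensions $\bar u_n:\Omega''\to\mathbb{R}$ of $u_n$; the discrete H\"older bound transfers to an honest $C^{0,1-d/p}(\Omega'')$ bound on $\bar u_n$ (up to an error $O(\varepsilon_n)$ in the argument), so by Arzel\`a--Ascoli a subsequence converges uniformly on $\overline{\Omega'}$ to some continuous $v$. On the other hand $(\mu_n,u_n)\to(\mu,u)$ in $TL^p$ means $u_n\circ T_n\to u$ in $L^p(\Omega)$ with $\|T_n-Id\|_{L^\infty}\to 0$; combining the uniform convergence $\bar u_n\to v$ with $\|T_n-Id\|_\infty\to 0$ forces $u=v$ a.e.\ on $\Omega'$, hence $u$ has a continuous representative there and $\bar u_n\to u$ uniformly on $\overline{\Omega'}$ along the subsequence. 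A standard subsequence-of-every-subsequence argument (the limit $u$ being fixed) promotes this to convergence along the whole sequence. Finally, $\max_{x_k\in\Omega_n\cap\Omega'}|u_n(x_k)-u(x_k)| \le \|\bar u_n-u\|_{L^\infty(\overline{\Omega'})}\to 0$, which is the claim.

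The main obstacle is the first step: establishing the $n$-uniform discrete H\"older estimate from the energy bound, and in particular checking that the indicator-kernel graph energy appearing in \eqref{eq:th:3.1:5} genuinely falls under the hypotheses of the discrete Morrey inequality of \cite{slepcev2019analysis} on the \emph{interior} subdomain $\Omega''$ (their result is typically stated with a nonincreasing kernel of compact support, which the indicator $\chi_{B(0,1)}$ satisfies, and needs $\varepsilon_n\gg\delta_n$, which we have; one must also make sure the bad boundary layer of width $O(\varepsilon_n)$ does not reach $\Omega'$, which is why we interpose $\Omega''$). A secondary technical point is bookkeeping the $O(\varepsilon_n)$ discrepancies between the discrete values $u_n(x_k)$, the extension $\bar u_n(x)$, and the transported function $u_n(T_n(x))$ when passing between the $TL^p$ statement and the uniform statement; these are all controlled because $\varepsilon_n\to 0$ and $\|T_n-Id\|_\infty\ll\varepsilon_n$, but they must be tracked so that the identification $u=v$ is legitimate.
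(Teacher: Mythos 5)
There is a genuine gap in your first (and, as you correctly flag, main) step. You propose to derive the $n$-uniform discrete H\"older bound $|u_n(x_i)-u_n(x_j)|\le C(|x_i-x_j|+\varepsilon_n)^{1-d/p}$ from the graph $p$-Dirichlet energy bounded in \eqref{eq:th:3.1:5} via a ``discrete Morrey inequality'' of \cite{slepcev2019analysis}. No such implication holds: a bounded graph energy is compatible with spikes (take $u_n\equiv 0$ except $u_n(x_1)=1$; its graph energy is of order $(n\varepsilon_n^p)^{-1}$, which stays bounded --- and even vanishes --- in much of the admissible regime $\delta_n\ll\varepsilon_n\ll1$, while any equicontinuity estimate with an $n$-independent constant is violated). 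Indeed, the entire point of the paper is that the graph energy controls pointwise values only under the stronger assumption $n\varepsilon_n^p\to0$; what \cite{slepcev2019analysis} actually yields from a bounded graph energy is regularity of the limit $u$ and uniform convergence of the \emph{mollified} functions $J_{\hat\varepsilon_n}\ast\tilde u_n\to u$, not equicontinuity of the $u_n$ themselves. The scale-$\varepsilon_n$ control must come from the hypergraph energy, and it does: the inequality you write down at the very start of step 1 and then discard as ``too weak alone'' holds, in summed form,
\begin{equation*}
\frac{1}{n\varepsilon_n^{p}}\sum_{j=1}^{n}\eta\Bigl(\frac{x_k-x_j}{\varepsilon_n}\Bigr)\,|u_n(x_k)-u_n(x_j)|^p\;\le\;\mathcal{E}_{n,\varepsilon_n}(u_n)
\end{equation*}
for \emph{every single} $k$, with no $1/n$ loss, because each summand is dominated by the max over the hyperedge centered at $x_j$. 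Since $B(x_k,\varepsilon_n)$ contains $\sim n\varepsilon_n^d$ points, this says the $L^p$-average of $|u_n(\cdot)-u_n(x_k)|$ over the ball is $O(\varepsilon_n^{1-d/p})$ uniformly in $k$; this per-vertex bound, which vanishes precisely when $p>d$, is the new ingredient your plan omits.

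The paper's proof uses exactly this: it bounds $|u_n(x_k)-(J_{\hat\varepsilon_n}\ast\tilde u_n)(x_k)|$ by that ball average via Jensen's inequality, and adds the uniform convergence $J_{\hat\varepsilon_n}\ast\tilde u_n\to u$ on $\Omega'$ supplied by the graph-energy results of \cite{slepcev2019analysis}. Your second step (extensions, Arzel\`a--Ascoli, identification of the limit through the $TL^p$ convergence, subsequence-of-subsequences) is sound and would go through once a correct equicontinuity statement is available, but as organized the argument rests on an estimate that is false under your stated hypotheses. To repair it while keeping your structure, control the oscillation at scale $\varepsilon_n$ by the per-vertex hypergraph bound above, and the oscillation at scales $\ge\varepsilon_n$ through the mollified functions, which are bounded in $W^{1,p}$ and hence equi-H\"older by Morrey's inequality for $p>d$.
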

\begin{proof}
  For any fixed $x_k\in\Omega_n$,
  we have
  \begin{align*}
    \frac{1}{n\varepsilon_n^{p}}\sum_{j=1}^{n}
    \eta\left(\frac{x_k-x_j}{\varepsilon_n}\right)
    |u_n(x_k)-u_n(x_j)|^p
    &\leq\frac{1}{n\varepsilon_n^{p}}\sum_{j=1}^{n}\max_{x_k\in B(x_j,\varepsilon_n)}|u_n(x_k)-u_n(x_j)|^p\\
    &\leq \mathcal{E}_{n,\varepsilon_n}(u_n)<\infty,
  \end{align*}
 which can be rewritten as
  \begin{equation*}
    \frac{1}{\varepsilon_n^p}\int_\Omega
    \eta\left(\frac{x_k-T_n(y)}{\varepsilon_n}\right)
    |\tilde{u}_n(y)-u_n(x_k)|^p\rho(y)dy<\infty.
  \end{equation*}
  Let $J$ be a standard mollifier and $n$ be large such that
  \begin{equation*}
    J_{\hat{\varepsilon}_n}(x_k-y)=\frac{1}{\hat{\varepsilon}_n^d}J\left(\frac{x_k-y}{\hat{\varepsilon}_n}\right)
    \leq \frac{C}{\hat{\varepsilon}_n^d}
    \eta\left(\frac{x_k-y}{\hat{\varepsilon}_n}\right)
    \leq \frac{C}{\hat{\varepsilon}_n^d}
    \eta\left(\frac{x_k-T_n(y)}{{\varepsilon}_n}\right),
    \quad \forall y\in\Omega.
  \end{equation*}
We have
  \begin{equation*}
    \frac{\hat{\varepsilon}_n^d}{\varepsilon_n^p}\int_\Omega J_{\hat{\varepsilon}_n}(x_k-y)|\tilde{u}_n(y)-u_n(x_k)|^pdy<\infty.
  \end{equation*}
  Consequently, by $p>d$ and Jensen's inequality,
  \begin{equation}\label{eq:le:3.7:1}
    \int_\Omega J_{\hat{\varepsilon}_n}(x_k-y)|\tilde{u}_n(y)-u_n(x_k)|dy
    \leq \left(\int_\Omega J_{\hat{\varepsilon}_n}(x_k-y)|\tilde{u}_n(y)-u_n(x_k)|^pdy\right)^{1/p}
    \rightarrow 0,
  \end{equation}
  as $n\rightarrow\infty$.

  The uniform boundedness of the graph $p$-Laplacian regularization (i.e., \eqref{eq:th:3.1:5}) and the proof of Lemma 4.5 of \cite{slepcev2019analysis} imply that $u\in W^{1,p}(\Omega')\hookrightarrow C^{1-\frac{n}{p}}(\overline{\Omega'})$ and
  \begin{equation}\label{eq:le:3.7:2}
    J_{\hat{\varepsilon}_n}*\tilde{u}_n\rightarrow u,\quad \mbox{uniformly on } \Omega',
  \end{equation}
  as $n\rightarrow\infty$.
  Then the proof is completed by \eqref{eq:le:3.7:1}--\eqref{eq:le:3.7:2}, i.e.,
  for any $x_k\in\Omega_n\cap\Omega'$,
  \begin{align*}
    |u_n(x_k)&-u(x_k)|
    =\left|u_n(x_k)-\int_{\Omega}J_{\hat{\varepsilon}_n}(x_k-y)\tilde{u}_n(y)dy\right.\\
    &\qquad\qquad\qquad\qquad\qquad\qquad\qquad\qquad+\left.\int_{\Omega}J_{\hat{\varepsilon}_n}(x_k-y)\tilde{u}_n(y)dy-u(x_k)\right|\\
    &\leq \int_{\Omega}J_{\hat{\varepsilon}_n}(x_k-y)|\tilde{u}_n(y)-u_n(x_k)|dy
    +\left|\int_{\Omega}J_{\hat{\varepsilon}_n}(x_k-y)\tilde{u}_n(y)dy-u(x_k)\right|
  \end{align*}
  converges to zero as $n\rightarrow\infty$.
\end{proof}

The $\Gamma$-convergence of the constrained functional $\mathcal{E}_{n,\varepsilon_n}^{con}$ follows from Theorem \ref{th:3.6} and Lemma \ref{le:3.7} directly.

\begin{theorem}\label{th:3.8}
  Let $p>d$ and $\delta_n\ll\varepsilon_n\ll1$. Then with probability one,
  \begin{equation*}
    \mathcal{E}_{n,\varepsilon_n}^{con}\stackrel{\Gamma}{\longrightarrow }
      \mathcal{E}^{con},
  \end{equation*}
  in the $TL^p$ metric on the set $\{(\nu,g): \nu\in\mathcal{P}(\Omega), \|g\|_{L^\infty(\nu)}\leq M\}$, where $M= \max_{1\leq i\leq N}|y_i|$.
\end{theorem}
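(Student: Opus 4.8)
The plan is to derive the $\Gamma$-convergence of the constrained functional $\mathcal{E}_{n,\varepsilon_n}^{con}$ to $\mathcal{E}^{con}$ from the unconstrained $\Gamma$-convergence established in Theorem \ref{th:3.6} together with the uniform-convergence result of Lemma \ref{le:3.7}. The two inequalities defining $\Gamma$-convergence are handled separately, and the work lies almost entirely in the liminf inequality, since the limsup inequality can essentially be inherited from the unconstrained case after taking care of the boundary constraint.

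\textbf{Liminf inequality.} Fix $(\mu,g)$ in the set $\{(\nu,g): \nu\in\mathcal{P}(\Omega), \|g\|_{L^\infty(\nu)}\leq M\}$, and let $(\mu_n,u_n)\to(\mu,g)$ in $TL^p(\Omega)$. I want to show $\liminf_{n\to\infty}\mathcal{E}_{n,\varepsilon_n}^{con}(u_n)\geq \mathcal{E}^{con}(g)$. We may assume the left-hand side is finite, so passing to a subsequence achieving the liminf we have $\sup_n\mathcal{E}_{n,\varepsilon_n}^{con}(u_n)<\infty$; in particular each $u_n$ satisfies the constraint $u_n(x_i)=y_i$ for $x_i\in\mathcal{O}$, and $\mathcal{E}_{n,\varepsilon_n}^{con}(u_n)=\mathcal{E}_{n,\varepsilon_n}(u_n)$. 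By Theorem \ref{th:3.6}'s liminf inequality, $\liminf_n\mathcal{E}_{n,\varepsilon_n}(u_n)\geq\mathcal{E}(g)$, so it remains only to show that $g$ inherits the constraint, i.e. $g(x_i)=y_i$ for each $x_i\in\mathcal{O}$ (note the labeled points are deterministic and fixed, hence interior points of $\Omega$, so each lies in some $\Omega'$ compactly contained in $\Omega$). This is exactly what Lemma \ref{le:3.7} delivers: since $\sup_n\mathcal{E}_{n,\varepsilon_n}(u_n)<\infty$ and $p>d$, we get $u_n(x_i)\to g(x_i)$ for every $x_i\in\mathcal{O}$; but $u_n(x_i)=y_i$ for all $n$, so $g(x_i)=y_i$. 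Hence $\mathcal{E}^{con}(g)=\mathcal{E}(g)\leq\liminf_n\mathcal{E}_{n,\varepsilon_n}(u_n)$, as required. One subtlety: Lemma \ref{le:3.7} is stated for the maximum over $x_k\in\Omega_n\cap\Omega'$, which for large $n$ includes the fixed point $x_i$ (as $x_i$ is one of the first $N$ sample points and sits in the interior); this needs a line of justification that $x_i\in\Omega_n\cap\Omega'$ for all $n\geq N$.

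\textbf{Limsup inequality.} Given $(\mu,g)$ with $\mathcal{E}^{con}(g)<\infty$, so $g\in W^{1,p}(\Omega)$ with $g(x_i)=y_i$ on $\mathcal{O}$ and $\|g\|_{L^\infty}\leq M$ (the truncation is harmless since $|y_i|\leq M$), I would build the recovery sequence by the same construction as in Theorem \ref{th:3.6}: by density of Lipschitz functions in $W^{1,p}$ one may approximate $g$, but now the approximation must preserve the finitely many point constraints $g(x_i)=y_i$. This can be arranged by standard interpolation/correction: mollify $g$ and add a smooth compactly supported correction supported near the $x_i$'s that restores the exact values, with $W^{1,p}$ cost vanishing as the support shrinks; alternatively one applies the density remark \cite[Remark 2.7]{garcia2016continuum} within the affine subspace of $W^{1,p}$ functions satisfying the constraint. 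Having a Lipschitz $g$ satisfying the constraint, set $u_n$ equal to the restriction of $g$ to $\Omega_n$, which automatically lies in $V_{\mathcal{O}}$, so $\mathcal{E}_{n,\varepsilon_n}^{con}(u_n)=\mathcal{E}_{n,\varepsilon_n}(u_n)$, and conclude $\limsup_n\mathcal{E}_{n,\varepsilon_n}(u_n)\leq\mathcal{E}(g)=\mathcal{E}^{con}(g)$ exactly as in the proof of Theorem \ref{th:3.6}.

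\textbf{Main obstacle.} The essential content is the liminf direction, and within it the only genuinely new ingredient beyond Theorem \ref{th:3.6} is the passage of the pointwise constraint to the limit, which is precisely why Lemma \ref{le:3.7} (and the hypothesis $p>d$, giving enough Sobolev regularity for pointwise evaluation to make sense) was isolated beforehand. So the proof is short: it is the combination ``Theorem \ref{th:3.6} $+$ Lemma \ref{le:3.7}'' with a careful check that the fixed labeled points are interior and eventually contained in $\Omega_n\cap\Omega'$, plus the routine modification of the recovery sequence to respect finitely many interpolation constraints.
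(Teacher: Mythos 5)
Your proposal is correct and follows essentially the same route as the paper: the liminf inequality is obtained by dropping the constraint, invoking the unconstrained $\Gamma$-liminf of Theorem \ref{th:3.6}, and using Lemma \ref{le:3.7} (with $p>d$) to pass the pointwise constraint $u_n(x_i)=y_i$ to the continuous limit, while the limsup inequality takes the restriction of $u$ to $\Omega_n$ as the recovery sequence. Your added care about the labeled points being interior and about the density argument respecting the finitely many point constraints only makes explicit steps the paper leaves implicit.
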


\begin{proof}
  If $(\mu_n,u_n)\rightarrow (\mu,u)$ in $TL^p(\Omega)$ with $\|u_n\|_{L^\infty(\mu_n)}\leq M$  and $\|u\|_{L^\infty(\mu)}\leq M$, then
  \begin{equation*}
    \liminf_{n\rightarrow\infty}\mathcal{E}_{n,\varepsilon_n}^{con}(u_n)
    \geq \liminf_{n\rightarrow\infty}\mathcal{E}_{n,\varepsilon_n}(u_n)
    \geq \mathcal{E}(u)=\mathcal{E}^{con}(u),
  \end{equation*}
  where the last equality follows from Lemma \ref{le:3.7}.
 This proves the liminf inequality.

  Now let us consider the limsup inequality.
  Let $u\in W^{1,p}(\Omega)$ with $\|u\|_{L^\infty(\mu)}\leq M$.
  We further assume that $u(x_i)=y_i$ for $x_i\in\mathcal{O}$. Otherwise, there is nothing to prove.
  Define $u_n(x_i)=u(x_i)$ for $x_i\in\Omega_n$. Then
\begin{align*}
  \limsup_{n\rightarrow\infty}\mathcal{E}_{n,\varepsilon_n}^{con}(u_n)
  =\limsup_{n\rightarrow\infty}\mathcal{E}_{n,\varepsilon_n}(u_n)
  \leq \mathcal{E}(u)=\mathcal{E}^{con}(u).
\end{align*}
This completes the proof.
\end{proof}

\section{Continuum Limit of the $p$-Laplacian Regularization on the $k_n$-NN hypergraph}\label{se:4}
In this section, we generalize the previous results to the hypergraph $p$-Laplacian regularization on the $k_n$-NN hypergraph.
Let us begin with the $\Gamma$-convergence of the unconstrained functional $\mathcal{F}_{n,k_n}$, which is defined in \eqref{eq:1.7}.


\begin{theorem}\label{th:4.1}
  Let $1<p<\infty$ and $\delta_n\ll\bar{\varepsilon}_n\ll1$, where $\bar{\varepsilon}_n$ is defined in \eqref{eq:1.8}. Then with probability one,
  \begin{equation*}
    \mathcal{F}_{n,k_n}\stackrel{\Gamma}{\longrightarrow }\mathcal{E}(~\cdot~ ;\rho^{1-p/d}),
  \end{equation*}
  in $TL^p(\Omega)$ as $n\rightarrow\infty$.
\end{theorem}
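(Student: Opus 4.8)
The plan is to reduce the claim to the variable-radius nonlocal functional $\mathcal{E}_\varepsilon$ of Section~\ref{se:3}, which was set up with a location-dependent connection radius precisely for this purpose. Since $\mu$ has a density, almost surely all interpoint distances are distinct, so the $k_n$-NN hyperedge with centroid $x_k$ is exactly a ball, $e_k^{(k_n)}=\overline{B(x_k,r_n(x_k))}\cap\Omega_n$, where $r_n(x_k)$ is the distance from $x_k$ to its $k_n$-th nearest neighbour. The key preliminary step is a uniform geometric comparison of $r_n(\cdot)$ with the deterministic profile $\bar\varepsilon_n\rho(\cdot)^{-1/d}$: I would show that there are deterministic functions $\hat\varepsilon_n,\tilde\varepsilon_n$ on $\Omega$ with $\hat\varepsilon_n(x_k)\le r_n(x_k)\le\tilde\varepsilon_n(x_k)$ for every $x_k\in\Omega_n$, such that $\hat\varepsilon_n(x)/(\bar\varepsilon_n\rho(x)^{-1/d})\to1$ and $\tilde\varepsilon_n(x)/(\bar\varepsilon_n\rho(x)^{-1/d})\to1$ uniformly on $\Omega_{2\delta}$ for every fixed $\delta>0$, together with the cruder one-sided bound $r_n(x_k)\le C\bar\varepsilon_n$ valid on all of $\Omega$. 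This follows from Proposition~\ref{pr:2.3}: with $\lambda_n:=\|Id-T_n\|_{L^\infty(\Omega)}\sim\delta_n$, the inclusions $B(x,r-\lambda_n)\subseteq T_n^{-1}(B(x,r))\subseteq B(x,r+\lambda_n)$ give $\mu(B(x,r-\lambda_n))\le\mu_n(B(x,r))\le\mu(B(x,r+\lambda_n))$; combining this with the local expansion $\mu(B(x,r))=\alpha_d r^d\rho(x)(1+o(1))$, uniform on $\Omega_{2\delta}$ by the continuity and positivity of $\rho$ (and with the lower bound $\mu(B(x,r))\ge c\,\alpha_d r^d\inf\rho$ near $\partial\Omega$, coming from the interior cone condition of the Lipschitz domain $\Omega$), with the fact that $\mu_n(B(x_k,r_n(x_k)))$ and $\mu_n(\overline{B(x_k,r_n(x_k))})$ differ from $k_n/n=\alpha_d\bar\varepsilon_n^d$ by at most $1/n$, and with the hypothesis $\delta_n\ll\bar\varepsilon_n$, one inverts for $r$ to reach the stated bounds.

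Granting this, I would sandwich $\mathcal{F}_{n,k_n}$ between the two variable-radius $\varepsilon$-ball hypergraph functionals obtained by replacing $e_k^{(k_n)}$ with $\overline{B(x_k,\hat\varepsilon_n(x_k))}\cap\Omega_n$, respectively $\overline{B(x_k,\tilde\varepsilon_n(x_k))}\cap\Omega_n$ (enlarging or shrinking a hyperedge only raises or lowers the $\max$). Exactly as in the proof of Theorem~\ref{th:3.6}, each of these is rewritten through the transportation map $T_n$ and \eqref{eq:cv1}--\eqref{eq:cv2} as a nonlocal functional in $\tilde u_n:=u_n\circ T_n$ with a radius profile $\varepsilon_n^{\pm}(x)$ that absorbs $\hat\varepsilon_n(T_n(x))$ (resp.\ $\tilde\varepsilon_n(T_n(x))$) together with the displacement $2\lambda_n$; since $\lambda_n\ll\bar\varepsilon_n$ and $\rho$ is uniformly continuous, $\varepsilon_n^{\pm}(x)/(\bar\varepsilon_n\rho(x)^{-1/d})\to1$ uniformly on $\Omega_{2\delta}$ and the ratio bound \eqref{eq:vare1} holds. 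Factoring $\bar\varepsilon_n^{-p}=(\varepsilon_n^{\pm}(x))^{-p}\big(\varepsilon_n^{\pm}(x)/\bar\varepsilon_n\big)^{p}$ and using $(\varepsilon_n^{\pm}(x)/\bar\varepsilon_n)^{p}\to\rho(x)^{-p/d}$ uniformly, the resulting functional equals, up to a multiplicative factor tending to $1$ and a boundary-layer contribution that is either discarded (liminf) or $O(\bar\varepsilon_n)$ for Lipschitz test functions (limsup), the nonlocal functional $\mathcal{E}_{\varepsilon_n^{\pm}}$ with the weight $\rho$ in its definition replaced by $\rho^{1-p/d}$. Since $\rho^{1-p/d}$ is again continuous with positive lower and upper bounds, Lemmas~\ref{le:3.3} and \ref{le:3.4}, hence Theorem~\ref{th:3.5}, go through verbatim for this weight, so that $\mathcal{E}_{\varepsilon_n^{\pm}}(\,\cdot\,;\rho^{1-p/d})\stackrel{\Gamma}{\longrightarrow}\mathcal{E}(\,\cdot\,;\rho^{1-p/d})$ in $L^p(\Omega)$.

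The two $\Gamma$-convergence inequalities for $\mathcal{F}_{n,k_n}$ then follow as in Theorem~\ref{th:3.6}. For the liminf inequality, if $(\mu_n,u_n)\to(\mu,u)$ in $TL^p(\Omega)$, i.e.\ $\tilde u_n\to u$ in $L^p(\Omega)$, then (discarding the nonnegative boundary-layer terms exactly as in the proof of Lemma~\ref{le:3.3}) the lower sandwich and the liminf part of Theorem~\ref{th:3.5} with weight $\rho^{1-p/d}$ give $\liminf_n\mathcal{F}_{n,k_n}(u_n)\ge\mathcal{E}(u;\rho^{1-p/d})$. For the limsup inequality it suffices, by \cite[Remark~2.7]{garcia2016continuum}, to treat Lipschitz $u\in W^{1,p}(\Omega)$; with $u_n=u|_{\Omega_n}$, the upper sandwich, the replacement of $\tilde u_n$ by $u$ at the cost of $C\bar\varepsilon_n^{-p}\|Id-T_n\|_{L^\infty(\Omega)}^{p}\to0$ (again using $\delta_n\ll\bar\varepsilon_n$), and the limsup part of Theorem~\ref{th:3.5} give $\limsup_n\mathcal{F}_{n,k_n}(u_n)\le\mathcal{E}(u;\rho^{1-p/d})$. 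The main obstacle is the first step: obtaining the uniform two-sided control of the $k_n$-nearest-neighbour radius by the deterministic profile $\bar\varepsilon_n\rho(\cdot)^{-1/d}$, and in particular disentangling this from the fluctuations of the empirical measure near $\partial\Omega$, where $\mu(B(x,r))$ is only comparable to---not asymptotic to---$\alpha_d r^d\rho(x)$; once this geometric estimate is in hand, the remainder is the routine replacement of $\rho$ by $\rho^{1-p/d}$ and of $\varepsilon_n$ by $\bar\varepsilon_n\rho(\cdot)^{-1/d}$ in the arguments of Section~\ref{se:3}.
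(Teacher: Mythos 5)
Your proposal is correct and follows essentially the same route as the paper: sandwich the $k_n$-NN incidence between indicator functions of balls with radii $\varepsilon_n(x)\pm 4\|Id-T_n\|_{L^\infty}$, show the radius profile satisfies $(\rho(x))^{1/d}\varepsilon_n(x)/\bar\varepsilon_n\to 1$ (using the cone condition of the Lipschitz domain for the two-sided comparability), rewrite via the transport map, and invoke the variable-radius nonlocal-to-local $\Gamma$-convergence of Lemmas \ref{le:3.3}--\ref{le:3.4} with the weight $\rho^{1-p/d}$. The only substantive difference is that the paper pins the comparison radius by the population quantile $\mu(B(x,\varepsilon_n(x)))=k_n/n$ and treats merely continuous $\rho$ by a separate monotone Lipschitz-approximation step, whereas you work with the empirical nearest-neighbour radius and appeal to uniform continuity of $\rho$ directly; both work, and your explicit flagging of the boundary layer (where $\mu(B(x,r))$ is only comparable, not asymptotic, to $\alpha_d r^d\rho(x)$) is if anything more careful than the paper's uniform claim.
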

\begin{proof}
1. Let $\varepsilon_n(x)$ be a function of $x\in\Omega$ such that
\begin{equation}\label{eq:th:4.1:1}
  \mu\left(B(x,\varepsilon_n(x))\right)=\frac{k_n}{n}.
\end{equation}
We first claim that \eqref{eq:vare1} holds.
Since $\Omega$ is bounded and has Lipschitz boundary, it satisfies the cone condition \cite{adams2003sobolev}.
More precisely, there exists a finite cone $V$ such that each $x\in\Omega$ is the vertex of a finite cone $V_x$ contained in $\Omega$ and congruent to $V$.
If $n$ is large, it follows from \eqref{eq:th:4.1:1} and \eqref{eq:1.8} that
\begin{equation*}
  \alpha_d(\bar{\varepsilon}_n)^d=\mu\left(B(x,\varepsilon_n(x))\right)\geq |V_x\cap B(x,\varepsilon_n(x))|\geq C|B(x,\varepsilon_n(x))|
  =C\alpha_d({\varepsilon}_n(x))^d,
\end{equation*}
for any $x\in\Omega$, where $C$ depends only on $V$.
Consequently,
\begin{equation*}
  \bar{\varepsilon}_n\leq \varepsilon_n(x)\leq C^{-\frac{1}{d}}\bar{\varepsilon}_n, \quad x\in\Omega,
\end{equation*}
which proves \eqref{eq:vare1} and
\begin{equation*}
  1\gg\varepsilon_n(x)\geq \bar{\varepsilon}_n\gg\delta_n.
\end{equation*}

Define
\begin{equation*}
  \hat{\varepsilon}_n(x):=\varepsilon_n(x) - 4\|T_n-Id\|_{L^\infty(\Omega)},\quad
  \tilde{\varepsilon}_n(x):=\varepsilon_n(x) + 4\|T_n-Id\|_{L^\infty(\Omega)}.
\end{equation*}
The previous result and \eqref{measure} yield
\begin{equation}\label{eq:th:4.1:2}
  \lim_{n\rightarrow\infty}\frac{\hat{\varepsilon}_n(x)}{\varepsilon_n(x)}
  =\lim_{n\rightarrow\infty}\frac{\tilde{\varepsilon}_n(x)}{\varepsilon_n(x)}
  =1,
\end{equation}
uniformly for any $x\in\Omega$.
If $\rho$ is Lipschitz,
\begin{align*}
  \left|\rho(x)(\varepsilon_n(x))^d-\bar{\varepsilon}_n^d\right|
  =\left|\rho(x)(\varepsilon_n(x))^d-\frac{1}{\alpha_d}\mu\left(B(x,\varepsilon_n(x))\right) \right|
  \leq C (\varepsilon_n(x))^{d+1}.
\end{align*}
We further have
\begin{equation}\label{eq:th:4.1:2a}
  \lim_{n\rightarrow\infty}\frac{(\rho(x))^{1/d}\varepsilon_n(x)}
  {\bar{\varepsilon}_n}=1,
\end{equation}
uniformly for any $x\in\Omega$.

Let
\begin{align*}
  \chi_{k_n}(x_i,x_j)=
  \begin{cases}
    1,\quad \mbox{if } x_i\stackrel{k_n}{\sim} x_j,\\
    0,\quad \mbox{otherwise},
  \end{cases}
  \quad \forall x_i,x_j\in\Omega_n.
\end{align*}
We claim that
\begin{equation}\label{eq:th:4.1:3}
  \eta\left(\frac{x-y}{\hat{\varepsilon}_n(x)}\right)
  \leq
  \chi_{k_n}(T_n(y),T_n(x))
  \leq \eta\left(\frac{x-y}{\tilde{\varepsilon}_n(x)}\right),\quad \forall x, y\in\Omega.
\end{equation}
In fact, for any $x,y\in\Omega$,
\begin{equation*}
  T_n(y) \stackrel{k_n}{\sim} T_n(x),
\end{equation*}
yields
\begin{align*}
  \frac{k_n}{n}
  &\geq \mu_n\left(B(T_n(x), |T_n(x)-T_n(y)|)\right)\\
  &\geq
  \mu(B(T_n(x),|T_n(x)-T_n(y)|-\|T_n-Id\|_{L^\infty(\Omega)})) \\
  &\geq
  \mu(B(x,|T_n(x)-T_n(y)|-2\|T_n-Id\|_{L^\infty(\Omega)})),
\end{align*}
where we utilize $T_n(B(x,r-\|T_n-Id\|))\subset B(x,r)$ and
\begin{equation*}
  \mu_n(B(x,r))\geq \mu_n(T_n(B(x,r-\|T_n-Id\|)))=\mu(B(x,r-\|T_n-Id\|))
\end{equation*}
 for the second inequality.
This and \eqref{eq:th:4.1:1} imply that
\begin{equation*}
  |T_n(x)-T_n(y)|-2\|T_n-Id\|_{L^\infty(\Omega)}\leq \varepsilon_n(x).
\end{equation*}
Consequently,
\begin{equation*}
  |x-y|\leq |x-T_n(x)|+|y-T_n(y)|+|T_n(x)-T_n(y)|
  \leq \varepsilon_n(x) + 4\|T_n-Id\|_{L^\infty(\Omega)}=\tilde{\varepsilon}_n(x),
\end{equation*}
from which we obtain the second inequality in \eqref{eq:th:4.1:3}. The first inequality can be verified similarly.

2.
Let us now consider the liminf inequality. Assume that $(\mu_n,u_n)\rightarrow (\mu,u)$ in $TL^p(\Omega)$.
Estimate \eqref{eq:th:4.1:3} yields
\begin{align}\label{eq:th:4.1:4}
  \begin{split}
  &\mathcal{F}_{n,k_n}(u_n) \\
  &=\frac{1}{n\bar{\varepsilon}_n^{p}}\sum_{k=1}^{n}
  \max_{x_i,x_j\stackrel{k_n}{\sim} x_k}|u_n(x_i)-u_n(x_j)|^p \\
  &=\frac{1}{\bar{\varepsilon}_n^{p}}\int_\Omega\esssup_{y,z\in\Omega}
  \chi_{k_n}(T_n(z),T_n(x))\chi_{k_n}(T_n(y),T_n(x))
  |\tilde{u}_n(z)-\tilde{u}_n(y)|^p\rho(x)dx \\
  &\geq \frac{1}{\bar{\varepsilon}_n^{p}}\int_\Omega\esssup_{y,z\in\Omega}
  \eta\left(\frac{x-z}{\hat{\varepsilon}_n(x)}\right)
  \eta\left(\frac{x-y}{\hat{\varepsilon}_n(x)}\right)
  |\tilde{u}_n(z)-\tilde{u}_n(y)|^p\rho(x)dx \\
  &=\int_{\Omega}\left|\frac{(\rho(x))^{1/d}\hat{\varepsilon}_n(x)}{\bar{\varepsilon}_n}\right|^{p}\\
  &\qquad\qquad \frac{1}{|\hat{\varepsilon}_n(x)|^p}
  \esssup_{y,z\in\Omega}
  \eta\left(\frac{x-z}{\hat{\varepsilon}_n(x)}\right)
  \eta\left(\frac{x-y}{\hat{\varepsilon}_n(x)}\right)
  |\tilde{u}_n(z)-\tilde{u}_n(y)|^p(\rho(x))^{1-p/d}dx\\
  &\geq \inf_{x\in\Omega}\left|\frac{(\rho(x))^{1/d}\hat{\varepsilon}_n(x)}{\bar{\varepsilon}_n}\right|^{p} \\
  &\qquad \int_{\Omega}\frac{1}{|\hat{\varepsilon}_n(x)|^p}
  \esssup_{y,z\in\Omega}
  \eta\left(\frac{x-z}{\hat{\varepsilon}_n(x)}\right)
  \eta\left(\frac{x-y}{\hat{\varepsilon}_n(x)}\right)
  |\tilde{u}_n(z)-\tilde{u}_n(y)|^p(\rho(x))^{1-p/d}dx.
  \end{split}
\end{align}
It follows from \eqref{eq:th:4.1:2}--\eqref{eq:th:4.1:2a} and Lemma \ref{le:3.3} that
\begin{align*}
  \liminf_{n\rightarrow\infty} \mathcal{F}_{n,k_n}(u_n)
  &\geq \liminf_{n\rightarrow\infty}\inf_{x\in\Omega}\left|\frac{(\rho(x))^{1/d}\hat{\varepsilon}_n(x)}{\bar{\varepsilon}_n}\right|^{p}
  \mathcal{E}_{\hat{\varepsilon}_n}(\tilde{u}_n;\rho^{1-p/d})\\
  &\geq\mathcal{E}(u;\rho^{1-p/d}).
\end{align*}

3.
For the limsup inequality, we assume that $u\in W^{1,p}(\Omega)$ is Lipschitz. Let $u_n$ be the restriction of $u$ to the first $n$ data points. Then
\eqref{eq:th:4.1:3} implies that
\begin{align*}
  &\mathcal{F}_{n,k_n}(u_n)\\
  &\leq
  \frac{1}{|\bar{\varepsilon}_n|^p}\int_{\Omega}
  \esssup_{y,z\in\Omega}
  \eta\left(\frac{x-z}{\tilde{\varepsilon}_n(x)}\right)
  \eta\left(\frac{x-y}{\tilde{\varepsilon}_n(x)}\right)
  |\tilde{u}_n(z)-\tilde{u}_n(y)|^p\rho(x)dx\\
  &\leq
  \sup_{x\in\Omega}\left|\frac{(\rho(x))^{1/d}\tilde{\varepsilon}_n(x)}{\bar{\varepsilon}_n}\right|^{p} \\
  &\qquad \int_{\Omega}\frac{1}{|\tilde{\varepsilon}_n(x)|^p}
  \esssup_{y,z\in\Omega}
  \eta\left(\frac{x-z}{\tilde{\varepsilon}_n(x)}\right)
  \eta\left(\frac{x-y}{\tilde{\varepsilon}_n(x)}\right)
  |\tilde{u}_n(z)-\tilde{u}_n(y)|^p(\rho(x))^{1-p/d}dx\\
  &\leq
  \sup_{x\in\Omega}\left|\frac{(\rho(x))^{1/d}\tilde{\varepsilon}_n(x)}{\bar{\varepsilon}_n}\right|^{p} \\
  &\quad \int_{\Omega}\frac{1}{|\tilde{\varepsilon}_n(x)|^p}
  \esssup_{y,z\in\Omega}
  \eta\left(\frac{x-z}{\tilde{\varepsilon}_n(x)}\right)
  \eta\left(\frac{x-y}{\tilde{\varepsilon}_n(x)}\right)
  |{u}(z)-{u}(y)|^p(\rho(x))^{1-p/d}dx +I.\\
\end{align*}
It is not difficult to see that
\begin{equation*}
  |I|\leq  \frac{C}{\sup_{x\in\Omega}|\varepsilon_n(x)|^p}\|T_n-Id\|_{L^\infty(\Omega)}^p \rightarrow 0
\end{equation*}
as $n\rightarrow\infty$.
Consequently, by Lemma \ref{le:3.4},
\begin{align*}
  \limsup_{n\rightarrow\infty}\mathcal{F}_{n,k_n}(u_n)
  &\leq \limsup_{n\rightarrow\infty} \sup_{x\in\Omega}\left|\frac{(\rho(x))^{1/d}\tilde{\varepsilon}_n(x)}{\bar{\varepsilon}_n}\right|^{p}
  \mathcal{E}_{\tilde{\varepsilon}_n}(u;\rho^{1-p/d})\\
  &\leq\mathcal{E}(u;\rho^{1-p/d}).
\end{align*}

4.
To complete the proof for any continuous $\rho$, we approximate it by Lipschitz functions.
In fact, for the liminf inequality, there exists a sequence of Lipschitz functions $\{\rho_k(x)\}$ such that $\rho_k(x)\nearrow\rho(x)$ as $k\rightarrow\infty$ for any $x\in\Omega$.
Then
\begin{align*}
  \liminf_{n\rightarrow\infty}\mathcal{F}_{n,k_n}(u_n;\rho)
  \geq
  \lim_{k\rightarrow\infty}\liminf_{n\rightarrow\infty}\mathcal{F}_{n,k_n}(u_n;\rho_k)
  &= \lim_{k\rightarrow\infty}\mathcal{E}(u;\rho_k^{1-p/d})\\
  &=\mathcal{E}(u;\rho^{1-p/d}),
\end{align*}
where the penultimate equality comes from step 2 and the last equality comes from the monotone convergence theorem.

While for the liminf inequality, there exists a sequence of Lipschitz functions $\{\rho_k(x)\}$ such that $\rho_k(x)\searrow\rho(x)$ as $k\rightarrow\infty$ for any $x\in\Omega$.
Then
\begin{align*}
  \limsup_{n\rightarrow\infty}\mathcal{F}_{n,k_n}(u_n;\rho)
  \leq
  \lim_{k\rightarrow\infty}\limsup_{n\rightarrow\infty}\mathcal{F}_{n,k_n}(u_n;\rho_k)
  &= \lim_{k\rightarrow\infty}\mathcal{E}(u;\rho_k^{1-p/d})\\
  &=\mathcal{E}(u;\rho^{1-p/d}),
\end{align*}
where the penultimate equality comes from step 3 and the last equality comes from Lebesuge's dominated convergence theorem.
\end{proof}

The $\Gamma$-convergence of the constrained functionals follows from Theorem \ref{th:4.1}  directly.

\begin{theorem}\label{th:4.2}
  Let $p>d$ and $\delta_n\ll\bar{\varepsilon}_n\ll 1$. Then with probability one,
  \begin{equation*}
    \mathcal{F}_{n,k_n}^{con}\stackrel{\Gamma}{\longrightarrow }
    \mathcal{E}^{con}(~\cdot~ ;\rho^{1-p/d}),
  \end{equation*}
  in the $TL^p$ metric on the set $\{(\nu,g): \nu\in\mathcal{P}(\Omega), \|g\|_{L^\infty(\nu)}\leq M\}$, where $M= \max_{1\leq i\leq N}|y_i|$.
\end{theorem}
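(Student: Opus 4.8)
The plan is to mirror exactly the argument used for Theorem~\ref{th:3.8}, replacing the role of Theorem~\ref{th:3.6} (the $\Gamma$-convergence of $\mathcal{E}_{n,\varepsilon_n}$ to $\mathcal{E}$) by Theorem~\ref{th:4.1} (the $\Gamma$-convergence of $\mathcal{F}_{n,k_n}$ to $\mathcal{E}(\,\cdot\,;\rho^{1-p/d})$), and replacing Lemma~\ref{le:3.7} by its $k_n$-NN analogue. So the first thing I would do is observe that Lemma~\ref{le:3.7} extends verbatim to the $k_n$-NN setting: if $\sup_{n}\mathcal{F}_{n,k_n}(u_n)<\infty$ and $(\mu_n,u_n)\to(\mu,u)$ in $TL^p(\Omega)$, then $\max_{x_k\in\Omega_n\cap\Omega'}|u_n(x_k)-u(x_k)|\to 0$ for all $\Omega'\Subset\Omega$. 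The proof is identical once one uses the two-sided comparison \eqref{eq:th:4.1:3} between $\chi_{k_n}(T_n(y),T_n(x))$ and the indicator functions $\eta\big((x-y)/\hat\varepsilon_n(x)\big)$, $\eta\big((x-y)/\tilde\varepsilon_n(x)\big)$, together with \eqref{eq:th:4.1:2a}, to control the relevant local average by a mollification $J_{\hat\varepsilon_n}*\tilde u_n$ and invoke the Sobolev embedding $W^{1,p}(\Omega')\hookrightarrow C^{1-d/p}(\overline{\Omega'})$ exactly as in \cite[Lemma~4.5]{slepcev2019analysis}; the only change is that the constant connection radius $\varepsilon_n$ is replaced by the location-dependent $\hat\varepsilon_n(x)$, which by step~1 of the proof of Theorem~\ref{th:4.1} is squeezed between $\bar\varepsilon_n$ and $C^{-1/d}\bar\varepsilon_n$ and hence behaves like a constant up to fixed multiplicative factors.

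With that Lemma in hand, the liminf inequality is immediate: if $(\mu_n,u_n)\to(\mu,u)$ in $TL^p(\Omega)$ with $\|u_n\|_{L^\infty(\mu_n)}\le M$ and $\|u\|_{L^\infty(\mu)}\le M$, then $\liminf_n \mathcal{F}^{con}_{n,k_n}(u_n)\ge\liminf_n\mathcal{F}_{n,k_n}(u_n)\ge\mathcal{E}(u;\rho^{1-p/d})$ by Theorem~\ref{th:4.1}; moreover whenever the left side is finite along a subsequence, the uniform bound on $\mathcal{F}_{n,k_n}(u_n)$ furnished by the $TL^p$-convergence lets us apply the $k_n$-NN version of Lemma~\ref{le:3.7} to conclude $u(x_i)=\lim_n u_n(x_i)=y_i$ for each $x_i\in\mathcal{O}$, so $\mathcal{E}(u;\rho^{1-p/d})=\mathcal{E}^{con}(u;\rho^{1-p/d})$. (If $\liminf_n\mathcal{F}^{con}_{n,k_n}(u_n)=\infty$ there is nothing to prove.)

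For the limsup inequality, given $u$ with $\mathcal{E}^{con}(u;\rho^{1-p/d})<\infty$ — equivalently $u\in W^{1,p}(\Omega)$, $\|u\|_{L^\infty(\mu)}\le M$, and $u(x_i)=y_i$ on $\mathcal{O}$ — I would take $u_n$ to be the restriction of $u$ to $\Omega_n$; then $u_n$ satisfies the constraint exactly, so $\mathcal{F}^{con}_{n,k_n}(u_n)=\mathcal{F}_{n,k_n}(u_n)$, and Theorem~\ref{th:4.1}'s limsup bound gives $\limsup_n\mathcal{F}_{n,k_n}(u_n)\le\mathcal{E}(u;\rho^{1-p/d})=\mathcal{E}^{con}(u;\rho^{1-p/d})$. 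There is one mild technical point: Theorem~\ref{th:4.1} as stated produces a recovery sequence for each $u$, but the recovery sequence in its proof for Lipschitz $u$ is precisely the restriction $u|_{\Omega_n}$, so no separate density argument is needed for the constrained case provided the minimizer (or the target $u$) can be taken Lipschitz; since $p>d$ forces the relevant functions into $C^{1-d/p}$ and $W^{1,p}\cap C^2(\overline\Omega)$ is dense, one reduces to smooth $u$ and the restriction construction applies. I expect the only genuinely delicate step to be the $k_n$-NN version of Lemma~\ref{le:3.7}, i.e.\ verifying that the mollifier domination $J_{\hat\varepsilon_n}(x_k-y)\le C\hat\varepsilon_n(x_k)^{-d}\,\eta\big((x_k-T_n(y))/\varepsilon_n(x_k)\big)$ and the uniform convergence $J_{\hat\varepsilon_n}*\tilde u_n\to u$ on $\Omega'$ still go through with the variable radius; but since $\hat\varepsilon_n(x)$ is uniformly comparable to $\bar\varepsilon_n$ by step~1 of Theorem~\ref{th:4.1}, this is a routine adaptation rather than a new obstacle, and the theorem then follows exactly as Theorem~\ref{th:3.8} followed from Theorem~\ref{th:3.6} and Lemma~\ref{le:3.7}.
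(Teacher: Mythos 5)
Your proposal is correct and follows the same overall architecture as the paper's proof: both reduce to the argument of Theorem~\ref{th:3.8}, the only new ingredient being the uniform-convergence conclusion of Lemma~\ref{le:3.7} in the $k_n$-NN setting, which is what lets one pass from $\mathcal{E}(u;\rho^{1-p/d})$ to $\mathcal{E}^{con}(u;\rho^{1-p/d})$ in the liminf inequality. The one place you diverge is in how that ingredient is obtained. You propose to re-run the proof of Lemma~\ref{le:3.7} with the variable radius $\hat\varepsilon_n(x)$, relying on its uniform comparability with $\bar\varepsilon_n$. The paper instead establishes the single comparison inequality $\mathcal{E}_{n,\varepsilon_n}(u_n)\le C\,\mathcal{F}_{n,k_n}(u_n)$ (their \eqref{eq:th:4.2:1}) for a suitable \emph{constant} radius with $\delta_n\ll\varepsilon_n\ll 1$, obtained from the lower bound \eqref{eq:th:4.1:4} by replacing $\hat\varepsilon_n(x)$ with $\inf_{x\in\Omega}\hat\varepsilon_n(x)$ and passing back through the transport map; this lets Lemma~\ref{le:3.7} be invoked verbatim as a black box, since boundedness of $\mathcal{F}_{n,k_n}(u_n)$ then forces boundedness of $\mathcal{E}_{n,\varepsilon_n}(u_n)$, which is all that lemma requires. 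The paper's reduction is slightly more economical (no mollifier estimates need revisiting), but your variable-radius adaptation goes through for exactly the reasons you give -- the sandwich \eqref{eq:th:4.1:3} and the bounds $\bar\varepsilon_n\le\varepsilon_n(x)\le C^{-1/d}\bar\varepsilon_n$ -- so both routes are sound. The remaining steps (liminf via monotonicity $\mathcal{F}^{con}_{n,k_n}\ge\mathcal{F}_{n,k_n}$, limsup via restriction of $u$ to $\Omega_n$ after reducing to smooth $u$ by density) match the paper, and your handling of the recovery-sequence technicality is at the same level of rigor as the original.
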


\begin{proof}
  Assume that
  \begin{equation}\label{eq:th:4.2:1}
    \mathcal{E}_{n,\varepsilon_n}(u_n)\leq C\mathcal{F}_{n,k_n}(u_n),
  \end{equation}
  for a constant $\varepsilon_n$ that satisfies $\delta_n\ll\varepsilon_n\ll 1$.
  Then the proof of the theorem is exactly the same as the proof of Theorem \ref{th:3.8}, except that we use \eqref{eq:th:4.2:1} to ensure that the conclusion of Lemma \ref{le:3.7} holds.

  To prove \eqref{eq:th:4.2:1}, we utilize \eqref{eq:th:4.1:4}. More precisely,
  \begin{align*}
    &\mathcal{F}_{n,k_n}(u_n) \\
    &\geq \inf_{x\in\Omega}\left|\frac{(\rho(x))^{1/d}\hat{\varepsilon}_n(x)}{\bar{\varepsilon}_n}\right|^{p} \\
    &\qquad \int_{\Omega}\frac{1}{|\hat{\varepsilon}_n(x)|^p}
    \esssup_{y,z\in\Omega}
    \eta\left(\frac{x-z}{\hat{\varepsilon}_n(x)}\right)
    \eta\left(\frac{x-y}{\hat{\varepsilon}_n(x)}\right)
    |\tilde{u}_n(z)-\tilde{u}_n(y)|^p(\rho(x))^{1-p/d}dx\\
    &\geq \inf_{x\in\Omega}\left|\frac{(\rho(x))^{1/d}\hat{\varepsilon}_n(x)}{\bar{\varepsilon}_n}\right|^{p}  \inf_{x\in\Omega}(\rho(x))^{-p/d}  \\
    &\qquad
    \frac{\varepsilon_n^p}{\sup_{x\in\Omega}|\hat{\varepsilon}_n(x)|^p}
    \int_{\Omega}\frac{1}{{\varepsilon}_n^p}
    \esssup_{y,z\in\Omega}
    \eta\left(\frac{x-z}{\tilde{\varepsilon}_n}\right)
    \eta\left(\frac{x-y}{\tilde{\varepsilon}_n}\right)
    |\tilde{u}_n(z)-\tilde{u}_n(y)|^p\rho(x)dx\\
    &\geq \frac{C}{{\varepsilon}_n^p}\int_{\Omega}
    \esssup_{y,z\in\Omega}
    \eta\left(\frac{T_n(x)-T_n(z)}{{\varepsilon}_n}\right)
    \eta\left(\frac{T_n(x)-T_n(y)}{{\varepsilon}_n}\right)
    |\tilde{u}_n(z)-\tilde{u}_n(y)|^p\rho(x)dx\\
    &=C \mathcal{E}_{n,\varepsilon_n}(u_n),
  \end{align*}
  where $\tilde{\varepsilon}_n=\inf_{x\in\Omega}\hat{\varepsilon}_n(x)$ and $\varepsilon_n=\tilde{\varepsilon}_n-2\|T_n-Id\|_{L^\infty(\Omega)}$.
  This completes the proof.
\end{proof}

The main result of this section is a corollary of Theorem \ref{th:4.2}. The proof is identical to that of Theorem \ref{th:3.1}.
\begin{theorem}\label{th:4.3}
  Let $p>d$ and $\delta_n\ll\bar{\varepsilon}_n\ll1$. If $u_n$ is a minimizer of $\mathcal{F}^{con}_{n,k_n}$, then almost surely,
  \begin{equation*}
    (\mu_n,u_n)\rightarrow (\mu,u),\quad\mbox{in } TL^p(\Omega),
  \end{equation*}
  as $n\rightarrow\infty$ for a function $u\in L^{p}(\Omega)$.
  Furthermore,
  \begin{equation*}
    \lim_{n\rightarrow\infty}\max_{x_k\in\Omega_n\cap\Omega'}|u_n(x_k)-u(x_k)|=0,
  \end{equation*}
  for any $\Omega'$ compactly contained in $\Omega$ and
  $u$ is a minimizer of $\mathcal{E}^{con}(~\cdot~ ;\rho^{1-p/d})$.
\end{theorem}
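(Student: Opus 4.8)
The plan is to replay the proof of Theorem~\ref{th:3.1} line by line, with Theorem~\ref{th:4.2} taking the place of Theorem~\ref{th:3.8} and the comparison estimate \eqref{eq:th:4.2:1} taking the place of \eqref{eq:th:3.1:5}. First I would record that $\mathcal{F}^{con}_{n,k_n}$ admits at least one minimizer $u_n$ for each fixed $n$ and that $\mathcal{E}^{con}(~\cdot~;\rho^{1-p/d})$ admits a unique minimizer. Since $\bar{\varepsilon}_n\gg\delta_n$, the hypergraph $H_{n,k_n}$ is connected, so the Poincar\'e-type inequality of \cite{el2020discrete} (valid for any $p>1$) makes $\mathcal{F}_{n,k_n}$ coercive on the affine set $V_{\mathcal{O}}=\{v:\Omega_n\to\mathbb{R}:v(x_i)=y_i\text{ on }\mathcal{O}\}$; lower semicontinuity is immediate, so a minimizer exists. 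On the continuum side, because $0<\inf_\Omega\rho\le\sup_\Omega\rho<\infty$ the weight $\rho^{1-p/d}$ is again bounded between two positive constants, so $\mathcal{E}(~\cdot~;\rho^{1-p/d})$ is strictly convex modulo additive constants on $W^{1,p}(\Omega)$; since $p>d$ forces $W^{1,p}(\Omega)\hookrightarrow C^{0,1-d/p}(\overline{\Omega})$, the pointwise constraint $u(x_i)=y_i$ is well defined and pins down the constant, yielding uniqueness exactly as in \cite{slepcev2019analysis}.

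Next I would use truncation: with $M=\max_{1\le i\le N}|y_i|$ one has $\mathcal{F}^{con}_{n,k_n}(f_M)\le\mathcal{F}^{con}_{n,k_n}(f)$ for every $f$, so we may assume $\|u_n\|_{L^\infty(\mu_n)}\le M$. The estimate \eqref{eq:th:4.2:1}, proved inside Theorem~\ref{th:4.2}, gives a constant radius $\varepsilon_n$ with $\delta_n\ll\varepsilon_n\ll1$ such that
\begin{equation*}
  \mathcal{E}_{n,\varepsilon_n}(u_n)\le C\,\mathcal{F}_{n,k_n}(u_n)\le C\,\mathcal{F}^{con}_{n,k_n}(u_n)<\infty ,
\end{equation*}
and the left-hand side dominates a graph $p$-Laplacian regularization with the indicator kernel, exactly as in \eqref{eq:th:3.1:5}. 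Applying the compactness of the graph $p$-Laplacian (\cite[Proposition 4.4]{slepcev2019analysis}) together with the uniform $L^\infty$ bound, I extract a subsequence along which $(\mu_{n_m},u_{n_m})\to(\mu,u)$ in $TL^p(\Omega)$ for some $u\in L^p(\Omega)$.

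Then I would invoke Theorem~\ref{th:4.2}, which states $\mathcal{F}^{con}_{n,k_n}\stackrel{\Gamma}{\longrightarrow}\mathcal{E}^{con}(~\cdot~;\rho^{1-p/d})$ in the $TL^p$ metric on $\{(\nu,g):\nu\in\mathcal{P}(\Omega),\|g\|_{L^\infty(\nu)}\le M\}$. Since the minimizers $\{u_n\}$ form a precompact minimizing sequence, Proposition~\ref{pr:2.2} identifies $u$ as a minimizer of $\mathcal{E}^{con}(~\cdot~;\rho^{1-p/d})$; uniqueness of that minimizer upgrades the subsequential convergence to convergence of the full sequence. Finally, the uniform convergence $\lim_n\max_{x_k\in\Omega_n\cap\Omega'}|u_n(x_k)-u(x_k)|=0$ on compactly contained $\Omega'$ follows from Lemma~\ref{le:3.7}, whose hypothesis $\sup_n\mathcal{E}_{n,\varepsilon_n}(u_n)<\infty$ is supplied once more by \eqref{eq:th:4.2:1} together with $\mathcal{F}_{n,k_n}(u_n)\to\min\mathcal{E}^{con}(~\cdot~;\rho^{1-p/d})<\infty$. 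The ``almost surely'' is the almost-sure event on which \eqref{measure} holds, as in Theorem~\ref{th:3.1}.

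There is no genuinely new difficulty here; the only points worth checking carefully are that the $k_n$-NN Poincar\'e inequality and the graph compactness bound survive the replacement of $\varepsilon_n$ by $\bar{\varepsilon}_n$, and that $\rho^{1-p/d}$ remains bounded away from $0$ and $\infty$. The single device that makes everything else automatic is the comparison \eqref{eq:th:4.2:1}: it transfers both the $\varepsilon_n$-ball compactness and the conclusion of Lemma~\ref{le:3.7} to the $k_n$-NN setting, so that the proof of Theorem~\ref{th:3.1} applies essentially without change.
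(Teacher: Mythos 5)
Your proposal is correct and follows essentially the same route as the paper, which simply declares the proof of Theorem~\ref{th:4.3} to be identical to that of Theorem~\ref{th:3.1} with Theorem~\ref{th:4.2} in place of Theorem~\ref{th:3.8}; the comparison estimate \eqref{eq:th:4.2:1} is indeed the device the paper uses to transfer the compactness and the hypothesis of Lemma~\ref{le:3.7} to the $k_n$-NN setting. Nothing further is needed.
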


We conclude this section with two remarks.
\begin{remark}
In this paper, we consider the hypergraph $p$-Laplacian regularization with $p>1$ in a semisupervised setting. The results established in Section \ref{se:3} and Section \ref{se:4} can be generalized to $p=1$ in an unsupervised setting. The continuum limits of $\mathcal{E}_{n,\varepsilon_n}$ and $\mathcal{F}_{n,k_n}$ with $p=1$ become weighted total variations.
\end{remark}

\begin{remark}
Since the given data is a point cloud, we can compute the distance $|x_i-x_j|$ between any two vertices $x_i$ and $x_j$ and add this information to the objective function for a better performance.
A straightforward way is to construct a weight $w_{i,j}$ as a nonincreasing function of the distance $|x_i-x_j|$ and replace $|u(x_i)-u(x_j)|^p$ in both $\mathcal{E}_{n,\varepsilon_n}$ and $\mathcal{F}_{n,k_n}$ by $w_{i,j}|u(x_i)-u(x_j)|^p$.
All results in Section \ref{se:3} and Section \ref{se:4} can be generalized to the new energies without any essential modification under the assumption that
\begin{equation*}
  w_{i,j}=\eta(|x_i-x_j|),
\end{equation*}
and $\eta: [0,\infty)\rightarrow [0,\infty)$ is positive and continuous at $x=0$ and has compact support.
\end{remark}

\section{The numerical algorithm}

In this section, we present an algorithm for solving the optimization problem
  \begin{align}\label{eq:5.0}
    \min_{u\in\mathbb{R}^n}\frac{1}{p}\sum_{k=1}^n\max_{x_i,x_j\in e_k}w_{i,j}\left|u(x_i)-u(x_j)\right|^p
    \quad \mbox{s.t. } u(x_i)=y_i \mbox{ for } x_i\in\mathcal{O},
  \end{align}
  where $e_k=\{x_j\in\Omega_n: |x_k-x_j|\leq \varepsilon_n\}$ for the $\varepsilon_n$-ball hypergraph or  $e_k=\{x_j\in\Omega_n: x_j\stackrel{k_n}{\sim} x_k \}$ for the $k_n$-NN hypergraph.

Let
\begin{align*}
  \mathbb{I}_{\mathcal{O}}(u)=
  \begin{cases}
    0,\quad \mbox{if } u(x_i)=y_i \mbox{ for any } x_i\in\mathcal{O},\\
    \infty,\quad \mbox{otherwise},
  \end{cases}
\end{align*}
be an indicator function. Then problem \eqref{eq:5.0} can be rewritten as an
unconstrained problem
\begin{equation}\label{eq:5.1}
  \min_{u\in\mathbb{R}^n}\sum_{k=1}^ng\left(A_ku\right) + \mathbb{I}_{\mathcal{O}}(u),
\end{equation}
  where
\begin{equation*}
  g(\beta)=\frac{1}{p}\left(\max|\beta|\right)^p, \quad \beta\in\mathbb{R}^m,
\end{equation*}
$m=\max\left\{\frac{|e_k|(|e_k|-1)}{2}: k=1,\cdots,n\right\}$, and $A_k\in\mathbb{R}^{m\times n}$ such that
  \begin{equation*}
    g\left(A_ku\right)=\frac{1}{p}\max_{x_i,x_j\in e_k}\left|w_{i,j}^{\frac{1}{p}}(u(x_i)-u(x_j))\right|^p.
  \end{equation*}
Clearly, both $g$ and $\mathbb{I}_{\mathcal{O}}$ are proper,  lower semicontinuous, and convex functions.
A popular algorithm for solving \eqref{eq:5.1} is the stochastic primal-dual hybrid gradient (SPDHG) algorithm \cite{chambolle2018stochastic}
that rewrites the original optimal problem as a saddle point problem and updates the primal variable and the dual variable separately.

By introducing dual variables $\alpha_i\in\mathbb{R}^m$, $i=1,\cdots,n$, problem \eqref{eq:5.1} is equivalent to the saddle point problem
\begin{equation}\label{eq:saddle}
  \min_{u\in\mathbb{R}^n}\max_{\substack{\alpha_i\in\mathbb{R}^m\\i=1,\cdots,n}}\sum_{i=1}^n\langle A_iu,\alpha_i\rangle -g^*(\alpha_i)+\mathbb{I}_{\mathcal{O}}(u),
\end{equation}
where
\begin{equation*}
  g^*(\alpha_i)=\max_{\beta\in \mathbb{R}^m}\langle \beta,\alpha_i\rangle -g(\beta),\quad i=1,2,\cdots,n,
\end{equation*}
denotes the Fenchel conjugate of $g$ and we utilize the property $g^{**}=g$.
Now SPDHG with serial sampling is summarized in Algorithm \ref{alg1}.
Under the condition
\begin{equation*}
  \sigma\tau<\max_{i=1,\cdots,n}\frac{p_i}{\|A_i\|^2},
\end{equation*}
it converges almost surely to a solution of problem \eqref{eq:5.1} \cite{alacaoglu2022convergence}.

\begin{algorithm}
  \caption{SPDHG for solving problem \eqref{eq:saddle}.}
  \label{alg1}
  \begin{algorithmic}
  \REQUIRE Primal variable $u^{(0)}$, dual variables $\alpha_i^{(0)}$, $i=1,2,\cdots,n$; step sizes  $\tau, \sigma$; probabilities $p_i$, $i=1,\cdots,n$.
  \STATE Initialization: $\bar{\alpha}^{(0)}_i=\alpha^{(0)}_i$, $i=1,\cdots,n$; $k=0$.
  \WHILE { the stopping criterion is not satisfied}
  \STATE Update the primal variable:
  \begin{align}\label{eq:5.2}
  \begin{split}
    u^{(k+1)}=&\mbox{prox}_{\tau\mathbb{I}_{\mathcal{O}}} \left(u^{(k)}-\tau\sum_{i=1}^nA_i^T\bar{\alpha}_i^{(k)}\right)\\
    :=&\arg\min_{u\in\mathbb{R}^n} \tau\mathbb{I}_{\mathcal{O}}(u) + \frac{1}{2}\left\|u-\left(u^{(k)}-\tau\sum_{i=1}^nA_i^T\bar{\alpha}_i^{(k)}\right)\right\|_2^2.
  \end{split}
  \end{align}
  \STATE Update dual variables: Randomly pick $i\in \{1,2,\cdots,n\}$ with probability $p_i$,
  \begin{equation}\label{eq:5.3}
    \alpha^{(k+1)}_j=
    \begin{cases}
      \mbox{prox}_{\sigma g^*}\left(\alpha^{(k)}_i+\sigma A_iu^{(k+1)}\right), \quad &\mbox{if } j=i,\\
      \alpha^{(k)}_j,\quad &\mbox{if } j\neq i.
    \end{cases}
  \end{equation}
  \STATE Extrapolation on dual variables:
  \begin{equation*}
    \bar{\alpha}^{(k+1)}_j=
    \begin{cases}
      \alpha^{(k+1)}_i+\frac{1}{p_i} \left(\alpha^{(k+1)}_i-\alpha^{(k)}_i\right),\quad &\mbox{if } j=i,\\
      \alpha^{(k)}_j,\quad &\mbox{if } j\neq i.
    \end{cases}
  \end{equation*}
  \ENDWHILE
  \RETURN $u^{(k+1)}$.
  \end{algorithmic}
  \end{algorithm}

Subproblem \eqref{eq:5.2} has a closed form solution
\begin{align*}
  u^{(k+1)}(x_j)=
  \begin{cases}
    y_j,\quad &\mbox{if } x_j\in\mathcal{O},\\
    \left(u^{(k)}-\tau\sum_{i=1}^nA_i^T\bar{\alpha}_i^{(k)}\right)(x_j),\quad&\mbox{otherwise.}
  \end{cases}
\end{align*}
The rest is to find the Fenchel conjugate $g^*$ and solve subproblem
\eqref{eq:5.3}.


\begin{lemma}
Let
\begin{equation*}
  h(t)=\frac{1}{p}|t|^p,\quad t\in\mathbb{R}.
\end{equation*}
Then
\begin{align}\label{eq:5.5a}
  h^*(s)=
    \begin{cases}
      0,\quad \mbox{if } |s|\leq 1,\\
      \infty,\quad \mbox{otherwise,}
    \end{cases}
    \mbox{if } p=1,
\end{align}
or
\begin{equation}\label{eq:5.5b}
  h^*(s)=\frac{1}{p'}|s|^{p'}, \quad \mbox{if } p>1,
\end{equation}
where $p'=\frac{p}{p-1}$.
Moreover,
\begin{equation}\label{eq:5.6}
  g^*(\alpha)=h^*(\|\alpha\|_1),\quad \alpha\in\mathbb{R}^m.
\end{equation}
\end{lemma}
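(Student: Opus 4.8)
The plan is to handle the two identities in sequence: first establish the scalar conjugate $h^*$ by an elementary one-dimensional computation, then deduce \eqref{eq:5.6} by reducing the $m$-dimensional supremum defining $g^*$ to that scalar problem via $\ell^1$--$\ell^\infty$ duality.

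For $h^*$, I would argue directly from $h^*(s)=\sup_{t\in\mathbb{R}}\bigl(st-\tfrac1p|t|^p\bigr)$. When $p=1$, the map $t\mapsto st-|t|$ is bounded above — by $0$, attained at $t=0$ — exactly when $|s|\le 1$, and is unbounded along $t\to\pm\infty$ with $\operatorname{sign}(t)=\operatorname{sign}(s)$ when $|s|>1$, which gives \eqref{eq:5.5a}. When $p>1$, the objective is concave and differentiable in $t$, so the supremum is attained at the critical point $s=|t|^{p-1}\operatorname{sign}(t)$, i.e.\ $t=|s|^{1/(p-1)}\operatorname{sign}(s)$; substituting and simplifying using $\tfrac{1}{p'}=1-\tfrac1p$ yields \eqref{eq:5.5b}. (This is the classical Young duality $(\tfrac1p|\cdot|^p)^*=\tfrac1{p'}|\cdot|^{p'}$ and could instead be quoted from any convex analysis reference.)

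For \eqref{eq:5.6}, the key observation is that $g(\beta)=h(\|\beta\|_\infty)$ with $h$ nondecreasing on $[0,\infty)$, so I would stratify the supremum over the spheres $\{\|\beta\|_\infty=r\}$, $r\ge 0$. Since $\ell^1$ is the dual norm of $\ell^\infty$ on $\mathbb{R}^m$, one has $\sup_{\|\beta\|_\infty=r}\langle\alpha,\beta\rangle=r\|\alpha\|_1$ (by scaling for $r>0$, trivially for $r=0$), hence
$g^*(\alpha)=\sup_{r\ge 0}\bigl(r\|\alpha\|_1-\tfrac1p r^p\bigr)$.
Because $\|\alpha\|_1\ge 0$, negative values of $r$ only decrease $r\|\alpha\|_1-\tfrac1p|r|^p$ below its value at $r=0$, so this one-sided supremum equals $\sup_{r\in\mathbb{R}}\bigl(r\|\alpha\|_1-\tfrac1p|r|^p\bigr)=h^*(\|\alpha\|_1)$, completing the proof.

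All of this is routine; the only step demanding a little care is the reduction of the vector supremum to a scalar one — concretely, justifying $\sup_{\|\beta\|_\infty=r}\langle\alpha,\beta\rangle=r\|\alpha\|_1$ and then using nonnegativity of $\|\alpha\|_1$ to pass from $\sup_{r\ge 0}$ to $\sup_{r\in\mathbb{R}}$ without changing the value, so that the scalar supremum is literally the definition of $h^*(\|\alpha\|_1)$.
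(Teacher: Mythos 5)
Your proof is correct and follows essentially the same route as the paper: the scalar conjugate is the standard Young-duality computation, and the vector identity \eqref{eq:5.6} rests on the same two ingredients the paper uses, namely the bound $\langle\alpha,\beta\rangle\le\|\alpha\|_1\|\beta\|_\infty$ and the existence of $\gamma$ with $\|\gamma\|_\infty=1$ and $\langle\alpha,\gamma\rangle=\|\alpha\|_1$. Your stratification over the spheres $\{\|\beta\|_\infty=r\}$ merely packages as a single chain of equalities what the paper presents as matching upper and lower bounds.
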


\begin{proof}
 By the definition of the Fenchel conjugate, \eqref{eq:5.5a} and \eqref{eq:5.5b} can be verified directly. We prove \eqref{eq:5.6} in the following.

For any $\alpha, \beta\in \mathbb{R}^m$,
\begin{align*}
   \langle \alpha,\beta\rangle -g(\beta)\leq \|\alpha\|_1 \max |\beta|-\frac{1}{p}\left(\max |\beta|\right)^p\leq h^*(\|\alpha\|_1),
\end{align*}
from which we obtain that $g^*(\alpha)\leq h^*(\|\alpha\|_1)$.
On the other hand,
for any $\alpha\in \mathbb{R}^m$, let $\gamma\in \mathbb{R}^m$ such that $\max |\gamma|=1$ and $\langle \alpha,\gamma\rangle=\|\alpha\|_1$.
Then
\begin{align*}
  g^*(\alpha)\geq \max_{\substack{\beta=t\gamma\\t\in\mathbb{R}}}\langle \alpha,\beta\rangle -g(\beta)
  =\max_{t\in\mathbb{R}}\|\alpha\|_1 t-\frac{1}{p}|t|^p=h^*(\|\alpha\|_1).
\end{align*}
This completes the proof.
\end{proof}

Now if $p=1$, subproblem \eqref{eq:5.3} with $j=i$ becomes a projection onto the $L^1$ ball of the following form
\begin{equation*}
  \alpha^{(k+1)}_i=\arg\min_{\substack{\alpha\in\mathbb{R}^m\\ \|\alpha\|_1\leq 1}}\left\|\alpha-\left(\alpha^{(k)}_i+\sigma A_iu^{(k+1)}\right)\right\|_2^2,
\end{equation*}
which can be solved efficiently \cite{condat2016fast}.
In the case of $p>1$, the solution is given as follows.
\begin{lemma}
Let $p>1$.
  The solution of subproblem \eqref{eq:5.3} with $j=i$ satisfies
  \begin{equation}\label{eq:5.8}
    \alpha^{(k+1)}_i=\mbox{sign}(\alpha^{(k)}_i+\sigma A_iu^{(k+1)})\max\left\{\left|\alpha^{(k)}_i+\sigma A_iu^{(k+1)}\right|-\sigma \|\alpha^{(k+1)}_i\|_1^{p'-1}, 0\right\}.
  \end{equation}
\end{lemma}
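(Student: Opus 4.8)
The plan is to identify subproblem \eqref{eq:5.3} with $j=i$ as a proximal step for the conjugate $g^{*}$ and then, through its first-order optimality condition, to recognize it as a componentwise soft-thresholding whose threshold is itself governed by $\|\alpha^{(k+1)}_i\|_1$. Abbreviating $b:=\alpha^{(k)}_i+\sigma A_iu^{(k+1)}$ and using \eqref{eq:5.6} together with \eqref{eq:5.5b}, the update \eqref{eq:5.3} with $j=i$ reads
\begin{equation*}
  \alpha^{(k+1)}_i=\operatorname{prox}_{\sigma g^{*}}(b)=\arg\min_{\alpha\in\mathbb{R}^m}\Big\{\tfrac{\sigma}{p'}\|\alpha\|_1^{p'}+\tfrac12\|\alpha-b\|_2^2\Big\}.
\end{equation*}
The objective is proper, lower semicontinuous, coercive and, thanks to the quadratic term, strictly convex, so it has a unique minimizer, which I denote $\alpha^{*}$.

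Next I would write the optimality condition $0\in\partial\big(\tfrac{\sigma}{p'}\|\cdot\|_1^{p'}\big)(\alpha^{*})+(\alpha^{*}-b)$. The outer scalar map $t\mapsto\tfrac{\sigma}{p'}t^{p'}$ is convex, nondecreasing on $[0,\infty)$ and differentiable with derivative $\sigma t^{p'-1}$, which vanishes at $t=0$ because $p'-1=\tfrac{1}{p-1}>0$; the subdifferential chain rule for the composition with the convex function $\|\cdot\|_1$ then gives
\begin{equation*}
  \partial\big(\tfrac{\sigma}{p'}\|\cdot\|_1^{p'}\big)(\alpha^{*})=\sigma\|\alpha^{*}\|_1^{p'-1}\,\partial\|\cdot\|_1(\alpha^{*}),
\end{equation*}
the case $\alpha^{*}=0$ being handled by the vanishing derivative. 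Hence there is $v\in\partial\|\cdot\|_1(\alpha^{*})$, that is $v_j=\operatorname{sign}(\alpha^{*}_j)$ when $\alpha^{*}_j\neq0$ and $v_j\in[-1,1]$ when $\alpha^{*}_j=0$, such that $\sigma\|\alpha^{*}\|_1^{p'-1}v+\alpha^{*}-b=0$. Writing $\lambda:=\sigma\|\alpha^{*}\|_1^{p'-1}\ge0$, this is precisely the optimality condition for $\min_{\alpha}\{\lambda\|\alpha\|_1+\tfrac12\|\alpha-b\|_2^2\}$, whose unique solution is the componentwise shrinkage $\operatorname{sign}(b)\max\{|b|-\lambda,0\}$. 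Substituting back $\lambda=\sigma\|\alpha^{(k+1)}_i\|_1^{p'-1}$ and $b=\alpha^{(k)}_i+\sigma A_iu^{(k+1)}$ gives \eqref{eq:5.8}.

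Finally I would stress that \eqref{eq:5.8} is implicit, since $\|\alpha^{(k+1)}_i\|_1$ occurs on both sides: taking the $\ell^1$ norm of \eqref{eq:5.8} turns it into one scalar equation for $\tau:=\|\alpha^{(k+1)}_i\|_1$ whose right-hand side is continuous and nonincreasing in $\tau$, so it has a unique root and can be solved to high accuracy by a short bisection, as claimed in the surrounding text. I expect the only mildly delicate point to be the justification of the subdifferential chain rule for $\tfrac{\sigma}{p'}\|\cdot\|_1^{p'}$ at the origin; the rest is routine convex calculus, parallel to the derivation of the classical soft-thresholding operator.
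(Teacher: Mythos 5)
Your proof is correct and follows essentially the same route as the paper: both reduce $\operatorname{prox}_{\sigma g^*}$ to componentwise soft-thresholding with the implicit threshold $\lambda=\sigma\|\alpha^{(k+1)}_i\|_1^{p'-1}$. The only difference is cosmetic — the paper derives the optimality system by introducing an auxiliary variable $t=\|\alpha\|_1$ and a Lagrange multiplier, whereas you apply the subdifferential chain rule to the composition of $t\mapsto\tfrac{\sigma}{p'}t^{p'}$ with $\|\cdot\|_1$ directly, which is if anything the cleaner justification of the same computation.
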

All operations involving vectors in \eqref{eq:5.8} are understood elementwise.

\begin{proof}
For simplicity of notation, let $\beta=\alpha^{(k)}_i+\sigma A_iu^{(k+1)}$ and $\alpha^*=\alpha^{(k+1)}_i$. Then subproblem  \eqref{eq:5.3} becomes
\begin{equation*}
  \alpha^*=\arg\min_{\alpha\in\mathbb{R}^m}\frac{1}{2}\|\alpha-\beta\|_2^2+\frac{\sigma}{p'}\left(\|\alpha\|_1\right)^{p'},\quad p>1.
\end{equation*}
Since the right-hand side is non-separable, we introduce a new variable $t=\|\alpha\|_1$ and consider the Lagrangian
\begin{equation*}
  \frac{1}{2}\|\alpha-\beta\|_2^2+\frac{\sigma}{p'}t^{p'}+\lambda(\|\alpha\|_1-t).
\end{equation*}
The saddle point $(\alpha^*, \lambda^*, t^*)$ satisfies
\begin{align*}
  \begin{cases}
  \alpha^*\in\arg\min_{\alpha\in\mathbb{R}^m}  \frac{1}{2}\|\alpha-\beta\|_2^2+\lambda^*\|\alpha\|_1,\\
  \lambda^*=\sigma (t^*)^{p'-1},\\
  t^*=\|\alpha^*\|_1.
  \end{cases}
\end{align*}
The $L^1$ regularized problem has a closed-form solution
$\alpha^*=\mbox{sign}(\beta)\max\{|\beta|-\lambda^*, 0\}$. By substituting
$\lambda^*=\sigma (t^*)^{p'-1}=\sigma \|\alpha^*\|_1^{p'-1}$ into it, we arrive at
\begin{equation}\label{eq:5.9}
  \alpha^*=\mbox{sign}(\beta)\max\{|\beta|-\sigma \|\alpha^*\|_1^{p'-1}, 0\},
\end{equation}
which is exactly \eqref{eq:5.8} and is the solution of subproblem \eqref{eq:5.3}.
\end{proof}

The last step is to solve $\alpha^{(k+1)}_i$ from \eqref{eq:5.8}, which can be done in a few iterations.
For the rest of this section, the subscript $j$ of a vector denotes the $j$-th element of it.
To explain the basic idea, we continue to use the notation of \eqref{eq:5.9} for simplicity.

It follows from \eqref{eq:5.9} that $\alpha^*=\mbox{sign}(\beta)|\alpha^*|$ and $|\alpha^*|$ depends only on $|\beta|$.
W.l.o.g., we assume that $\beta\geq 0$ and $\beta$ is ordered in a nondecreasing order.
As a result, $\alpha^*\geq 0$ and $\alpha^*$ is also in a nondecreasing order.

\begin{lemma}\label{le:5.3}
  Let $\beta\in\mathbb{R}^m$ be a nonnegative vector in a nondecreasing order and $\beta\neq 0$. If $\sigma>0$ and $p>1$,
  there exists a unique vector $\alpha\in\mathbb{R}^m$ such that
  \begin{equation}\label{eq:5.10}
    \alpha=\beta-\sigma\left(\sum_{j=1}^m\alpha_j\right)^{p'-1},
  \end{equation}
  and
  \begin{equation*}
    \lambda:=\sigma\left(\sum_{j=1}^m\alpha_j\right)^{p'-1}> 0.
  \end{equation*}
  If $\alpha\geq 0$, then $\alpha^*=\alpha$, where $\alpha^*$ is the solution of equation \eqref{eq:5.9}.
  Otherwise, we have
  \begin{equation}\label{eq:5.10a}
    \lambda\leq\lambda^*:=\sigma \|\alpha^*\|_1^{p'-1}.
  \end{equation}
\end{lemma}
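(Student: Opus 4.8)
The plan is to collapse the fixed-point relations \eqref{eq:5.10} and \eqref{eq:5.9} to scalar monotone equations and compare them. Throughout, $p'-1>0$ since $p>1$, and $B:=\sum_{j=1}^m\beta_j>0$ because $\beta\ge 0$, $\beta\ne 0$.

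\emph{Existence and uniqueness of $\alpha$.} First I would note that any $\alpha$ solving \eqref{eq:5.10} has the form $\alpha_j=\beta_j-\sigma S^{p'-1}$ with $S:=\sum_{k=1}^m\alpha_k$, and summing over $j$ forces $\phi(S):=S+m\sigma S^{p'-1}=B$; conversely any $S>0$ with $\phi(S)=B$ produces a solution via $\alpha_j=\beta_j-\sigma S^{p'-1}$ (one checks $\sum_j\alpha_j=B-m\sigma S^{p'-1}=S$). Since $\phi\colon(0,\infty)\to(0,\infty)$ is continuous, strictly increasing, with $\phi(0^+)=0$ and $\phi(+\infty)=+\infty$, it has a unique root $S^*>0$; this yields the unique $\alpha$, and $\lambda=\sigma(S^*)^{p'-1}>0$.

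\emph{The case $\alpha\ge 0$.} Then $\sum_j\alpha_j=\|\alpha\|_1$, so $\lambda=\sigma\|\alpha\|_1^{p'-1}$, and $\beta_j\ge\lambda>0$ for all $j$, hence $\beta_j-\lambda=\max\{\beta_j-\lambda,0\}$ and $\mbox{sign}(\beta_j)|\beta_j|=\beta_j$. Therefore $\alpha=\mbox{sign}(\beta)\max\{|\beta|-\sigma\|\alpha\|_1^{p'-1},0\}$, i.e.\ $\alpha$ solves \eqref{eq:5.9}. Since \eqref{eq:5.9} has a unique solution — it is the optimality characterization of the strictly convex problem $\min_{\alpha}\tfrac12\|\alpha-\beta\|_2^2+\tfrac{\sigma}{p'}\|\alpha\|_1^{p'}$ from which it was derived — we conclude $\alpha^*=\alpha$.

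\emph{The inequality \eqref{eq:5.10a}.} Set $F(\mu):=\sum_{j=1}^m\max\{\beta_j-\mu,0\}$ for $\mu\ge 0$. Because $\beta\ge 0$, the solution of \eqref{eq:5.9} satisfies $\alpha^*_j=\max\{\beta_j-\lambda^*,0\}\ge 0$, so $\|\alpha^*\|_1=F(\lambda^*)$ and hence $\lambda^*$ is a root of $\psi(\mu):=\mu-\sigma F(\mu)^{p'-1}$; as $\psi$ is strictly increasing on $[0,\infty)$ with $\psi(0)=-\sigma B^{p'-1}<0$ and $\psi(\beta_{\max})=\beta_{\max}>0$, this root is unique and lies in $(0,\beta_{\max})$. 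On the other hand $\lambda=\sigma(B-m\lambda)^{p'-1}$ with $B-m\lambda=S^*>0$, so $\lambda$ is the unique root in $(0,B/m)$ of $\chi(\mu):=\mu-\sigma(B-m\mu)^{p'-1}$, which is strictly increasing there. Since $F(\mu)\ge\sum_j(\beta_j-\mu)=B-m\mu$ and $t\mapsto t^{p'-1}$ is nondecreasing on $[0,\infty)$, we get $\psi(\mu)\le\chi(\mu)$ for all $\mu\in[0,B/m)$. If $\lambda^*\ge B/m$, then $\lambda<B/m\le\lambda^*$. Otherwise $\chi(\lambda^*)\ge\psi(\lambda^*)=0=\chi(\lambda)$, and strict monotonicity of $\chi$ gives $\lambda\le\lambda^*$. (This last argument in fact yields $\lambda\le\lambda^*$ unconditionally, which is consistent with $\lambda=\lambda^*$ when $\alpha\ge 0$.)

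The main obstacle is this last step: the exponent $p'-1$ need not be an integer, so the power function must be applied only to nonnegative bases — in particular $B-m\mu$ must be kept nonnegative — which is exactly why the cases $\lambda^*\ge B/m$ and $\lambda^*<B/m$ are separated (the second being generic, the first genuinely occurring, e.g.\ for $\beta=(0,\dots,0,1)$ with $m$ large and $\sigma$ not small). The rest is a routine verification of the monotonicity and boundary values of $\phi$, $\psi$, $\chi$.
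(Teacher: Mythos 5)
Your proof is correct. The existence--uniqueness step and the case $\alpha\ge 0$ follow the same route as the paper: both collapse \eqref{eq:5.10} to the scalar equation $s+m\sigma s^{p'-1}=\sum_j\beta_j$ and invoke monotonicity, and both then check that a nonnegative $\alpha$ verifies \eqref{eq:5.9} (you are more explicit than the paper about why \eqref{eq:5.9} has a unique solution, via strict convexity of the underlying proximal problem -- a point the paper leaves implicit). For the inequality \eqref{eq:5.10a}, however, you take a genuinely different route. The paper orders $\beta$, identifies the zero set $\{1,\dots,k\}$ of $\alpha^*$, uses $\beta_j\le\lambda^*$ for $j\le k$ together with the scalar equation restricted to the active indices, and after some algebra arrives at $m\lambda+(\lambda/\sigma)^{p-1}\le m\lambda^*+(\lambda^*/\sigma)^{p-1}$, concluding by monotonicity of $\mu\mapsto m\mu+(\mu/\sigma)^{p-1}$. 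You instead characterize $\lambda^*$ and $\lambda$ as the unique roots of the increasing functions $\psi(\mu)=\mu-\sigma F(\mu)^{p'-1}$ and $\chi(\mu)=\mu-\sigma(B-m\mu)^{p'-1}$ and compare the roots through the pointwise bound $F(\mu)\ge B-m\mu$. This buys you three things: no bookkeeping over the support of $\alpha^*$ (so no need to order $\beta$ at this stage), an explicit treatment of the edge case $\lambda^*\ge B/m$ where the base of the power $p'-1$ would otherwise go negative (the paper's algebra absorbs this silently because it only ever raises $s,s^*\ge 0$ to powers), and the observation that the inequality holds unconditionally rather than only when $\alpha\not\ge 0$. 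Both arguments are sound; yours is the cleaner justification of the thresholding step that drives Algorithm~\ref{alg2}, while the paper's is closer to an explicit computation on the active set.
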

\begin{proof}
  It follows from \eqref{eq:5.10} that
  \begin{equation}\label{eq:5.11}
    s+m\sigma s^{p'-1}-\sum_{j=1}^m\beta_j=0,
  \end{equation}
  where $s=\sum_{j=1}^m\alpha_j$.
  Since $s+m\sigma s^{p'-1}$ is an increasing function on $\mathbb{R}^+$ and $\sum_{j=1}^m\beta_j> 0$, we obtain that equation \eqref{eq:5.11} admits a unique solution $s>0$.
  By substituting it into \eqref{eq:5.10}, we obtain a solution $\alpha$ with $\lambda> 0$.

  If $\alpha\geq 0$, then $\alpha$ also satisfies equation \eqref{eq:5.9} and $\alpha^*=\alpha$. Otherwise, there exists at least one zero element in $\alpha^*$. Assume that
  \begin{equation*}
    0=\alpha^*_1=\cdots=\alpha^*_k< \alpha^*_{k+1}\leq\cdots\leq \alpha^*_{m}.
  \end{equation*}
Then
  \eqref{eq:5.9} yields
  \begin{equation}\label{eq:5.12a}
    \beta_k\leq \lambda^*\leq \beta_{k+1},
  \end{equation}
  and
  \begin{equation}\label{eq:5.12}
    s^*+(m-k)\sigma s^{*p'-1}-\sum_{j=k+1}^m\beta_j=0,
  \end{equation}
  where $s^*=\|\alpha^*\|_1$.
  Now \eqref{eq:5.11}--\eqref{eq:5.12} imply that
  \begin{align*}
    m\lambda&=m\sigma s^{p'-1}=\sum_{j=1}^m\beta_j-s
    =\sum_{j=k+1}^m\beta_j+\sum_{j=1}^k\beta_j-s \\
    &=s^*+(m-k)\sigma s^{*p'-1}+\sum_{j=1}^k\beta_j-s\\
    &=(m-k)\lambda^*+\sum_{j=1}^k\beta_j +s^*-s \\
    &\leq m\lambda^* + \left(\frac{\lambda^*}{\sigma}\right)^{\frac{1}{p'-1}}-\left(\frac{\lambda}{\sigma}\right)^{\frac{1}{p'-1}},
  \end{align*}
  or equivalently,
  \begin{equation}
    m\lambda+\left(\frac{\lambda}{\sigma}\right)^{p-1}\leq m\lambda^* + \left(\frac{\lambda^*}{\sigma}\right)^{p-1}.
  \end{equation}
  From the monotonicity of $m\lambda+\left(\frac{\lambda}{\sigma}\right)^{p-1}$, we conclude that $\lambda\leq \lambda^*$.
\end{proof}

Lemma \ref{le:5.3} provides an algorithm for solving \eqref{eq:5.8}.
The key idea is that
\eqref{eq:5.10a} implies that $\alpha^*_j=0$ for any $j\in \{j: \alpha_j<0\}$.
Consequently, we can use Lemma \ref{le:5.3} again to solve the remaining elements of $\alpha^*$.
It should be noted that the proof of Lemma \ref{le:5.3} does not rely on the monotonicity of $\beta$.
The details are summarized in Algorithm \ref{alg2}.

\begin{algorithm}
  \caption{Solving $\mbox{prox}_{\sigma g^*}(\beta)$ for subproblem \eqref{eq:5.3} with $p>1$.}
  \label{alg2}
  \begin{algorithmic}
  \REQUIRE $\beta\in\mathbb{R}^m$; $\sigma>0$, $p>1$.
  \STATE Initialization: $\beta^+=|\beta|$, $\alpha=\beta^+$; stopping criterion $c=-1$.
  \WHILE { $c<0$}
  \STATE Find positive elements: $\gamma=\{\beta^+_j: j\in I\}\in\mathbb{R}^b$, where $I=\{j: \alpha_j>0\}$.
  \STATE Solve $s+b\sigma s^{p'-1}-\|\gamma\|_1=0$ on $\mathbb{R}^+$.
  \STATE Obtain the threshold: $\lambda=\sigma s^{p'-1}$.
  \STATE Update the solution: $\alpha=\max\{\beta^+-\lambda, 0\}$.
  \STATE $c=\min\gamma-\lambda$.
  \ENDWHILE
  \STATE Fix the negative part: $\alpha_j=-\alpha_j$ for any $j\in \{j: \beta_j<0\}$.
  \RETURN $\alpha$.
  \end{algorithmic}
\end{algorithm}

\section{Experimental results}
In this section, we present numerical experiments of the hypergraph $p$-Laplacian regularization (HpL) for data interpolation.
The purpose is to
verify the theoretical results established in previous sections and
make comparisons with the graph $p$-Laplacian regularization (GpL).
See \eqref{eq:GpL} and \eqref{eq:5.0} for the definition of GpL and HpL.

We start with experiments in 1D and then discuss performance of GpL and HpL on some higher-dimensional data interpolation problems, including semi-supervised learning and image inpainting.

\subsection{Experiments in 1D}
Let $d=1$ and $n=1280$.  We select $n$ random numbers under the standard uniform distribution on the interval $(0,1)$ and let $\Omega_n$ be the set of random numbers.
The set of labeled points $\mathcal{O}$ consists of 6 points in $\Omega_n$,
which are marked by red circles in Figure \ref{fig:1Deps}--\ref{fig:1Dp}.
We shall discuss the difference between GpL and HpL for the interpolation of signals defined on $\Omega_n$ with given labeled points $\mathcal{O}$. The weight function $w_{i,j}=1$ is used for both GpL and HpL.

Let us first consider the case of the $\varepsilon_n$-ball graph/hypergraph and $p=2$.
According to \cite{slepcev2019analysis}, the minimizer of GpL with the constraint on $\mathcal{O}$ is an approximation of the minimizer of the continuum $p$-Laplacian regularization with the constraint on $\mathcal{O}$ (the minimizer is a piecewise linear and continuous function that passes through the labeled points) for moderate connection radius $\varepsilon_n$ (i.e., $\delta_n\ll\varepsilon_n\ll\left(\frac{1}{n}\right)^{1/p}$).
This is verified by the results shown in the first row of Figure \ref{fig:1Deps}.
We consider the case of $\varepsilon_n\geq 0.006$ since the graph becomes disconnected when $\varepsilon_n\leq 0.005$.
When $\varepsilon_n= 0.006$, GpL gives an approximation of the continuous solution.
With an increase in $\varepsilon_n$, the error of GpL increases.
It develops spikes at the labeled points for large $\varepsilon_n$.

The second row of Figure \ref{fig:1Deps}, which shows the results of HpL with different $\varepsilon_n$, is quite different from the first row.
As the connection radius $\varepsilon_n$ increases, the error of HpL also increases.
However, it alleviates the development of spikes even for large $\varepsilon_n$ (see the 3rd and the 4th labeled points).
This partially verifies the conclusion of Theorem \ref{th:3.1}, showing that the convergence of the hypergraph $p$-Laplacian regularization to the continuum $p$-Laplacian regularization in a semisupervised setting requires a weaker assumption on the upper bound of $\varepsilon_n$ (i.e., $\delta_n\ll\varepsilon_n\ll 1$).
Two spikes (the 2nd and the 5th labeled points) appear in the results because the assumption $\varepsilon_n\ll 1$ is no longer fulfilled.

A similar conclusion can be obtained for the $k_n$-NN graph/hypergraph.
In Figure \ref{fig:1Dk}, we present the results of GpL and HpL with different $k_n$. Again we observe that GpL develops spikes at the labeled points for large $k_n$ while HpL performs better.
This coincides with the result we established in Theorem \ref{th:4.3}.

Theoretically, the minimizer of HpL gains more smoothness as the power $p$ increases.
Figure \ref{fig:1Dp} shows the results of HpL with different $p$.
It is observed that a higher $p$ also prevents the occurrence of spikes.
A similar conclusion has been obtained for GpL \cite{el2016asymptotic}.
In the case of $p=1$ where the assumption $p>d$ is not fulfilled, HpL gives noninformative solutions.

\begin{figure}[htbp]
  \centering
  \begin{tabular}{@{}c@{~}c@{~}c@{~}c}
    \includegraphics[width=.24\textwidth]{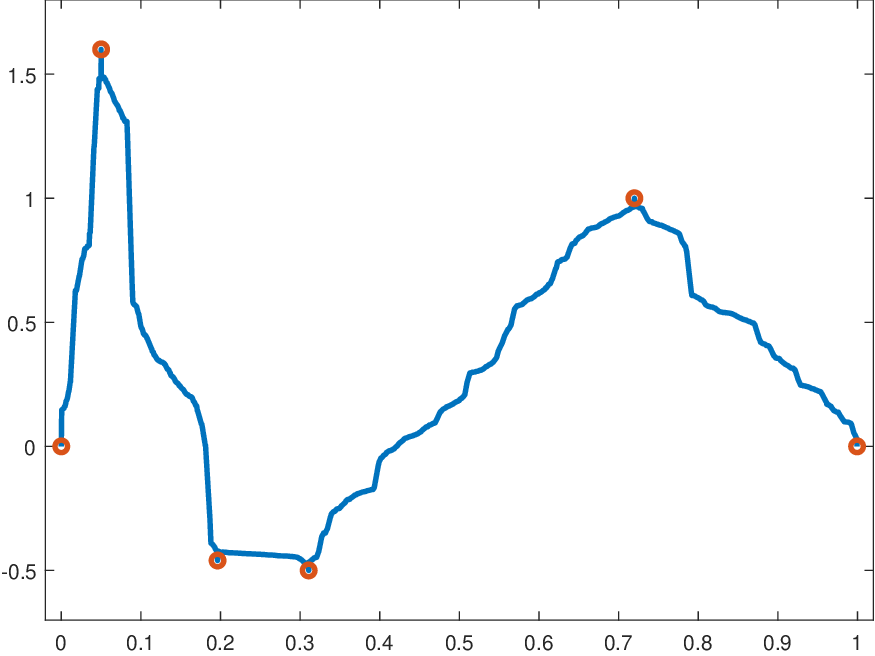} &
    \includegraphics[width=.24\textwidth]{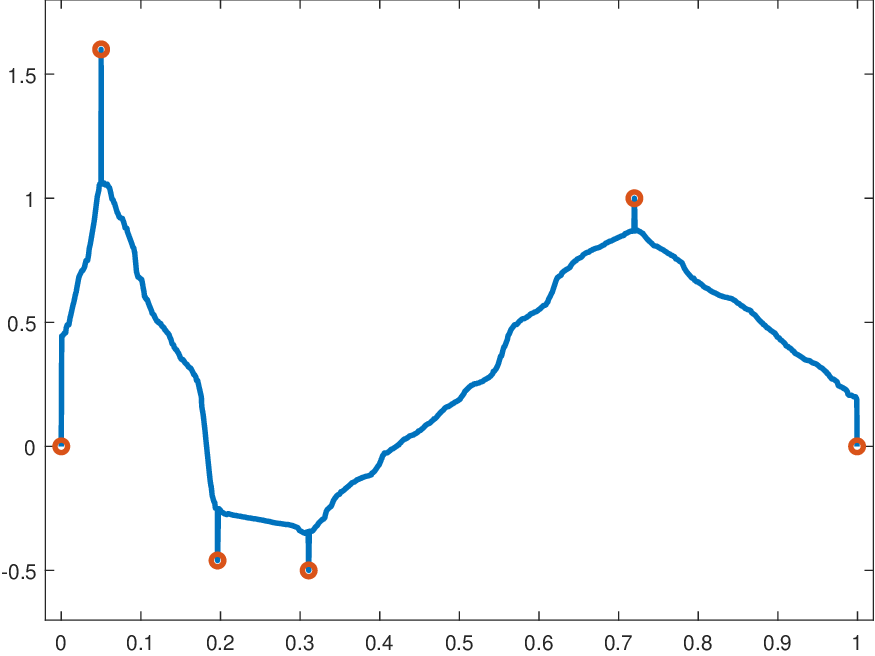} &
    \includegraphics[width=.24\textwidth]{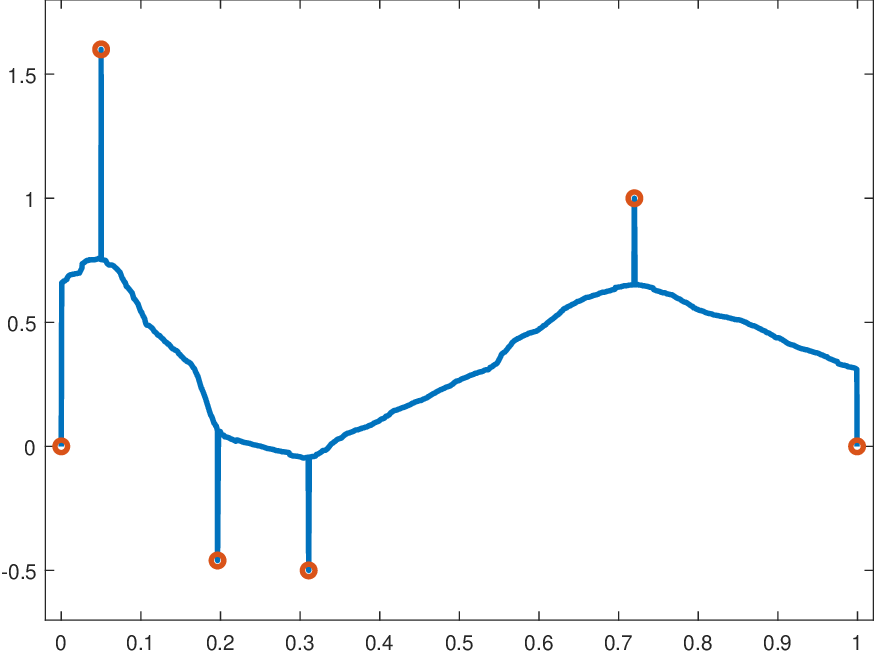} &
      \includegraphics[width=.24\textwidth]{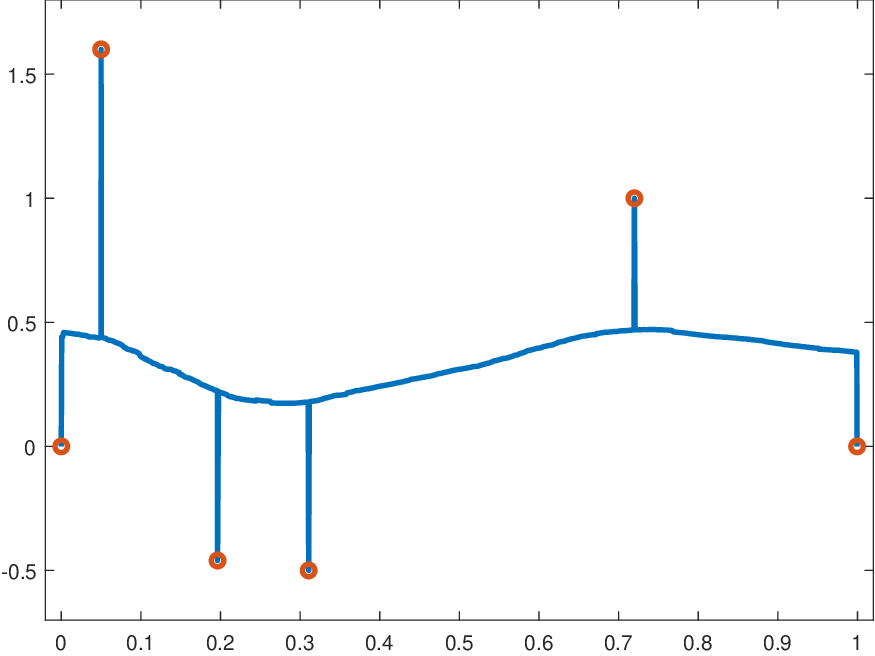} \\
      \includegraphics[width=.24\textwidth]{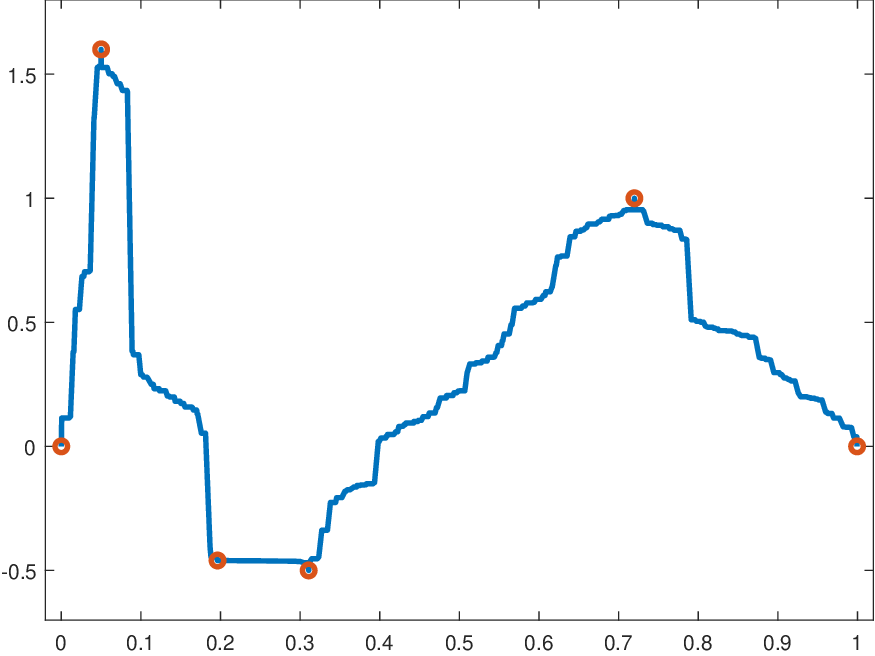} &
      \includegraphics[width=.24\textwidth]{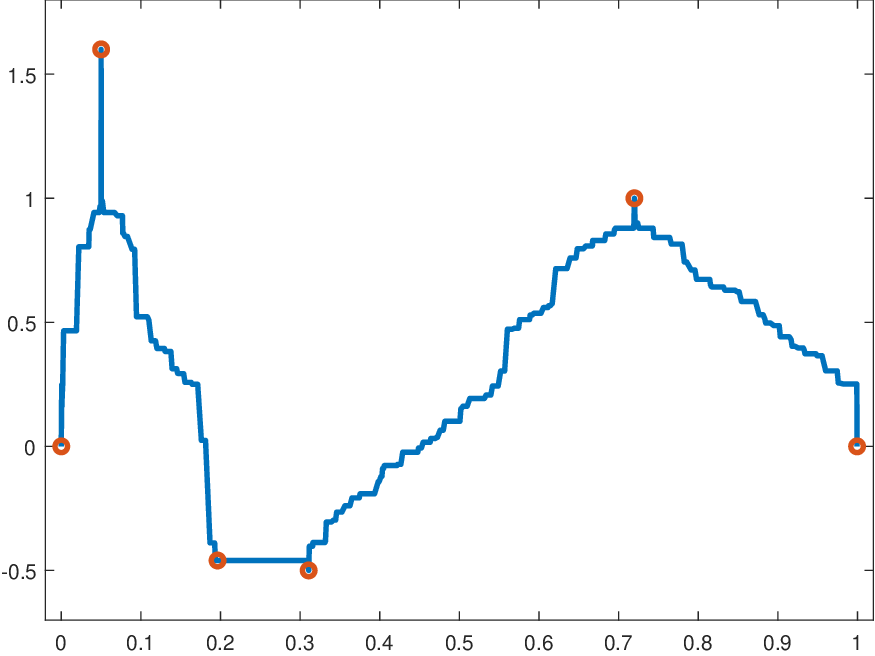} &
      \includegraphics[width=.24\textwidth]{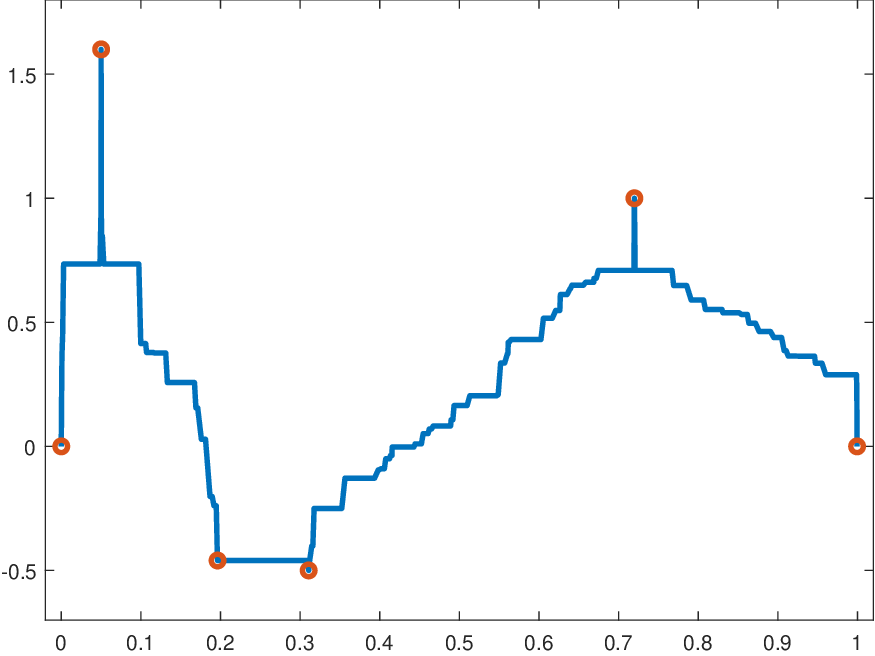} &
      \includegraphics[width=.24\textwidth]{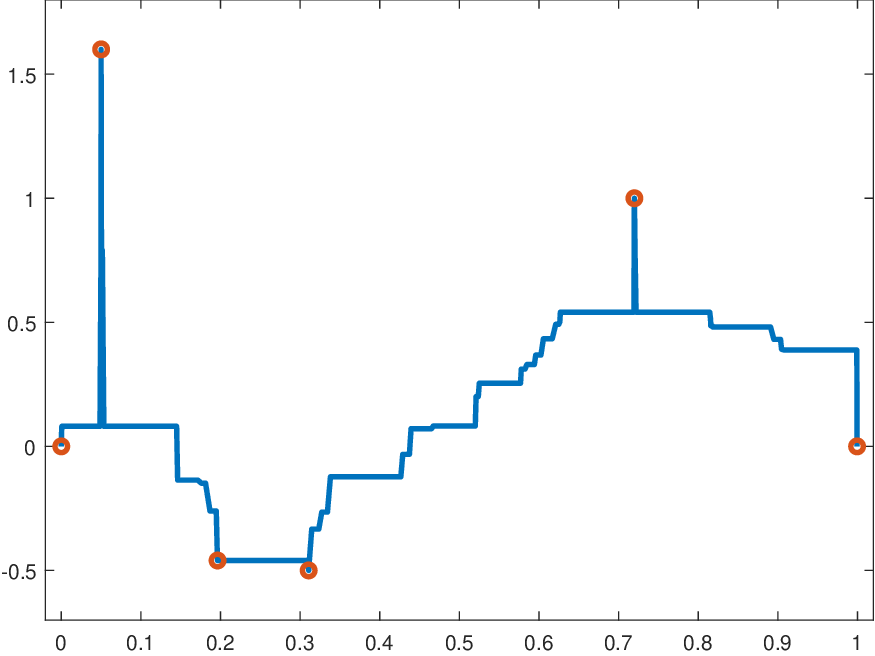}  \\
      \footnotesize\emph{$\varepsilon_n=0.006$}  &
      \footnotesize\emph{$\varepsilon_n=0.012$} &
      \footnotesize\emph{$\varepsilon_n=0.024$} &
      \footnotesize\emph{$\varepsilon_n=0.048$} \\
    \end{tabular}
    \caption{Results of GpL and HpL with $p=2$ and different $\varepsilon_n$. First row: GpL; second row: HpL.}
    \label{fig:1Deps}
  \end{figure}

  \begin{figure}[htbp]
    \centering
    \begin{tabular}{@{}c@{~}c@{~}c@{~}c}
      \includegraphics[width=.24\textwidth]{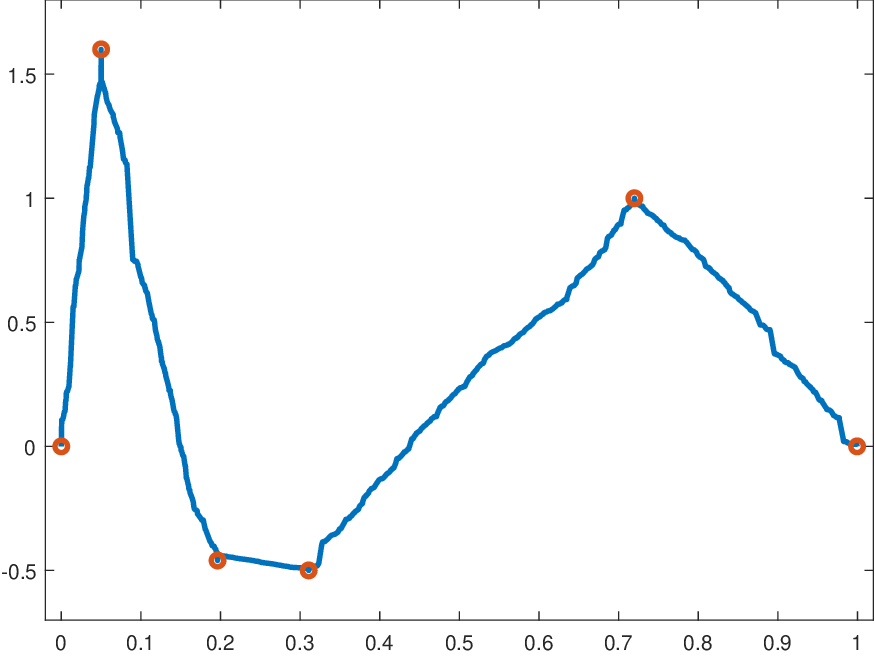} &
      \includegraphics[width=.24\textwidth]{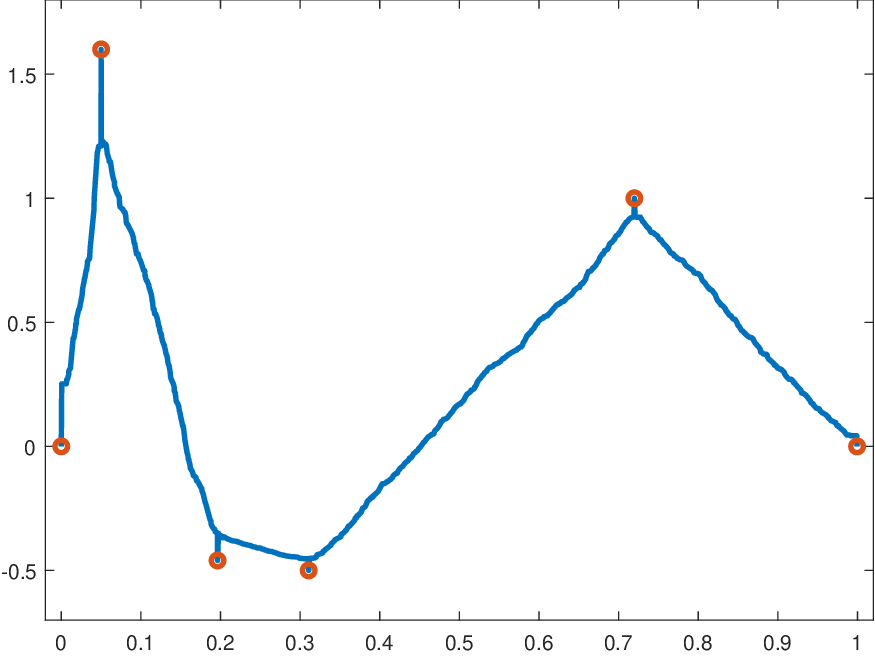} &
      \includegraphics[width=.24\textwidth]{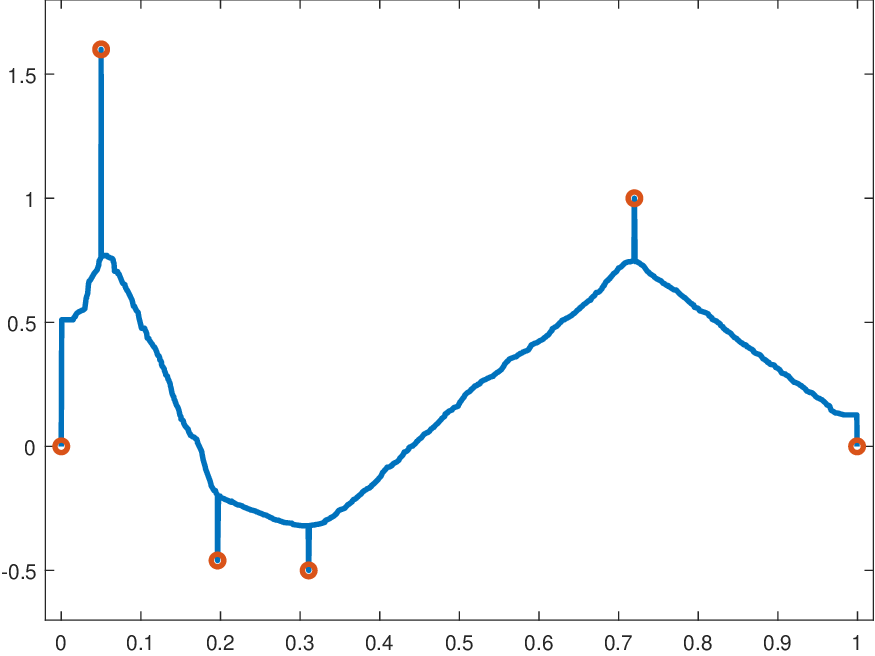} &
        \includegraphics[width=.24\textwidth]{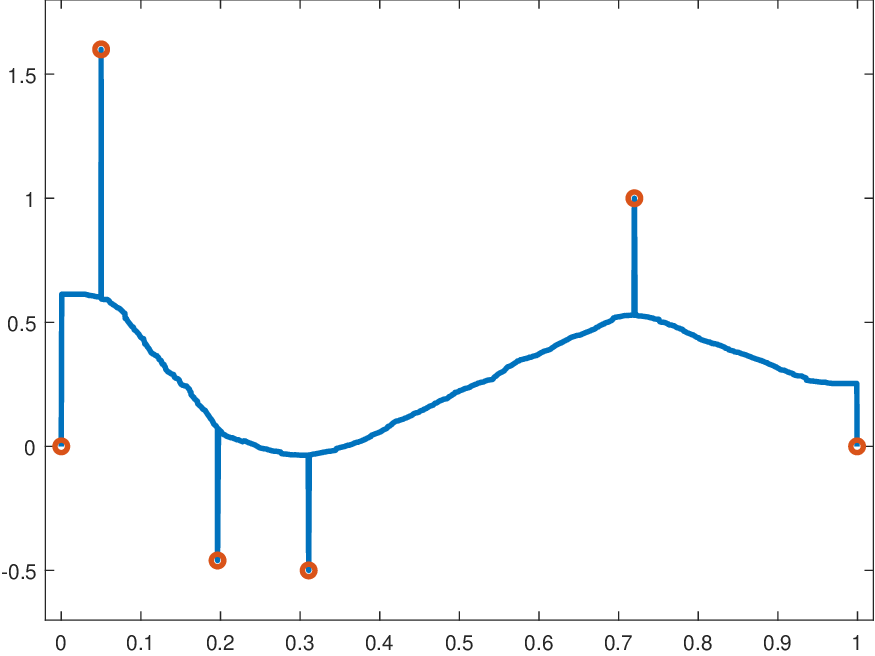} \\
        \includegraphics[width=.24\textwidth]{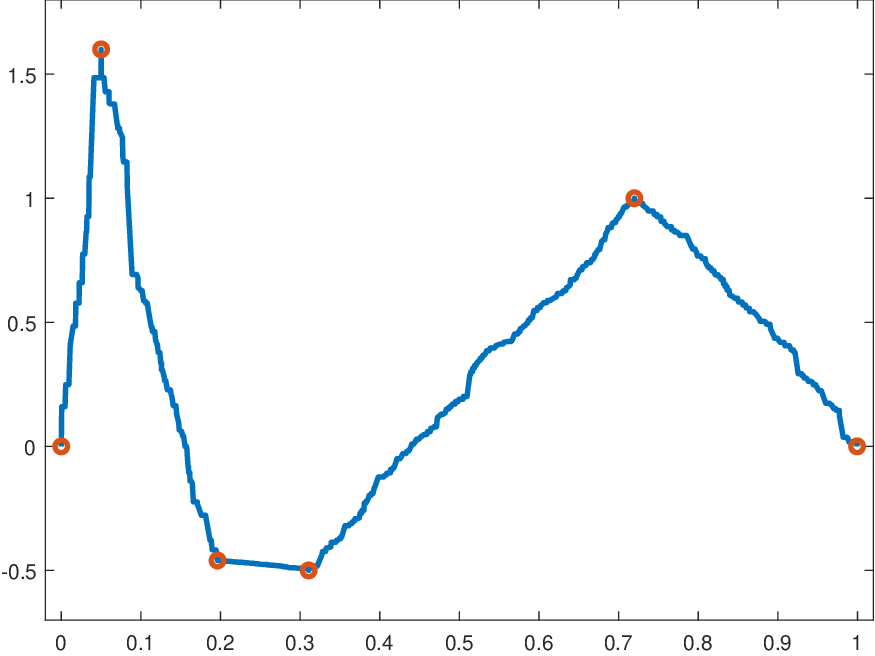} &
        \includegraphics[width=.24\textwidth]{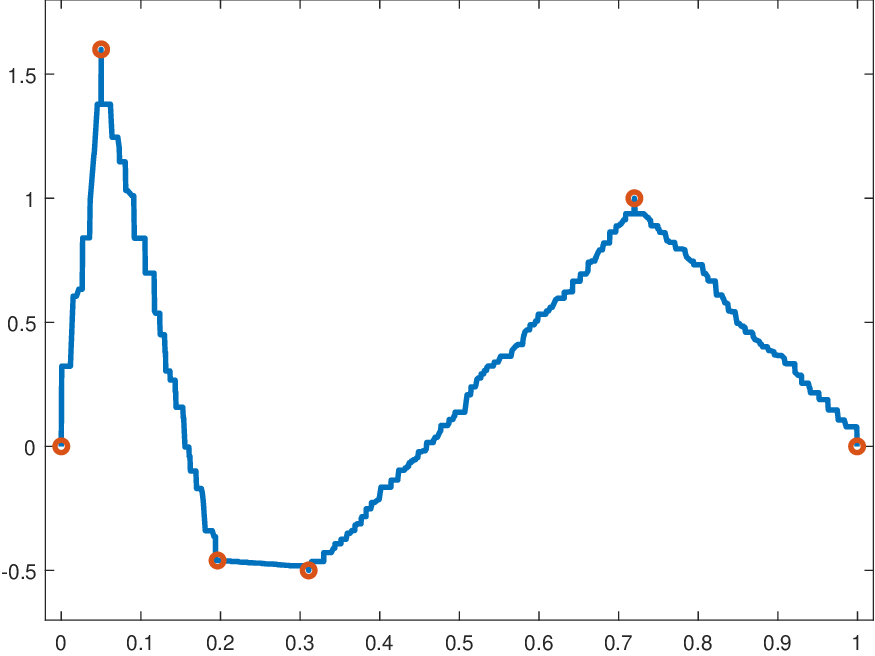} &
        \includegraphics[width=.24\textwidth]{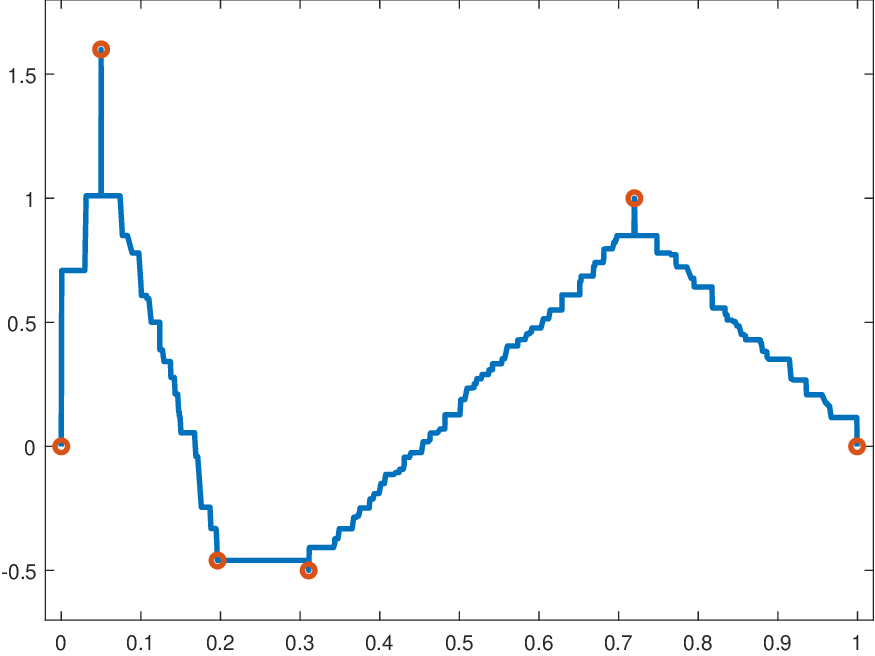} &
        \includegraphics[width=.24\textwidth]{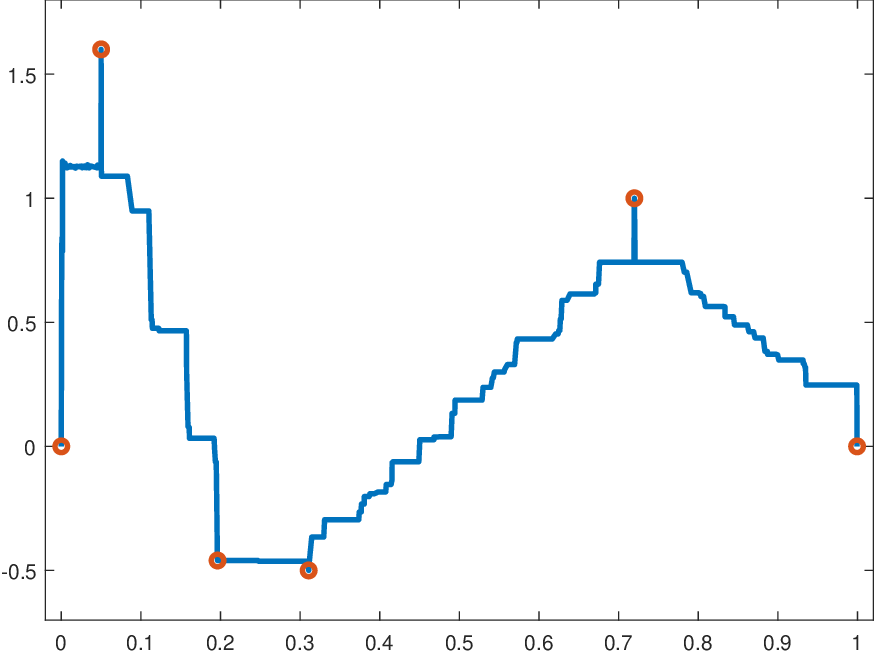}  \\
        \footnotesize\emph{$k_n=9$}  &
        \footnotesize\emph{$k_n=18$} &
        \footnotesize\emph{$k_n=36$} &
        \footnotesize\emph{$k_n=72$} \\
      \end{tabular}
      \caption{Results of GpL and HpL with $p=2$ and different $k_n$. First row: GpL; second row: HpL.}
      \label{fig:1Dk}
    \end{figure}

    \begin{figure}[htbp]
      \centering
      \begin{tabular}{@{}c@{~}c@{~}c@{~}c}
        \includegraphics[width=.24\textwidth]{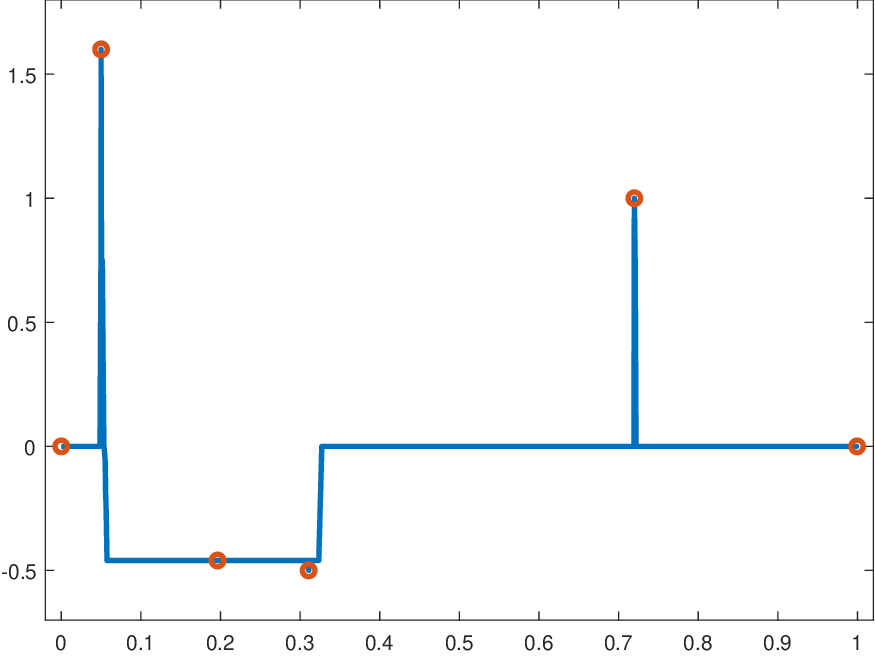} &
        \includegraphics[width=.24\textwidth]{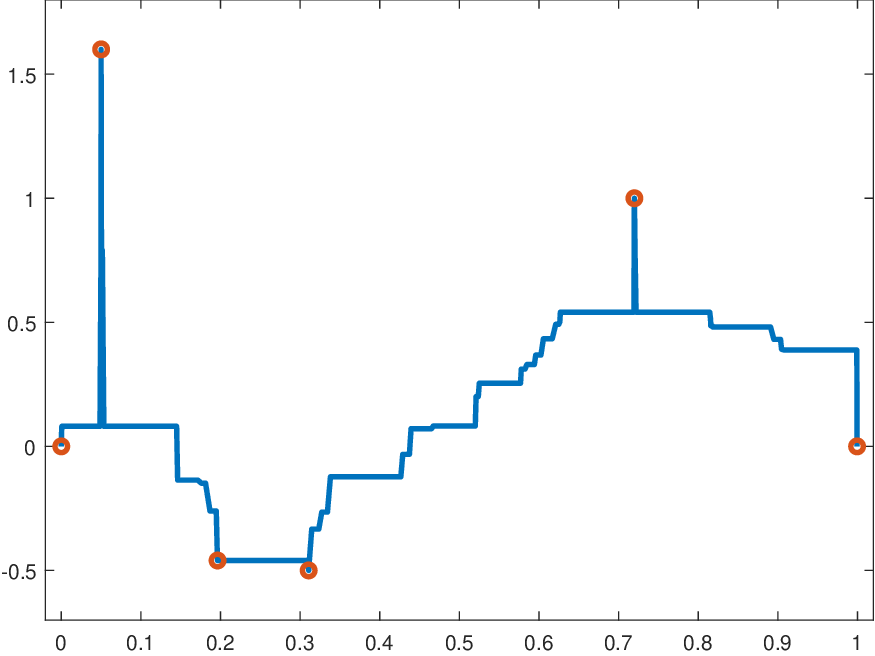} &
        \includegraphics[width=.24\textwidth]{HLp2e0.048.eps} &
          \includegraphics[width=.24\textwidth]{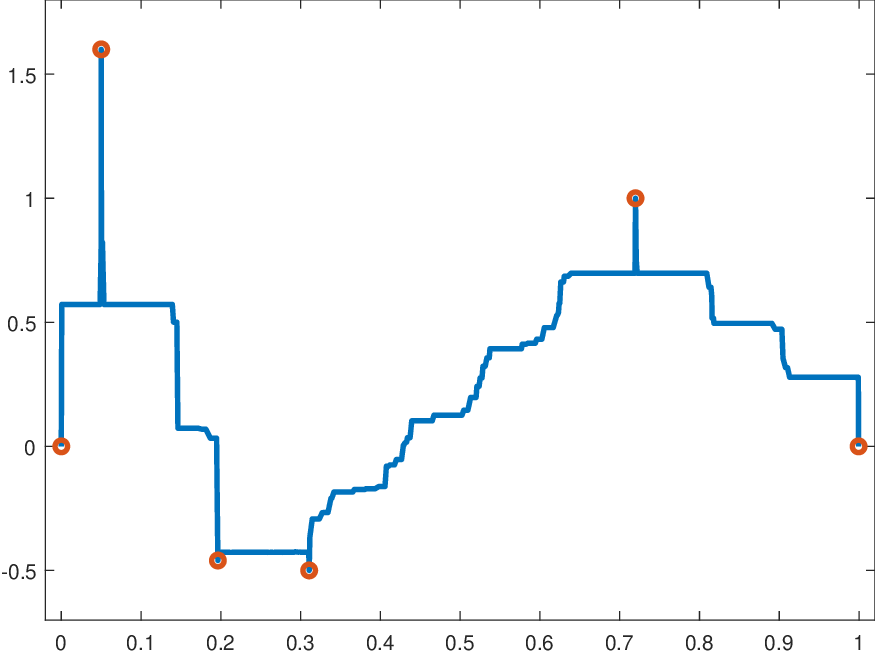} \\
          \includegraphics[width=.24\textwidth]{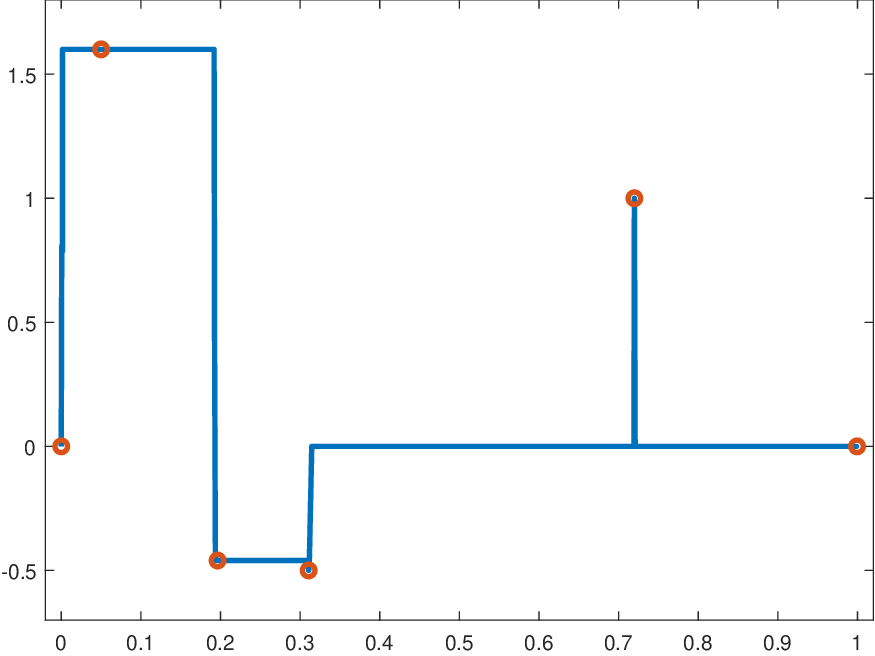} &
          \includegraphics[width=.24\textwidth]{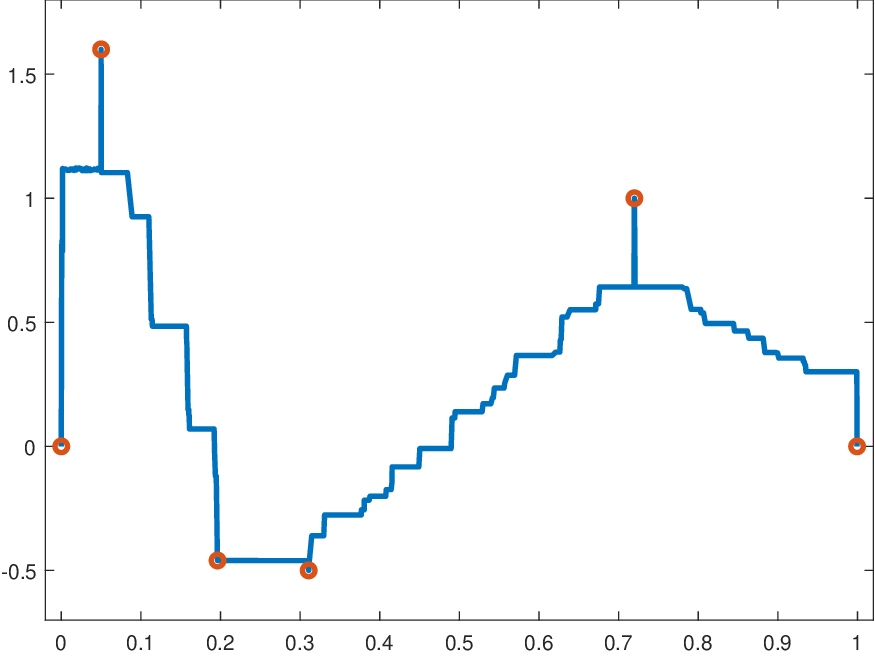} &
          \includegraphics[width=.24\textwidth]{HLp2k72.eps} &
          \includegraphics[width=.24\textwidth]{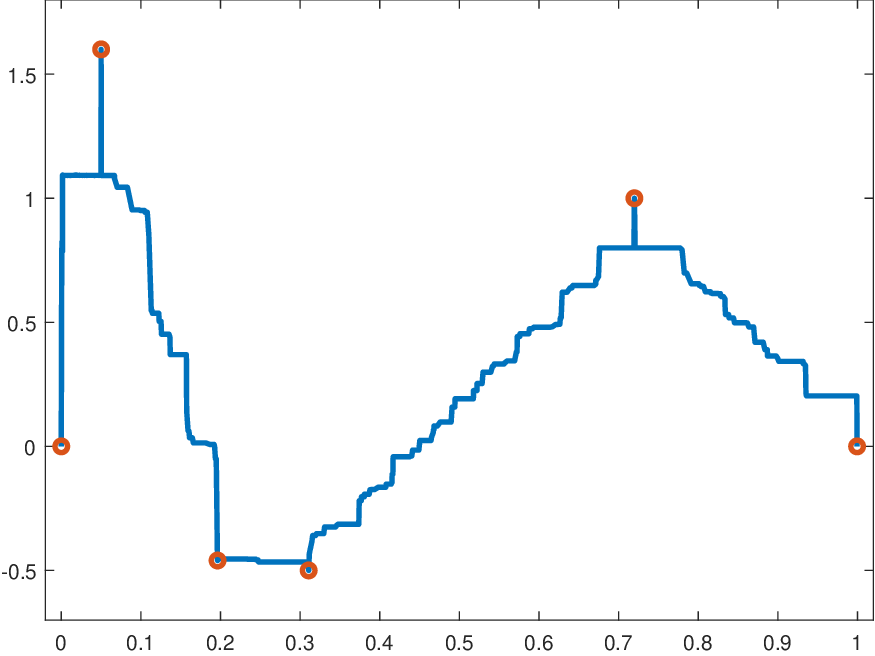}  \\
          \footnotesize\emph{$p=1$}  &
          \footnotesize\emph{$p=1.5$} &
          \footnotesize\emph{$p=2$} &
          \footnotesize\emph{$p=4$} \\
        \end{tabular}
        \caption{Results of HpL with different $p$. First row: $\varepsilon_n$-ball hypergraph with $\varepsilon_n=0.048$; second row: $k_n$-NN hypergraph with $k_n=72$.}
        \label{fig:1Dp}
      \end{figure}

At the end of this subsection, we mention that the $k_n$-NN hypergraph is more desirable in applications.
Let us go back to the first column of Figure \ref{fig:1Deps} and Figure \ref{fig:1Dk}, where the smallest $\varepsilon_n$ and $k_n$ are chosen such that the hypergraph is connected.
The error of HpL on the $k_n$-NN hypergraph is much smaller than the error on the $\varepsilon_n$-ball hypergraph.
Besides, minimizing HpL on the $k_n$-NN hypergraph requires less computational cost.
We shall utilize the $k_n$-NN hypergraph in the following experiments of SSL and image inpainting and let $p=2$ for both GpL and HpL.

\begin{figure}
  \centering
  \includegraphics[width=.4\textwidth]{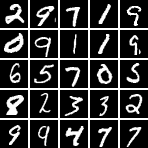}
    \caption{Some images from the MNIST dataset.}
    \label{fig:MNIST}
  \end{figure}

\subsection{Semi-supervised learning}\label{se:SSL}
We test GpL and HpL on the well-known MNIST dataset of handwritten digits \cite{lecun1998gradient}.
It contains 70000 gray-scale images, each of which is $28\times 28$ in size, representing a digit between $0-9$. Figure \ref{fig:MNIST} shows some images in the MNIST dataset.
Following \cite{zhu2003semi}, we regard each image as a point in $\mathbb{R}^{28\times28}$.
The details of SSL with GpL/HpL for the MNIST dataset are summarized in Algorithm \ref{alg3}.

\begin{algorithm}
  \caption{SSL with GpL/HpL for the MNIST dataset.}
  \label{alg3}
  \begin{algorithmic}
  \REQUIRE The MNIST dataset $\Omega_n$, the labeled set $\mathcal{O}$ with labels $u(x_i)=y_i$, $x_i\in\mathcal{O}$, $y_i\in\{0,1,\cdots,9\}$.
  \FOR { $k=0:9$}
  \STATE Find the constraint: For any $x_i\in\mathcal{O}$,
  \begin{align}\label{eq:alg3}
    u_k(x_i)=
    \begin{cases}
      1, & \mbox{if } y_i=k, \\
      0, & \mbox{otherwise}.
    \end{cases}
  \end{align}
  \STATE Solve $u_k$ on $\Omega_n$ by GpL/HpL with constraint \eqref{eq:alg3}.
  \ENDFOR
  \STATE Label points in $\Omega_n\backslash\mathcal{O}$: For any $x_i\in\Omega_n\backslash\mathcal{O}$,
  \begin{equation*}
    l=\arg\max_{0\leq k\leq9}u_k(x_i),\quad u(x_i)=l.
  \end{equation*}
  \RETURN $u$.
  \end{algorithmic}
\end{algorithm}


\begin{table}[htbp]
  \setlength\tabcolsep{3.5pt}
  \centering
  \caption{Mean accuracy (\%) and standard deviation of GpL and HpL for the classification of the MNIST dataset with different labeling rates.}
    \begin{tabular}{llllllll}
    \toprule
     & 0.05\%   & 0.1\% & 0.2\% & 0.5\% & 1\%  & 2\%  & 5\% \\\midrule
    GpL & 16.2$\pm$7.5 & 32.1$\pm$13.3 & 56.6$\pm$13.4 & 86.1$\pm$3.7 & 92.4$\pm$0.9 & 93.9$\pm$0.4 & 95.0$\pm$0.2 \\
    HpL & 50.2$\pm$8.7 & 69.8$\pm$6.2 & 80.7$\pm$5.0 & 87.9$\pm$2.6 & 91.6$\pm$0.7 & 92.9$\pm$0.4 & 94.4$\pm$0.2 \\
    \bottomrule
    \end{tabular}%
  \label{tab:SSL}%
\end{table}%

For both GpL and HpL, we choose $k_n=21$ and the weight function
\begin{equation*}
  w_{i,j}=\exp\left(-\frac{\|x_i-x_j\|^2}{\sigma(x_i)^2}\right),
\end{equation*}
where $\sigma(x_i)$ is the distance between $x_i$ and its the 21th nearest neighbor.
This is calculated by the vlfeat toolbox \cite{vedaldi2010vlfeat}.

Table \ref{tab:SSL} presents the classification accuracy of GpL and HpL with $0.05\%-5\%$ randomly chosen labeled points. We repeat the experiment 20 times and show the mean accuracy and the standard deviation.
When the labeling rate is extremely low ($\leq 0.1\%$), GpL fails to give meaningful results. HpL has a significantly better performance, although it is not satisfactorily accurate.
This is mainly attributed to HpL's ability of better inhibiting spikes.
However, as the labeling rate increases, the gap between the two algorithms becomes smaller.
GpL even has a higher mean accuracy of $1\%$ than HpL in the case of $2\%$ labeling rate.

\begin{algorithm}[t]
  \caption{Image inpainting with GpL/HpL.}
  \label{alg4}
  \begin{algorithmic}
  \REQUIRE Partially observed pixels of an image $f$: $\{f_{i,j},(i,j)\in X\}$; an initial guess $u^{(0)}$.
  \STATE Initialization: $k=0$.
  \FOR { $k=0:K-1$}
  \STATE Construct point cloud $\Omega_n(u^{(k)})$.
  \STATE Construct the training set $\mathcal{O}(u^{(k)})=\{P_{i,j}\in\Omega_n(u^{(k)}), (i,j)\in X\}$ and $u^{(k+1)}(P_{i,j})=f_{i,j}$ for any $P_{i,j}\in \mathcal{O}(u^{(k)})$.
  \STATE Solve $u^{(k+1)}$ on $\Omega_n(u^{(k)})$ by GpL/HpL with the given training set.
  \ENDFOR
  \RETURN $u^{(K-1)}$.
  \end{algorithmic}
\end{algorithm}

\begin{figure}
  \centering
  \begin{tabular}{@{}c@{~}c@{~}c@{~}c@{~}c@{~}c@{}}
    \includegraphics[width=.15\textwidth]{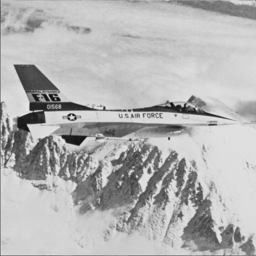} &
    \includegraphics[width=.15\textwidth]{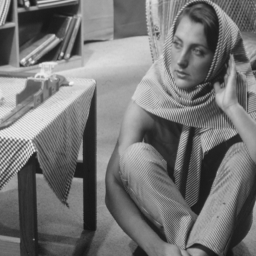} &
    \includegraphics[width=.15\textwidth]{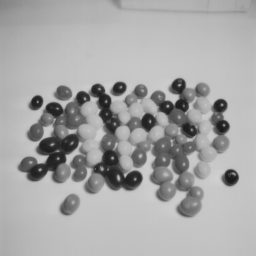} &
    \includegraphics[width=.15\textwidth]{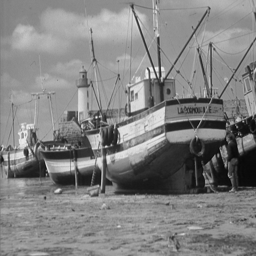} &
    \includegraphics[width=.15\textwidth]{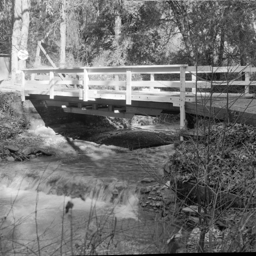} &
    \includegraphics[width=.15\textwidth]{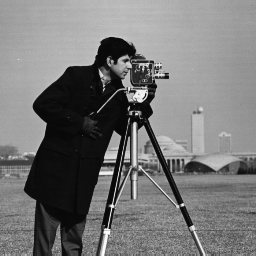} \\
    \footnotesize\emph{Airplane} & \footnotesize\emph{Barbara}  & \footnotesize\emph{Beans}  & \footnotesize\emph{Boat}  & \footnotesize\emph{Bridge}  & \footnotesize\emph{Cameraman} \\
    \includegraphics[width=.15\textwidth]{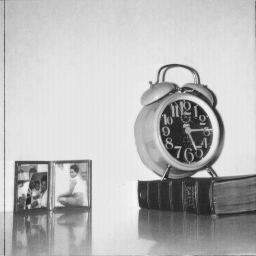} &
    \includegraphics[width=.15\textwidth]{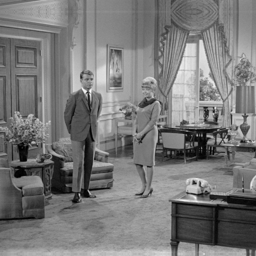} &
    \includegraphics[width=.15\textwidth]{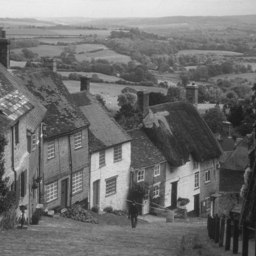} &
    \includegraphics[width=.15\textwidth]{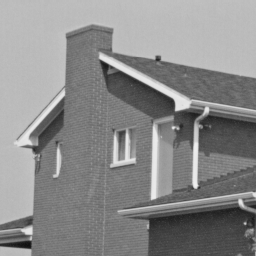} &
    \includegraphics[width=.15\textwidth]{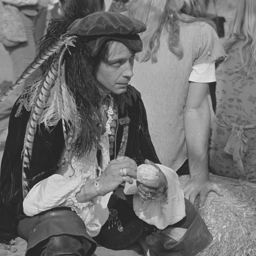} &
    \includegraphics[width=.15\textwidth]{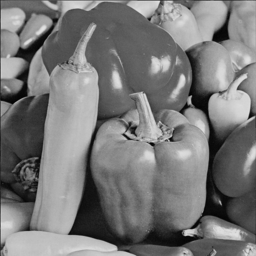} \\
    \footnotesize\emph{Clock} & \footnotesize\emph{Couple} & \footnotesize\emph{Hill} & \footnotesize\emph{House} & \footnotesize\emph{Man} & \footnotesize\emph{Peppers} \\
  \end{tabular}
    \caption{Test images with size $256\times256$ for image inpainting.}
    \label{fig:testimage}
  \end{figure}

\subsection{Image inpainting}
In this subsection, we consider image inpainting from sparse data.
We follow \cite{shi2017weighted} to construct a point cloud from an image.
More precisely, let $f\in \mathbb{R}^{N_1\times N_2}$ be a gray-scale image.
We define image patch $P_{i,j}(f)$ to be a rectangle centered at pixel $(i,j)$ of size $s_1\times s_2$.
A mirror extension of $f$ is needed for pixels that are near the boundary.
Now the point cloud genarated by $f$ is defined as
\begin{equation*}
  \Omega_n(f)=\left\{P_{i,j}(f): 1\leq i\leq N_1, 1\leq j\leq N_2\right\}\subset \mathbb{R}^{s_1\times s_2}.
\end{equation*}
The image patch $P_{i,j}(f)$ and the intensity $f_{i,j}$ of $f$ at pixel $(i,j)$ can be connected via a function $u$ defined on $\Omega_n(f)$, i.e.,
\begin{equation*}
  u(P_{i,j}(f))=f_{i,j}.
\end{equation*}
Then if $\Omega_n(f)$ is known and $\mathcal{O}(f)\subset \Omega_n(f)$, image inpainting with given $u$ on $\mathcal{O}(f)$ becomes a data interpolation problem in $\mathbb{R}^{s_1\times s_2}$.
In practice, the point cloud is unknown. For this reason, we start with an initial guess of $f$ and update the point cloud with the restored image to reduce the error.
The details of image inpainting with GpL and HpL are summarized in Algorithm \ref{alg4}.

\begin{table}
  \setlength\tabcolsep{4.0pt}
  \centering
  \caption{The PSNR and SSIM values of results of GpL and HpL for different test images and different sampling rates.}
    \begin{tabular}{llllllllll}
    \toprule
          &       & \multicolumn{4}{l}{PSNR}      & \multicolumn{4}{l}{SSIM} \bigstrut\\
    \hline
          &       & 5\%   & 10\%  & 15\%  & 20\%  & 5\%   & 10\%  & 15\%  & 20\% \bigstrut\\
    \hline
    Airplane & GpL   & 21.03 & 22.65 & 23.93 & 24.58 & 0.3246 & 0.4516 & 0.5499 & 0.6120 \bigstrut[t]\\
          & HpL   & 21.33 & 23.09 & 24.55 & 25.24 & 0.3825 & 0.5125 & 0.6075 & 0.6618 \\\hline
    Barbara & GpL   & 22.55 & 24.60 & 25.99 & 27.03 & 0.4436 & 0.5767 & 0.6571 & 0.7142 \\
          & HpL   & 23.15 & 25.51 & 27.00 & 27.95 & 0.5043 & 0.6354 & 0.7062 & 0.7553 \\\hline
    Beans & GpL   & 22.66 & 25.12 & 26.35 & 28.26 & 0.4486 & 0.5435 & 0.6097 & 0.6620 \\
          & HpL   & 23.20 & 26.18 & 27.52 & 29.64 & 0.4911 & 0.5877 & 0.6464 & 0.6914 \\\hline
    Boat  & GpL   & 21.96 & 23.14 & 24.18 & 25.21 & 0.3199 & 0.4407 & 0.5408 & 0.6115 \\
          & HpL   & 22.19 & 23.51 & 24.60 & 25.81 & 0.3704 & 0.5000 & 0.5915 & 0.6591 \\\hline
    Bridge & GpL   & 20.31 & 21.69 & 22.50 & 23.24 & 0.2790 & 0.4040 & 0.4874 & 0.5534 \\
          & HpL   & 20.41 & 21.92 & 22.74 & 23.53 & 0.3203 & 0.4496 & 0.5268 & 0.5906 \\\hline
    C.man & GpL   & 20.88 & 22.01 & 23.43 & 24.02 & 0.2257 & 0.3434 & 0.4331 & 0.4943 \\
          & HpL   & 21.07 & 22.31 & 23.85 & 24.53 & 0.2621 & 0.3776 & 0.4688 & 0.5246 \\\hline
    Clock & GpL   & 22.62 & 24.20 & 25.98 & 27.08 & 0.3061 & 0.4256 & 0.5285 & 0.5880 \\
          & HpL   & 22.92 & 24.72 & 26.65 & 27.70 & 0.3374 & 0.4667 & 0.5692 & 0.6227 \\\hline
    Couple & GpL   & 21.55 & 23.35 & 24.32 & 25.38 & 0.2936 & 0.4653 & 0.5486 & 0.6351 \\
          & HpL   & 21.76 & 23.80 & 24.88 & 26.00 & 0.3532 & 0.5302 & 0.6074 & 0.6871 \\\hline
    Hill  & GpL   & 23.43 & 25.28 & 26.33 & 27.06 & 0.3234 & 0.4818 & 0.5671 & 0.6264 \\
          & HpL   & 23.81 & 25.98 & 26.95 & 27.76 & 0.3804 & 0.5425 & 0.6196 & 0.6730 \\\hline
    House & GpL   & 24.55 & 26.99 & 28.76 & 30.19 & 0.2636 & 0.3895 & 0.4616 & 0.5276 \\
          & HpL   & 25.01 & 27.70 & 29.80 & 31.18 & 0.2981 & 0.4217 & 0.4902 & 0.5505 \\\hline
    Man   & GpL   & 22.32 & 24.07 & 25.01 & 25.93 & 0.3156 & 0.4506 & 0.5385 & 0.6034 \\
          & HpL   & 22.75 & 24.66 & 25.71 & 26.61 & 0.3772 & 0.5156 & 0.5971 & 0.6501 \\\hline
    Peppers & GpL   & 22.33 & 24.46 & 25.70 & 27.05 & 0.4696 & 0.5956 & 0.6686 & 0.7263 \\
          & HpL   & 23.05 & 25.50 & 26.75 & 28.14 & 0.5357 & 0.6614 & 0.7242 & 0.7701 \\\hline
    Average  & GpL   & 22.18 & 23.96 & 25.21 & 26.25 & 0.3344 & 0.4640 & 0.5493 & 0.6129 \\
          & HpL   & 22.56 & 24.57 & 25.92 & 27.01 & 0.3844 & 0.5167 & 0.5962 & 0.6530 \bigstrut[b]\\
    \bottomrule
    \end{tabular}%
  \label{tab:PSNR}%
\end{table}%

To calculate the weight function for GpL and HpL, we define the semilocal patch
\begin{equation*}
  \bar{P}_{i,j}(f)=[P_{i,j}(f), \lambda\bar{x}],
\end{equation*}
with
\begin{equation*}
  \bar{x}=\left(\frac{i}{N_1}, \frac{j}{N_2}\right),
\end{equation*}
that includes the local coordinate.
This restricts the search of the $k_n$ nearest neighbors to a local area and accelerates the algorithm.
Then the weight function is calculated as in the case of Section \ref{se:SSL}.
We utilize $s_1=s_2=11$ and $\lambda=10$ for the experiments.
The iteration number $K$ in Algorithm \ref{alg4} is chosen to be $15$ for GpL. While for HpL, we let the result of GpL be the initial guess $u^{(0)}$ and let $K=3$.

The test images are shown in Figure \ref{fig:testimage}.
In Table \ref{tab:PSNR}, we list the PSNR and SSIM values \cite{wang2004image} of the results of GpL and HpL with sampling rates $5\%$, $10\%$, $15\%$, and $20\%$.
HpL outperforms GpL for all test images and all sampling rates.
From Figure \ref{fig:inpainting}, it can be seen that HpL causes fewer spikes than GpL.
This is consistent with previous experimental results.

\begin{figure}[t]
  \centering
  \begin{tabular}{@{~}c@{~}c@{~}c@{~}c@{}}
  \begin{overpic}[width=.23\textwidth]{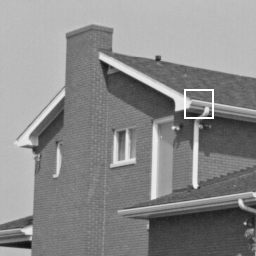}\put(50,0){\includegraphics[width=.115\textwidth]{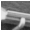}}\end{overpic}
  \begin{overpic}[width=.23\textwidth]{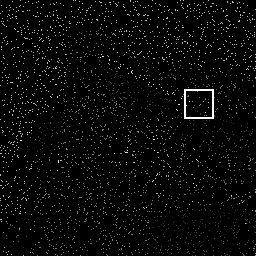}\put(50,0){\includegraphics[width=.115\textwidth]{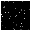}}\end{overpic}
  \begin{overpic}[width=.23\textwidth]{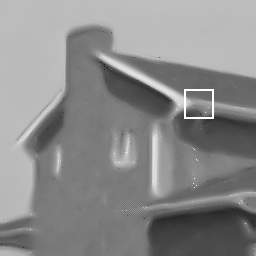}\put(50,0){\includegraphics[width=.115\textwidth]{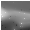}}\end{overpic}
  \begin{overpic}[width=.23\textwidth]{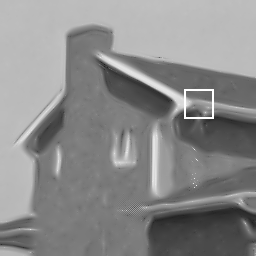}\put(50,0){\includegraphics[width=.115\textwidth]{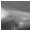}}\end{overpic}\\
  \begin{overpic}[width=.23\textwidth]{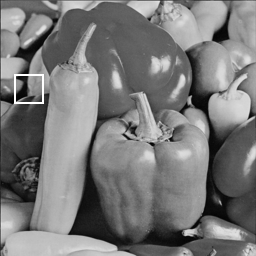}\put(50,0){\includegraphics[width=.115\textwidth]{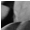}}\end{overpic}
  \begin{overpic}[width=.23\textwidth]{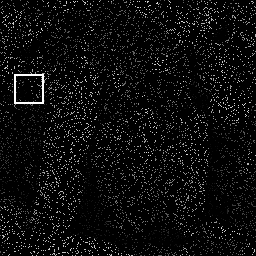}\put(50,0){\includegraphics[width=.115\textwidth]{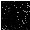}}\end{overpic}
  \begin{overpic}[width=.23\textwidth]{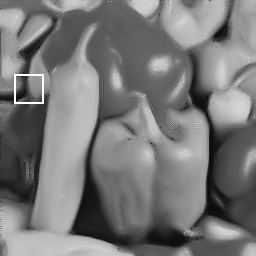}\put(50,0){\includegraphics[width=.115\textwidth]{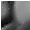}}\end{overpic}
  \begin{overpic}[width=.23\textwidth]{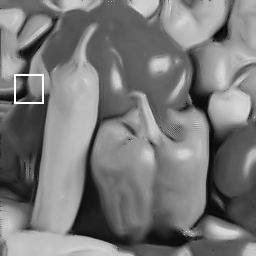}\put(50,0){\includegraphics[width=.115\textwidth]{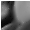}}\end{overpic}\\
  \begin{overpic}[width=.23\textwidth]{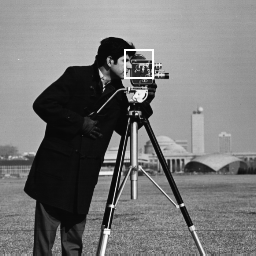}\put(50,0){\includegraphics[width=.115\textwidth]{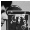}}\end{overpic}
  \begin{overpic}[width=.23\textwidth]{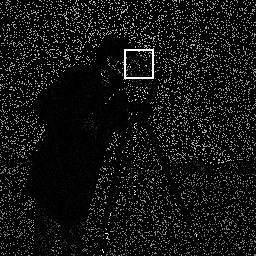}\put(50,0){\includegraphics[width=.115\textwidth]{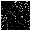}}\end{overpic}
  \begin{overpic}[width=.23\textwidth]{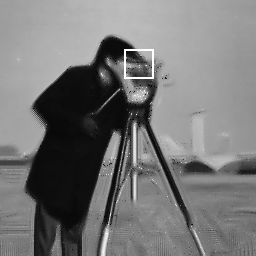}\put(50,0){\includegraphics[width=.115\textwidth]{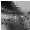}}\end{overpic}
  \begin{overpic}[width=.23\textwidth]{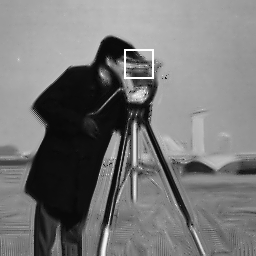}\put(50,0){\includegraphics[width=.115\textwidth]{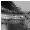}}\end{overpic}\\
  \begin{overpic}[width=.23\textwidth]{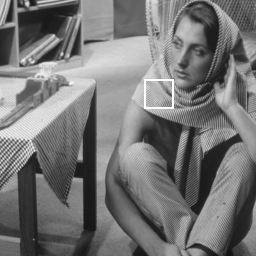}\put(50,0){\includegraphics[width=.115\textwidth]{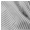}}\end{overpic}
  \begin{overpic}[width=.23\textwidth]{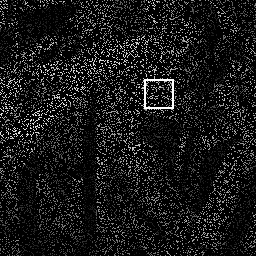}\put(50,0){\includegraphics[width=.115\textwidth]{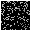}}\end{overpic}
  \begin{overpic}[width=.23\textwidth]{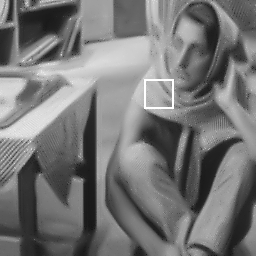}\put(50,0){\includegraphics[width=.115\textwidth]{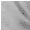}}\end{overpic}
  \begin{overpic}[width=.23\textwidth]{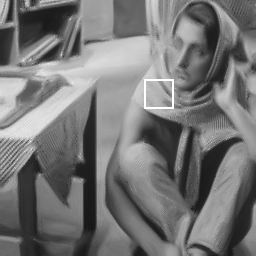}\put(50,0){\includegraphics[width=.115\textwidth]{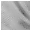}}\end{overpic}\\
  \end{tabular}
    \caption{The restored results of GpL and HpL for four test images.
   From left to right: The test image, The observed pixels, GpL, and HpL.
   From top to bottom (image/sampling rate): House/5\%, Peppers/10\%, Cameraman/15\%, Barbara/20\%.
  }
    \label{fig:inpainting}
  \end{figure}

A main drawback of the new hypergraph $p$-Laplacian regularization is the high computational cost.
In our definition, the hyperedges have the same number as the vertices, resulting in the objective function being a large-scale non-smooth function.
There exists no simple algorithm for solving it, even when $p=2$.
In the experiment of image inpainting, we use the result of GpL as the initial guess for HpL, which saves a lot of computational time.
Nevertheless,
working with MATLAB 2020b on a desktop equipped with an Intel Core i7 3.20 GHz CPU,
the average running time of GpL/HpL is about 165/262 seconds.

\section{Conclusion and future work}
In this paper, we defined the $\varepsilon_n$-ball hypergraph and the $k_n$-nearest neighbor hypergraph from a point cloud and studied the $p$-Laplacian regularization on both hypergraphs in a semisupervised setting.
It was shown that the hypergraph $p$-Laplacian regularization is variational consistent with the continuum  $p$-Laplacian regularization.
Compared to the graph $p$-Laplacian regularization, the new hypergraph regularization was proven both theoretically and numerically to be more effective in suppressing spiky solutions.

Some parts of the numerical results are still not well understood. For example, the staircasing behavior has been observed in experiments in 1D.
The hypergraph model loses its competitiveness in semi-supervised learning with large labeling rates.
Both phenomena are worth further exploring.
Besides, developing new algorithms that require less computational cost is also one of future directions.

 \section*{Acknowledgments}
The authors would like to thank the referee for the valuable comments and suggestions.
KS is supported by China Scholarship Council.
The authors acknowledge support from DESY (Hamburg, Germany), a member of the Helmholtz Association HGF.









%
%

\bibliographystyle{unsrt}
\bibliography{references}

\end{document}